\numberwithin{equation}{section}
\newcommand{\Z}{\mathbb{Z}}
\newcommand{\barr}{\begin{array}}
\newcommand{\earr}{\end{array}}
\newcommand{\beq}{\begin{equation}}
\newcommand{\eeq}{\end{equation}}
\newcommand{\bea}{\begin{eqnarray}}
\newcommand{\eea}{\end{eqnarray}}
\newtheorem{theorem}{\sc theorem}[section]
\newtheorem{proposition}[theorem]{\sc Proposition}
\newtheorem{lemma}[theorem]{\sc Lemma}
\newtheorem{eg}[theorem]{\sc Example}
\newtheorem{definition}[theorem]{\sc Definition}
\newtheorem{remark}[theorem]{\sc Remark}
\newtheorem{corollary}[theorem]{\sc Corollary}
\newcommand{\rcsp}{{\mathbb D}_E}
\newcommand{\rcsptwo}{{{\mathbb D}_{\mathbb{C}}}}
\newcommand{\rcspfour}{{\mathbb D}_{\mathbb{C}^2}}
\newcommand{\prcsp}{\rcsp \times T}
\newcommand{\hexpo}{{\mathcal O}^{\operatorname{exp}}_{\hat{X}}}
\newcommand{\hexpoy}{{\mathcal O}^{\operatorname{exp}}_{\hat{Y}}}
\newcommand{\bexpm}{{\mathcal B}^{\operatorname{exp}}_{\overline{M}}}
\newcommand{\oexp}{{\mathcal O}^{\operatorname{exp}}_{\rcsp}}
\newcommand{\boexp}{{\mathcal B}{\mathcal{O}}^{\operatorname{exp}}_{\overline{N}}}
\newcommand{\pboexp}{\mathscr{H}^{n}_{\partial N}(\hexpo)}
\newcommand{\sspace}{E^{\times}/\mathbb{R}_+}
\newcommand{\wtO}{\widetilde{O}}
 \def\dedicatory#1{\def\@dedicatory{#1}}
\def\@maketitle{
 \begin{center}
 {\Large\bf Laplace hyperfunctions in several variables \par}
\vspace{5mm}
{\large \@author \par}
\vspace{10mm}
{ \@dedicatory \par}
\end{center}
 \par\vskip 10mm
 }
\title
 \date{}
 \author{Naofumi Honda\,
 \thanks{Department of Mathematics, Hokkaido University, Sapporo 060-0810, Japan,
Supported in part by JSPS Grant-in-Aid for Scientific Research (C) 
(15K04887).
 } 
and\, Kohei Umeta \thanks{Department of Mathematics, Hokkaido University, Sapporo 060-0810, Japan.}}
\dedicatory{ Dedicated to Professor H.~Komatsu on his Sanju \\- eighty years birthday in the traditional Japanese counting.}
\begin{document}
\maketitle

\begin{abstract}
We establish  an edge of the wedge theorem for the sheaf of holomorphic functions with exponential growth at infinity and construct the sheaf of Laplace hyperfunctions in several variables.
We also study the fundamental properties  of the sheaf of Laplace hyperfunctions.
\end{abstract}
\section{Introduction}

In the 1980's, H.~Komatsu (\cite{h1}-\cite{h6}) introduced a new class of hyperfunctions in one variable called Laplace hyperfunctions in order to consider the Laplace transform of  a hyperfunction.
By the theory of Laplace hyperfunctions, he gave a new foundation of  Heaviside's theory on a wider class of functions.
A  Laplace hyperfunction in one variable is defined by a difference of boundary values of  holomorphic functions of exponential type along the real axis. Recently, in the paper \cite{hu}, the authors established a vanishing theorem of cohomology 
groups on a Stein open subset
with values in the sheaf of holomorphic functions of exponential type.
As a consequence, we had succeeded in localizing the notion of one dimensional 
Laplace hyperfunctions, that is, 
we constructed the sheaf of Laplace hyperfunctions in one variable (see. \cite{hu}).

\

The aim of our paper is to construct  the sheaf of Laplace hyperfunctions 
in several variables and study its fundamental properties.
For that purpose,  we establish an edge of the wedge type theorem 
for holomorphic functions of exponential type. 
Namely, we show  pure $n$-codimensionality of the partial radial compactification  $\overline{\mathbb{R}^{n}} \times \mathbb{C}^{m}$ of  
 $\mathbb{R}^n \times \mathbb{C}^{m}$ relative
to the sheaf $\hexpo$ of holomorphic functions of exponential type, and
it is the most crucial step for construction of the sheaf $\bexpm$ 
of Laplace hyperfunctions. 
This kind of a theorem, i.e., an edge of the wedge 
theorem for holomorphic functions with bounds,
was first established by T.~Kawai in \cite{k},
where he had shown pure $n$-codimensionality of some compactification of $\mathbb{R}^n$
for the sheaf of holomorphic functions with infra-exponential growth and then constructed
the sheaf of Fourier hyperfunctions. He had effectively used, in his proof, 
a duality theorem between complexes of locally convex topological vector spaces.
After his success, the method employed there becomes very common and develops
in showing pure codimensionality relative to several sheaves of holomorphic 
functions with bounds, see Y.~Saburi (\cite{s}).

The space of holomorphic functions of exponential type appearing in our study
has also a locally convex topology of a vector space, which is defined by
a projective limit of a sequence of dual Fr\'echet spaces. The morphisms
in the sequence are, however, neither homomorphisms onto their images nor compact,
and hence, the resulting topology of a vector space becomes very complicated
like the one for the space of real analytic functions 
on an open subset in $\mathbb{R}^n$. Therefore we cannot apply the topological method
mentioned above directly to the case studied in our paper. To overcome this difficulty,
we adopt an algebraic method based on a Martineau type theorem
(Theorem \ref{martineau}) and obtain the result after some algebraic computations. 
At the same time, we also show 
pure $n$-codimensionality of the boundary 
$\partial\mathbb{R}^{n} \times \mathbb{C}^{m}
:= (\overline{\mathbb{R}^{n}}\setminus \mathbb{R}^n) \times \mathbb{C}^{m}$ 
of the radial compactification relative to $\hexpo$, from
which two important properties of $\bexpm$ follow:
Softness of the sheaf $\bexpm$ and extendability of a usual hyperfunction to
a Laplace hyperfunction.

\

The plan of the paper is as follows. 
In Section $2$, we shortly review the vanishing theorem 
of cohomology groups on a Stein open subset
with values in $\hexpo$,  
which  was given in our previous paper \cite{hu}.
We first recall, for an $n$-dimensional $\mathbb{C}$-vector space $E$ and
a complex linear space $T$ of holomorphic parameters, 
the definitions of the partial radial  compactification $\hat{X}=\rcsp \times T$ of 
$X= E \times T$ and the sheaf $\hexpo$ of holomorphic functions of exponential type on $\hat{X}$.
Since pseudo-convexity is insufficient to guarantee vanishing of higher cohomology groups as Example \ref{eg} shows, to obtain the vanishing theorem, we introduce the notion of the regularity condition at infinity for an open subset in $\hat{X}$.
Then we give the vanishing theorem of cohomology groups and some related results.

The main purpose of Section $3$ is 
to establish an edge of the wedge theorem for $\hexpo$.
Before showing the main theorem, we prepare  several  vanishing theorems of cohomology groups with values in $\hexpo$.
We first show a Martineau type theorem for $\hexpo$ 
which is a key of the proof for the edge of the wedge theorem. 
Then, for an $n$-dimensional $\mathbb{R}$-vector space $M$, 
we show the edge of the wedge theorem along $\overline{M} \times T$ 
for $\hexpo$ and pure
$n$-codimensionality of the boundary $\partial M \times T$ relative to  $\hexpo$.

In Section $4$, we define the sheaf of Laplace hyperfunctions with holomorphic parameters and study its fundamental properties, especially,
we show that any hyperfunction can be extended to a Laplace hyperfunction. 
We also give  the canonical embedding from the sheaf of real analytic 
functions of exponential type to the one of Laplace hyperfunctions.
Softness of the sheaf of Laplace hyperfunctions is shown
in the last section.

\

At the end of the introduction,  
the authors are  grateful to Professor Hikosaburo Komatsu for the valuable lectures and advises.

\section{A vanishing theorem of cohomology groups on a Stein open subset}\label{sec:vanishing_on_stein}

The aim of this section is to review 
the vanishing theorem of cohomology groups 
on a Stein open subset with coefficients in holomorphic functions 
of exponential type.
For the details we refer the reader to  \cite{hu}.

\

Let $n \in \mathbb{N}$ and $E$ be an $n$-dimensional $\mathbb{C}$-vector space with a
norm $|x|$ ($x \in E$).  
We first introduce the radial compactification $\rcsp$ of $E$ and 
the sheaf $\hexpo$ of holomorphic functions of exponential type.
We denote by $E^\times$ the set $E \setminus \{0\}$ and by $\mathbb{R}_+$ the set of
positive real numbers. 
\begin{definition}
The radial compactification $\rcsp$ of $E$ is defined by 
the disjoint union of $E$ and the copy $(\sspace)\infty$ of the quotient space $\sspace$.
The $\rcsp$ is equipped with the topology for which
a sequence $\{x_k\}_{k \in \mathbb{N}}$ of points in $E$ converges to a point 
$x^* \infty \in (\sspace)\infty$ if and only if
\begin{equation}
|x_k| \to \infty\,\,\text{ and }\,\, 
\pi_{E^\times}(x_k) \to x^* \,\, \text{ in $\sspace$}.
\end{equation}
Here $\pi_{E^\times}: E^\times \to \sspace$ is the canonical projection.
\end{definition}
Note that it follows from the definition of $\rcsp$ that 
any linear mapping on $E$ induces the one on $\rcsp$.

Let  $S^{2n-1} $ be the real $(2n-1)$-dimensional unit sphere.
Then the quotient space $\sspace$ can be identified with $S^{2n-1}$ and
this fact is often used in subsequent arguments.
Hence the radial compactification $\rcsp$ of $E$ is identified with the disjoint 
union of $\mathbb{C}^{n}$ and the copy $S^{2n-1}\infty$ of $S^{2n-1}$,
and the topology of $\rcsp$ is given as follows in this identification$:$
Let $D$ be a closed unit ball of center the origin in ${\mathbb C}^n$ 
which is considered as a real $2n$-dimensional topological manifold with the
boundary $S^{2n-1}$.
We define a bijection $\phi(z)$ from $D$ to $\rcsp$ by
$$
\phi(z) =
\left\{
\begin{array}{ll}
\dfrac{z}{1 - \vert z \vert} \in {\mathbb C}^{n}
&\quad (z \in D^\circ), \\
\\
z\infty \in S^{2n-1}\infty  &\quad (z \in \partial D).
\end{array}
\right
.$$
Here $D^\circ$ and $\partial D$ denote the interior and the boundary of $D$ in $\mathbb{C}^{n}$, respectively.
Then $\rcsp$ is equipped with the topology so that $\phi$ gives a topological
isomorphism.

Let $T$ be the linear complex space $\mathbb{C}^m$ ($m \ge 0$)
of holomorphic parameters, and we set $X := E \times T$.
We denote by $\hat{X}$  the partial radial compactification $\prcsp$ of $X$, 
and we also denote by $X_\infty$ the closed subset $\hat{X}\setminus X$ in $\hat{X}$.
Let $(z,\,t)$ (resp. $(z\infty,\,t)$) be a system of coordinates of $X$ 
(resp. $X_\infty$).
A family of fundamental neighborhoods of a point in $\hat{X}$ is given by the following sets;
for a point $(z_0,\, t_0) \in X \subset \hat{X}$,
it is given by
\begin{equation}\label{rcsp-fn-type1}
B_\epsilon(z_0, t_0) := \{(z,t) \in X;\, \vert z - z_0 \vert < \epsilon,\, \vert t - t_0 \vert < \epsilon\}
\end{equation}
for $\epsilon > 0$. On the other hand, for a pint $(z_{0}\infty,\, t_0) \in X_\infty \subset \hat{X} $,  it is
\begin{equation}\label{n.b.h.d}
G_r(\Gamma,\, t_0) := \big(\left\{z \in E;\, \vert z \vert > r,\,\,
\pi_{E^\times}(z) \in \Gamma \right\}
\,\sqcup\, \Gamma\infty \big)\times 
\left\{t \in T;\, \vert t - t_0 \vert < r^{-1}\right\},
\end{equation}
where $r > 0$ and $\Gamma$ runs through open neighborhoods of $z_{0}$ in 
$S^{2n-1}=\sspace$.

Let ${\mathcal O}_{X}$ be the sheaf of holomorphic functions on $X$.
We now define the sheaf of holomorphic  functions of exponential type on $\hat{X}$.

\begin{definition}
Let $\Omega$ be an open subset in $\hat{X}$.
We define the set $\hexpo(\Omega)$ of holomorphic functions of exponential type on $\Omega$ to be the set of all holomorphic functions $f(z,\,t)$ on $\Omega \cap X$ such that,  for any compact set $K$ in $\Omega$, $f(z,\,t)$ satisfies the exponential growth condition
\begin{equation}\label{gc}
|f(z,\,t)|\leq C_{K} e^{H_{K} |z|}  \qquad  ((z,\,t) \in K \cap X)
\end{equation}
with positive constants $C_{K}$ and $H_{K}$.
Let us denote by $\hexpo$ the associated sheaf  on $\hat{X}$ of the presheaf $\{\hexpo(\Omega)\}_{\Omega}$.
\end{definition}
Note that the growth condition of $f(z,\,t)$ is imposed only on the variables $z$.
It is easily seen that  the restriction of the sheaf $\hexpo$ to $X$ coincides with the sheaf $\mathcal{O}_{X}$.

We recall the regularity condition at $\infty$ for an open subset in $\hat{X}$
which plays an essential role in showing our vanishing theorem of cohomology groups on a 
Stein open subset for $\hexpo$.

\begin{definition}
Let $A$ be a subset in $\hat{X}$.
A point $(z{\infty},\,t)$ in $X_\infty$ belongs to the set 
$\operatorname{clos}^1_\infty(A) \subset X_\infty$ 
if and only if there exist points $\{(z_k,\, t_k)\}_{k \in \mathbb N}$ in $A \cap X$ that satisfy the following two conditions when $k \to \infty$.
\begin{enumerate}
\item [1.] $(z_k,\, t_k) \to (z{\infty},\,t)$ in $\hat{X}$.
\item [2.] ${|z_{k+1}|} /{|z_{k}|} \rightarrow 1$.
\end{enumerate}
\end{definition}
Note that the above definition
is independent of the choice of a system of coordinates of $\hat{X}$ and a norm on $E$.
Define 
\begin{equation}
N^1_\infty(A) := X_\infty \setminus 
\operatorname{clos}^1_\infty(X \setminus A).
\end{equation}
We can confirm that, if $U$ is an open subset in $\hat{X}$, 
then $N^{1}_{\infty}(U) \supset U\cap X_{\infty}$ holds.

\begin{definition}
An open subset $U \subset \hat{X}$ satisfying $N^1_\infty(U) = U \cap X_\infty$ is said to be regular at $\infty$. 
\end{definition}
It is easily seen that a finite intersection of open subsets which are regular at
$\infty$ is again regular at $\infty$.
We also give a sufficient condition for which an open subset becomes regular at $\infty$.
Let $A \subset \hat{X}$ be a subset. Set
\begin{equation}
N^L_\infty(A) :=
\left\{(\zeta \infty,\, t) \in X_\infty;\,
(\zeta \infty,\, t) \in \overline{({\mathbb R}_+ \zeta \times \{t\})\, \cap\, A} 
\right\} \subset X_\infty.
\end{equation}
Here ${\mathbb R}_+\zeta$ is the real half line $\pi_{E^\times}^{-1}(\zeta)$ in $E$ 
and the closure $ \overline{({\mathbb R}_+ \zeta \times \{t\})\, \cap\, A} $ is taken in $\hat{X}$. 
Then we have the following lemma.

\begin{lemma}[\cite{hu}]
Let $U\subset \hat{X}$ be an open subset. 
If $N^L_\infty(U) = U \cap X_\infty$ holds, then $U$ is regular at
$\infty$.
\end{lemma}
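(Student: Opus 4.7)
The plan is to translate the equality $N^1_\infty(U)=U\cap X_\infty$ into a statement about $\operatorname{clos}^1_\infty(X\setminus U)$, observe that one inclusion is automatic from openness of $U$, and then use the hypothesis to produce, for every point of $X_\infty\setminus U$, a sequence in $X\setminus U$ along the radial half-line that satisfies the ratio condition $|z_{k+1}|/|z_k|\to 1$.

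By the very definition $N^1_\infty(U)=X_\infty\setminus\operatorname{clos}^1_\infty(X\setminus U)$, the conclusion of the lemma is equivalent to $\operatorname{clos}^1_\infty(X\setminus U)=X_\infty\setminus U$. The inclusion ``$\subset$'' is valid for any open $U\subset\hat X$, since any sequence $(z_k,t_k)\in X\setminus U$ converging in $\hat X$ to some $(z\infty,t_0)$ lies in the closed set $\hat X\setminus U$, which forces $(z\infty,t_0)\in X_\infty\setminus U$; this is exactly the remark $N^1_\infty(U)\supset U\cap X_\infty$ already recorded just before the lemma. Only the opposite inclusion will require the hypothesis.

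For ``$\supset$'', I would take $(z\infty,t_0)\in X_\infty\setminus U$. By the hypothesis $N^L_\infty(U)=U\cap X_\infty$, the point $(z\infty,t_0)$ does not lie in $N^L_\infty(U)$, i.e.\ it is not in the closure in $\hat X$ of $(\mathbb{R}_+ z\times\{t_0\})\cap U$. Choosing a basic neighborhood of the form \eqref{n.b.h.d} disjoint from this set produces a threshold $R>0$ such that $(sz,t_0)\notin U$ for every real $s>R$; the angular component $\Gamma$ and the $t$-radius of the neighborhood play no role because the set in question is supported on the single ray $\mathbb{R}_+ z\times\{t_0\}$. With this threshold in hand, I would set $(z_k,t_k):=((R+k)z,t_0)\in X\setminus U$ for $k\in\mathbb{N}$: then $|z_k|=(R+k)|z|\to\infty$ and $\pi_{E^\times}(z_k)=\pi_{E^\times}(z)$ is constant, so $(z_k,t_k)\to(z\infty,t_0)$ in $\hat X$, while $|z_{k+1}|/|z_k|=(R+k+1)/(R+k)\to 1$. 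Hence $(z\infty,t_0)\in\operatorname{clos}^1_\infty(X\setminus U)$, which completes the proof. I do not foresee a real obstacle; the only delicate step is reading the non-accumulation condition off the neighborhood basis \eqref{n.b.h.d}, which is routine.
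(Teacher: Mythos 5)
Your proof is correct, and since the paper only quotes this lemma from \cite{hu} without reproducing a proof, there is nothing to compare against; your argument is the natural one. Both steps check out: the inclusion $\operatorname{clos}^1_\infty(X\setminus U)\subset X_\infty\setminus U$ follows from $\hat X\setminus U$ being closed, and for the converse the failure of $(z\infty,t_0)$ to lie in $N^L_\infty(U)$ does give a basic neighborhood $G_r(\Gamma,t_0)$ avoiding $(\mathbb{R}_+z\times\{t_0\})\cap U$, whose intersection with the ray is exactly the tail $s>r$, so the equally spaced points $((R+k)\hat z,t_0)$ lie in $X\setminus U$ and satisfy both conditions defining $\operatorname{clos}^1_\infty$.
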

Note that a finite union of open subsets which satisfy the condition given in the above
lemma is also regular at $\infty$.
We give some examples of open subsets which are regular at $\infty$:
Let $\rcsptwo$ denote the radial compactification of $\mathbb{C}$, and let $\overline{\mathbb{R}}$
be the closure of $\mathbb{R}$ in $\rcsptwo$. Note that $\overline{\mathbb{R}}$ consists
of $\mathbb{R}$ and two points $\{\pm \infty\}$.

\begin{eg} [\cite{hu}, Example 3.6]{\normalfont
Let $U$ be the open set $G_r(\Gamma, 0) \cup \tilde{U}$ where
$\tilde{U}$ is a bounded open subset in $X$ and the cone 
$G_r(\Gamma,\, 0)$ was defined
by $(\ref{n.b.h.d})$ with $r > 0$ and $\Gamma$ being an open subset in $S^{2n-1}=\sspace$.
Then $U$ is regular at $\infty$ as we have $N^L_\infty(U) = U \cap X_\infty$. 
In particular,
${\rcsptwo}$ and $\rcsptwo \setminus [a, +\infty]$ 
$(a \in [-\infty, \infty))$ are regular at $\infty$.
}
\end{eg}
\begin{eg}[\cite{hu}, Example 3.6]{\normalfont
For the set $U:=\rcsptwo\setminus\{1,2,3,4, \dots, +\infty\}$,
we have $N^1_\infty(U) = S^1 \infty \setminus \{+\infty\}$. Hence $U$ is
regular at $\infty$.
However, for the set $U:=\rcsptwo\setminus\{1,2,4,8,16,\dots, +\infty\}$, 
$U$ is not regular at $\infty$ because of $N^1_\infty(U) = S^1 \infty$. 
Note that we have $N^L_\infty(U) = S^1\infty$ for
both the cases.}
\end{eg}

We prepare some notations before stating the theorem.
For a subset $A \subset X$, we set
\begin{equation}
\operatorname{dist}(p, A) =
\left\{
\begin{array}{ll}
\inf_{q \in A} \vert p - q \vert
&\qquad \mathrm{if} \,\,\, A \neq \emptyset, \\
\\
+\infty  &\qquad \mathrm{if}\,\,\,  A = \emptyset.
\end{array}
\right.
\end{equation}
Let $p_{2} : {X=E \times T} \rightarrow T$ be the canonical projection.
We also set, for $q = (z,t) \in X$,
\begin{equation}
\operatorname{dist}_{E}(q,\, A) := 
\operatorname{dist}(q,\, A \cap p_2^{-1}(p_2(q)))
= \underset{(\zeta,\, t) \in A}{\operatorname{inf}}|z - \zeta|.
\end{equation}
For an open subset $\Omega \subset \hat{X}$, we define the function 
\begin{equation}{\label{eq:def-psi}}
\begin{aligned}
\psi(p) &:= 
\min\left\{\dfrac{1}{2},\,\, \dfrac{\operatorname{dist}_{E}( p,\, X \setminus
\Omega)}
                        {1 + \vert z \vert}\right\}\quad
\mathrm{for} \,\,\,p = (z,t) \in X \\
\end{aligned}
\end{equation}
and we  put, for $\epsilon>0$,
\begin{equation}
\begin{aligned}
\Omega_{\epsilon} &:= \left\{p=(z,t) \in \Omega \cap X;\,
\operatorname{dist}(p,\, X \setminus \Omega) > \epsilon,\,
|t| < \dfrac{1}{\epsilon}\right\}.
\end{aligned}
\end{equation}
Note that the function $\psi(p)$ is lower semicontinuous and
continuous with respect to the variables $z$, however,
it is not necessarily continuous with respect to the variables $t$.
\begin{theorem}[\cite{hu},\, Theorem 3.7]{\label{cvs}}
Let $\Omega$ be an open subset in $\hat{X}$.
Assume the following conditions 1.~and 2.
\begin{enumerate}
\item [1.] $\Omega \cap X$ is pseudo-convex in $X$ and
$\Omega$ is regular at $\infty$. 
\item [2.] At a point in $\Omega \cap X$ sufficiently close to $z=\infty$ the 
$\psi(z,t)$ is continuous and
uniformly continuous with respect to the variables $t$, that is,
for any $\epsilon > 0$, there exist $\delta_\epsilon > 0$ and $R_\epsilon > 0$ for
which
$\psi(z,t)$ is continuous on 
$\Omega_{\epsilon,\, R_\epsilon} := \Omega_\epsilon \cap \{|z| > R_\epsilon\}$ 
and it satisfies
$$
\left|\psi(z,\,t) - \psi(z,\, t')\right| < \epsilon\,\qquad
((z,\,t),\, (z,\,t') \in \Omega_{\epsilon,\, R_\epsilon},\,\,
\vert t - t' \vert < \delta_\epsilon).
$$
\end{enumerate}
Then we have
\begin{equation}\label{th:main-result}
\operatorname{H}^k(\Omega,\, \hexpo) = 0 \qquad (k \ne 0).
\end{equation}
\end{theorem}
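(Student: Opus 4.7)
The plan is to realize $\operatorname{H}^k(\Omega,\hexpo)$ via a Dolbeault-type resolution and solve the $\bar\partial$-equation on $\Omega\cap X$ with weighted $L^2$-estimates that translate back into exponential growth. Since $\Omega\cap X$ is pseudo-convex, Hörmander's $L^2$ theory applies; the task is to design plurisubharmonic weights that (i) force the solution to vanish near $\partial\Omega$, (ii) only allow linear growth in $\log|f|$ with respect to $|z|$ (so the outcome lies in $\hexpo$), and (iii) are compatible with the regularity-at-$\infty$ hypothesis so that nothing is lost when we pass to the limit along the exhaustion $\{\Omega_\epsilon\}_{\epsilon>0}$.

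First I would set up the exhaustion. Choose $\epsilon_\nu\downarrow 0$ and note that the $\Omega_{\epsilon_\nu}$ are open, relatively compact in each $t$-slab, and fill out $\Omega\cap X$. The function $\psi(p)$ defined in $(\ref{eq:def-psi})$ serves as the natural boundary distance function that is invariant under the radial compactification. Using hypothesis 2, which gives continuity and uniform continuity of $\psi$ in $t$ near $z=\infty$, I would regularize $\psi$ by a standard convolution in $t$ to obtain a smooth substitute $\tilde\psi_\nu$ satisfying the same comparability estimates. Then I would build a plurisubharmonic weight of the shape
\begin{equation}
\varphi_\nu(z,t) \;=\; \nu\bigl(|z|^2+|t|^2\bigr) \;-\; A_\nu\log \tilde\psi_\nu(z,t),
\end{equation}
with $A_\nu\to\infty$ slowly enough. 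The quadratic term provides the positivity needed for Hörmander; the $-\log\tilde\psi_\nu$ term blows up near $\partial\Omega\cap X$ so that $L^2(\varphi_\nu)$-solutions automatically live in $\Omega_{\epsilon_\nu}$; and the growth of $\varphi_\nu$ in $|z|$ is tuned so that an $L^2(\varphi_\nu)$-bound becomes an exponential bound $|u|\le C\,e^{H|z|}$ on any compact subset of $\Omega$ after applying interior Cauchy estimates.

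Given a $\bar\partial$-closed $(0,k)$-form $f$ representing a class in $\operatorname{H}^k(\Omega,\hexpo)$ (via a fine resolution by $\hexpo$-valued forms), Hörmander's theorem yields $u_\nu$ on $\Omega\cap X$ with $\bar\partial u_\nu=f$ and a weighted $L^2$ estimate; by the construction of $\varphi_\nu$, $u_\nu\in\hexpo(\Omega_{\epsilon_\nu})$. The differences $u_{\nu+1}-u_\nu$ are $\bar\partial$-closed, hence elements of $\hexpo(\Omega_{\epsilon_\nu})$; a Mittag-Leffler-type correction using a Runge-type approximation of sections of $\hexpo$ on $\Omega_{\epsilon_\nu}$ by sections on $\Omega_{\epsilon_{\nu+1}}$ allows one to modify the $u_\nu$ so that they converge (locally uniformly, in $\hexpo$) to a global solution $u\in\hexpo(\Omega)$ of $\bar\partial u=f$. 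This gives the vanishing for $k\ge 1$.

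The main obstacle is the last paragraph: the space $\hexpo(\Omega_{\epsilon_\nu})$ is a projective limit of dual Fréchet spaces with non-compact, non-surjective transition maps (as the introduction stresses), so the classical topological Mittag-Leffler argument is not directly available. The regularity-at-$\infty$ hypothesis $N^1_\infty(\Omega)=\Omega\cap X_\infty$ is precisely what is needed to rule out the pathological sequences $\{z_k\}$ with $|z_{k+1}|/|z_k|\to 1$ accumulating to removed points of $X_\infty$, and hence to establish the Runge-type approximation lemma that drives the limit. Isolating this approximation lemma, proving it under condition 1 alone, and then combining it with the Hörmander solution built from condition 2 is the heart of the argument.
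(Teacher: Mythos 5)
First, note that the paper you are reading does not actually prove Theorem \ref{cvs}: it is imported verbatim from the authors' earlier work (\cite{hu}, Theorem 3.7), and Section \ref{sec:vanishing_on_stein} only reviews it. So your attempt can only be measured against the strategy of that reference, which is indeed a weighted $L^2$ $\bar\partial$-argument combined with the exhaustion by the sets $\Omega_\epsilon$ and an approximation/gluing step; at the level of architecture your sketch points in the right direction.

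There are, however, concrete gaps. (1) The weight $\varphi_\nu(z,t)=\nu(|z|^2+|t|^2)-A_\nu\log\tilde\psi_\nu$ is incompatible with the target sheaf: an $L^2(e^{-\varphi_\nu})$ bound plus interior Cauchy estimates gives $|u|\le C e^{\nu|z|^2/2+\cdots}$, i.e.\ order $2$ growth, not the exponential type $e^{H|z|}$ required by \eqref{gc}. You state the correct desideratum (linear growth of $\log|u|$ in $|z|$) and then write a weight that violates it; the weight must grow only linearly in $|z|$ (e.g.\ $H_\nu|z|+c\log(1+|z|^2)$, which is plurisubharmonic since $|z|$ is a norm), and one must then check that the datum $f$ --- which is exponentially bounded only on compact subsets of $\Omega$ in $\hat{X}$, with constants $H_K$ depending on $K$ --- is square-integrable against $e^{-\varphi_\nu}$ on $\Omega_{\epsilon_\nu}$. (2) The two steps carrying all the difficulty are asserted rather than argued: plurisubharmonicity (or a psh substitute) of $-\log\tilde\psi_\nu$, which is not automatic for a minimum of two functions divided by $1+|z|$ and is precisely where pseudo-convexity and condition 2 must be used; and the Runge-type density of $\hexpo(\Omega)$ in $\hexpo(\Omega_{\epsilon})$ needed for the $k=1$ gluing. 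You also locate the role of regularity at $\infty$ solely in the Runge step, whereas it is needed already to guarantee that the sets $\Omega_\epsilon$ exhaust neighborhoods in $\hat{X}$ of every point of $\Omega\cap X_\infty$ (equivalently, that $\psi$ stays bounded below along sequences converging to such points); without this the glued solution need not satisfy the exponential bound near infinity, and Example \ref{eg} shows the conclusion genuinely fails. Finally, realizing $\operatorname{H}^k(\Omega,\hexpo)$ by a Dolbeault-type fine resolution on $\hat{X}$ requires a $\bar\partial$-Poincar\'e lemma with exponential bounds near $X_\infty$, which is itself a nontrivial claim. As it stands the proposal is a reasonable roadmap, not a proof.
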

.

\begin{remark}[\cite{hu}, \,Corollary 3.8]\label{remark}{\normalfont
 Let $U\times V$ be an open subset of product type in 
 $\hat{X} = \rcsp\times T$.
Assume that $U$ satisfies the regularity condition at $\infty$ and that 
$(U \times V) \cap X$ is a pseudo-convex open subset 
in $X$. Then,
since an open subset $U\times V$ of product type always satisfies 
the condition 2.~in  the theorem,  we have}
\begin{equation}
\mathrm{H}^k(U\times V,\, \hexpo) = 0 \qquad  ( k \ne 0).
\end{equation}
\end{remark}

Furthermore, when $n=1$, we can obtain a much stronger result. Remember that
$\rcsptwo$ denotes the radial compactification of $\mathbb{C}$.
\begin{theorem}[\cite{hu}, \,Theorem 5.2]\label{dim-1}
Let $U$ be an open subset in $\rcsptwo$, and let $W$ be a pseudo-convex open subset in $T$. Then we have
\begin{equation}
\operatorname{H}^{k}(U \times W,\,\mathcal{O}^{\exp}_{\hat{X}}) =0 \qquad(k\neq0).
\end{equation}
\end{theorem}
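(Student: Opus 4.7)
Let $V := U \cap S^1\infty$ and define $B := N^1_\infty(U) \setminus V$; the set $B$ is the locus where $U$ fails to be regular at infinity, and it is disjoint from $U$. The theorem strengthens Theorem~\ref{cvs} (equivalently Remark~\ref{remark}) by dropping regularity of $U$, a strengthening that is possible specifically in complex dimension one because $S^1\infty$ is itself one-dimensional and its open subsets are countable disjoint unions of arcs. My strategy is to exhaust $U$ from inside by regular-at-infinity opens $U_n$ and then pass to the limit via the Milnor exact sequence.

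\textbf{Exhaustion and reduction to Remark~\ref{remark}.} I first construct a decreasing sequence of closed neighborhoods $K_n \supset B$ in $\rcsptwo$ with $K_n \cap V = \emptyset$ and $\bigcap_n K_n = B$, and put $U_n := U \setminus K_n$. Each $K_n$ is taken as a finite union of closed sectors $\{|z|\ge R_{n,j},\ \arg z \in \overline{J_{n,j}}\}$ together with the matching closed arcs at infinity, arranged so as to shrink onto $B$. A routine verification on defining sequences (in the spirit of Example~3.9) then gives $\operatorname{clos}^1_\infty(X\setminus U_n) = (S^1\infty)\setminus (U_n\cap S^1\infty)$, so $N^1_\infty(U_n) = U_n \cap S^1\infty$ and $U_n$ is regular at infinity. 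Since $B\cap U=\emptyset$ one has $\bigcup_n U_n = U$. As $(U_n \cap \mathbb{C}) \times W$ is pseudo-convex, Remark~\ref{remark} applied to the product-type open $U_n \times W$ yields
\begin{equation*}
\operatorname{H}^k(U_n \times W,\,\hexpo) = 0 \qquad (k \neq 0).
\end{equation*}

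\textbf{Milnor sequence and the main obstacle.} For the increasing union $U\times W = \bigcup_n (U_n \times W)$ one has the Milnor short exact sequence
\begin{equation*}
0 \longrightarrow {\varprojlim_n}{}^1 \operatorname{H}^{k-1}(U_n \times W,\,\hexpo) \longrightarrow \operatorname{H}^k(U \times W,\,\hexpo) \longrightarrow \varprojlim_n \operatorname{H}^k(U_n \times W,\,\hexpo) \longrightarrow 0.
\end{equation*}
For $k \geq 2$ both outer terms vanish by the previous step, giving the desired vanishing. For $k = 1$ the only survivor is ${\varprojlim_n}{}^1 \operatorname{H}^0(U_n \times W,\,\hexpo)$, so everything reduces to the Mittag-Leffler property of the inverse system of restriction maps. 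This is the hard step: as stressed in the Introduction, the natural topology on $\operatorname{H}^0(U_n\times W,\,\hexpo)$ is pathological, so a topological Mittag-Leffler is unavailable. I would attack it algebraically, by showing that for each $n$ any $f \in \operatorname{H}^0(U_n \times W,\,\hexpo)$ extends to some $\operatorname{H}^0(U_m \times W,\,\hexpo)$ with $m=m(n)$ via a one-dimensional Cauchy-transform construction: cut $f$ off in the variable $z$ inside $K_n\setminus K_m$, and correct the resulting $\bar\partial$-datum---compactly supported in $z$ for each $t$---by the explicit Cauchy kernel in $z$, whose $O(|z|^{-1})$ decay trivially preserves the exponential-type condition and whose holomorphy in $t$ is inherited from $f$. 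This one-dimensional extension mechanism, absent in higher dimensions, is exactly what allows the regularity hypothesis of Theorem~\ref{cvs} to be dropped in the present setting.
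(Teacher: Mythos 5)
First, a point of reference: the paper does not prove this statement at all --- Theorem \ref{dim-1} is imported verbatim from \cite{hu} (Theorem 5.2), so there is no in-paper proof to compare against. Your proposal therefore has to stand on its own, and it does not: the reduction is sound for $k\ge 2$ but collapses exactly at $k=1$, which is where the content of the theorem lies.

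On the positive side, the exhaustion-plus-Milnor strategy is coherent in outline. Two caveats on the exhaustion itself: ``closed neighborhoods $K_n\supset B$ with $K_n\cap V=\emptyset$'' do not exist in general, because $\overline{B}$ can meet the boundary of $V$ --- in the paper's own example $U=\rcsptwo\setminus\{1,2,4,8,\dots,+\infty\}$ one has $B=\{+\infty\}$ and $V=S^1\infty\setminus\{+\infty\}$, so every neighborhood of $B$ meets $V$; and $B$ need not be closed. Both are fixable: take $K_n$ to be finitely many closed sectors over arcs shrinking onto the compact set $S^1\infty\setminus V$ with inner radius $R_n\to\infty$; the ray criterion ($N^L_\infty$) then makes each $U_n=U\setminus K_n$ regular at $\infty$ and $\bigcup_n U_n=U$. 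With that, Remark \ref{remark} and the Milnor sequence do give $\mathrm{H}^k(U\times W,\hexpo)=0$ for $k\ge 2$.

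The $k=1$ step is where the proposal fails, in several independent ways. (i) What Mittag--Leffler requires is that the images of $\mathrm{H}^0(U_m\times W,\hexpo)\to\mathrm{H}^0(U_n\times W,\hexpo)$ stabilize in $m$; you instead assert that every section over $U_n\times W$ extends to the \emph{larger} set $U_m\times W$, which is a different (and false) statement: $1/(z-a)$ with $a\in (U_m\setminus U_n)\cap\mathbb{C}$ is a bounded, hence exponential-type, section over $U_n\times W$ that extends to no $U_m\times W$ containing $a$. (ii) The correct Mittag--Leffler property also fails: for $a\in (U_{m+1}\setminus U_m)\cap\mathbb{C}$ the restriction of $1/(z-a)$ lies in the image from level $m$ but, by unique continuation on the connected sets $U_m\cap X\subset U_{m+1}\cap X$, not in the image from level $m+1$; so the images strictly decrease forever. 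Hence $\varprojlim^1=0$ cannot be obtained from Mittag--Leffler, and your reduction leaves the essential difficulty untouched --- indeed $\varprojlim_n^1\,\mathrm{H}^0(U_n\times W,\hexpo)=0$ is, via your own Milnor sequence, \emph{equivalent} to the $k=1$ case of the theorem. (iii) The proposed Cauchy-transform repair is also unsound as stated: the set $K_n\setminus K_m$ where you cut off is a sector reaching out to $z=\infty$, so the $\bar\partial$-datum is \emph{not} compactly supported in $z$, the Cauchy integral is taken over an unbounded region where the integrand grows exponentially, and neither convergence nor the exponential-type bound is ``trivial.'' Controlling exactly this phenomenon is the hard analytic point of the theory; compare the proof of Theorem \ref{martineau} in this paper, where an auxiliary variable $\eta$ and damping factors $e^{-\eta\zeta}$ are introduced precisely to make such integrals converge and to recover growth estimates. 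A correct one-dimensional argument (as in \cite{hu}) must supply a decomposition (Cousin-type splitting $f=f_1-f_2$ with controlled growth near the bad directions at infinity), not an extension, and must carry the exponential bounds through an integral over a non-compact contour; none of that is present here.
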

 However, if $n \geq 1$,
the theorem does not hold without the regularity condition as the example  below shows.

\begin{eg}[\cite{hu},\,Example 3.17]\label{eg}
{\normalfont
We consider the case $n=2,\,m=0$, i.e., $X = E = {\mathbb C}^2$ and $\hat{X} = \rcsp =
\rcspfour$.
Set
$$
\begin{aligned}
U&:=\left\{(z_1,\, z_2) \in X;\, |\arg(z_1)| < \dfrac{\pi}{4},\, |z_2| <
|z_1|\right\},\\
\Omega&:=\Big( \overline{U} \Big)^\circ \setminus \{(1,0)\infty\} \subset \hat{X}.
\end{aligned}
$$
Here $(1,0)\infty$ is the point in $(E^\times/\mathbb{R}_+) \infty$ which is the image of
the point $(1,\,0) \in \mathbb{C}^2$ by the canonical projection $\pi_{E^\times}$.
It is easy to check  that $\Omega \cap X = U$ is pseudo-convex in $X$ and $\Omega$ is not regular
at $\infty$.
In this case, we have $\mathrm{H}^1(\Omega,\, \hexpo) \ne 0$.
Therefore  the vanishing theorem does not hold without the regularity condition.
}
\end{eg}

\section{An edge of the wedge theorem for the sheaf $\mathcal{O}^{\exp}_{\hat{X}}$}

The purpose of this section is to show an edge of the wedge theorem 
for $\mathcal{O}^{\exp}_{\hat{X}}$, which is done in Subsection 3.2.
We first prepare a Martineau type theorem for $\hexpoy$.
\subsection{A Martineau type theorem for the sheaf $\hexpoy$}

Let $m \ge 0$ and $n \ge 1$.
Set $T := \mathbb{C}^m$ and
$$
Y :=\mathbb{C} \times \mathbb{C}^{n-1} \times T\,\, \subset \,\,\,
\hat{Y} := \rcsptwo \times \mathbb{C}^{n-1} \times T,
$$
where $\rcsptwo$ denotes the radial compactification of $\mathbb{C}$.
We denote by $\overline{\mathbb{R}}$  the closure of $\mathbb{R}$ in $\rcsptwo$,
which is $\mathbb{R} \sqcup \{\pm \infty\}$.

\begin{theorem}[A Martineau type theorem for the sheaf $\hexpoy$]\label{martineau}
Let $S=[a,\,+\infty] \,\,(a\in\mathbb{R} \cup \{+\infty\})$ be a compact subset in $\overline{\mathbb{R}}$, and let $K=K_{1}\times \cdots \times K_{n-1} \subset L=L_1\times \cdots \times L_{n-1}$ be a pair of closed polydiscs in $\mathbb{C}^{n-1}$. Assume that
$W \subset V$ are non-empty connected Stein open subsets in $T$.
Then the restriction morphism
\begin{equation}\label{martinue}
\mathrm{H}^{n}_{S\times K\times V} (\rcsptwo \times \mathbb{C}^{n-1}\times V,\,\hexpoy) \rightarrow \mathrm{H}^{n}_{ S\times L\times W } (\rcsptwo \times \mathbb{C}^{n-1} \times W,\,\hexpoy)
\end{equation}
is injective.
\end{theorem}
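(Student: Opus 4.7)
The plan is to compute both local cohomology groups as \v{C}ech cohomologies with respect to the natural Stein covers, and to reduce the injectivity to an extension problem handled by iterated Laurent decomposition in the $w$-variables together with an integral representation.

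First, cover $\rcsptwo \times \mathbb{C}^{n-1} \times V \setminus (S \times K \times V)$ by $U_0 := (\rcsptwo \setminus S) \times \mathbb{C}^{n-1} \times V$ and $U_j := \rcsptwo \times \mathbb{C}^{j-1} \times (\mathbb{C} \setminus K_j) \times \mathbb{C}^{n-1-j} \times V$ for $j = 1, \dots, n-1$, and analogously $\{U_j'\}$ on the right-hand side. Every intersection $U_I$ has the form (open subset of $\rcsptwo$, either $\rcsptwo$ itself or $\rcsptwo \setminus S$, both of which are regular at $\infty$) times a pseudo-convex open subset of $\mathbb{C}^{n-1+m}$, so Theorem \ref{dim-1} yields $\mathrm{H}^k(U_I, \hexpoy) = 0$ for $k > 0$; the ambient space is $\hexpoy$-acyclic in positive degrees by Remark \ref{remark}. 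Leray's theorem combined with the local-cohomology long exact sequence then gives
\[
\mathrm{H}^n_{S \times K \times V}(\rcsptwo \times \mathbb{C}^{n-1} \times V,\, \hexpoy) \;\cong\; \hexpoy\!\Bigl(\textstyle\bigcap_i U_i\Bigr) \big/ \delta\, \check{C}^{n-2},
\]
and analogously for the right-hand side with $L_j, W$ in place of $K_j, V$.

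Given a \v{C}ech representative $f \in \hexpoy(\bigcap_i U_i)$, I then apply Laurent decomposition in each variable $w_j$ around $K_j$, writing $f = f_j^+ + f_j^-$ with $f_j^+$ entire in $w_j$ (so $f_j^+ \in \hexpoy(A_j)$, where $A_j := \bigcap_{i \neq j} U_i$, representing a \v{C}ech coboundary) and $f_j^-$ vanishing as $w_j \to \infty$. Because the Cauchy integrals defining $f_j^{\pm}$ use only bounded contours inside $\mathbb{C} \setminus K_j$, the exponential growth of $f$ in $z$ is preserved. Iterating over $j = 1, \dots, n-1$ replaces $f$ by a canonical representative vanishing at $w_j = \infty$ for every $j \ge 1$; by uniqueness of Laurent this reduction commutes with restriction to the right-hand side cover, and so the hypothesis that $[f]$ vanishes in the right-hand side reduces to the existence of a single $g_0' \in \hexpoy\bigl(\rcsptwo \times \prod_j (\mathbb{C}\setminus L_j) \times W\bigr)$ with $f|_{(\rcsptwo \setminus S) \times \prod_j(\mathbb{C}\setminus L_j) \times W} = g_0'|$.

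The remaining task is to produce $g_0 \in \hexpoy\bigl(\rcsptwo \times \prod_j (\mathbb{C}\setminus K_j) \times V\bigr)$ agreeing with $f$ on $(\rcsptwo \setminus S) \times \prod_j (\mathbb{C}\setminus K_j) \times V$; gluing $f$ with $g_0'$ on their overlap reduces this to an extension problem across $S \times (L \setminus K) \times (V \setminus W)$. I expect to solve this via a multiple Cauchy integral of $f$ using its canonical vanishing at each $w_j = \infty$, namely an integral in the $w$-variables over bounded contours around each $K_j$ chosen in the annulus $L_j \setminus K_j$; this integral defines an extension entire in $w$ on the interior region bounded by the contours and matches $g_0'$ on the overlap by uniqueness. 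Analyticity in $t \in V$ combined with the identity principle on the connected Stein set $V$ then forces the extension across $V \setminus W$. The main obstacle, and the most delicate point, is to verify that exponential growth in $z$ is preserved throughout this extension; this should follow from the boundedness of the integration contours together with the regularity at $\infty$ of $\rcsptwo \setminus S$, which controls the $z$-dependence uniformly on the contours.
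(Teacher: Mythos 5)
Your \v{C}ech--Leray setup and the passage to a canonical representative vanishing at $w_j=\infty$ are sound and match the first half of the paper's argument (the paper's $G$ is exactly the product of the negative Laurent parts, obtained by the Cauchy integral over contours $\gamma_j$ around $K_j$, and the discarded pieces $H_j$ are coboundary terms). The genuine gap is in the last step, which is the heart of the theorem: extending the canonical representative across $S$ in the $z$-variable. A multiple Cauchy integral of $f$ taken over contours in the $w$-variables cannot produce this extension, because $f$ is only defined for $z\notin S$, and any $w$-integral of $f$ inherits that restriction on $z$; moreover such an integral, for $w_j$ interior to the contours, lands in the wrong $w$-region (it gives holomorphy \emph{inside} the contours, whereas $g_0$ must live on $\prod_j(\mathbb{C}\setminus K_j)$). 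The appeal to the identity principle to ``force the extension across $V\setminus W$'' is also not valid as an existence argument: unique continuation can only identify two functions already defined on a common connected domain, it cannot manufacture the extension whose existence is precisely what is at stake.

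What is missing is a mechanism that converts the decomposition hypothesis on the small set $U_{S,L,W}$ into holomorphy of the canonical representative across $S$ together with the exponential bound. The paper supplies this with a Borel--Laplace type trick in the $z$-plane: it introduces an auxiliary variable $\eta$ and writes $G(z,w,t)=G_0(z,w,t;\eta)-G_1(z,w,t;\eta)$, where $G_0,G_1$ are integrals of $G(\zeta,w,t)e^{\eta(z-\zeta)}/(\zeta-z)$ over two contours $\Gamma,\Gamma'$ in the sector $\Sigma\subset\mathbb{C}_z$, with $z$ outside the region bounded by $\Gamma$ and inside that bounded by $\Gamma'$; the factor $e^{-\eta\zeta}$ (for $|\eta|$ large, $|\arg\eta|$ small) compensates the exponential growth of $G$ along $S$, and $G_1$ is by construction holomorphic across $S$ with $|G_1|\le C e^{\operatorname{Re}\eta z}$. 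One then substitutes $F=\sum_j F_j$: the terms with $j\ge1$ die under the $\gamma$-contour integration (they are holomorphic inside $\gamma_j$), and the $F_0$ term dies after rotating $\Gamma'$ onto the positive real axis, giving $G_0\equiv0$ and $\partial G_1/\partial\eta\equiv0$ first near a point with $w\notin L$, $t\in W$, and then everywhere by unique continuation applied to \emph{globally defined} functions. Hence $G=-G_1$ extends across $S$ with the required growth. Without some substitute for this step your argument does not close.
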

\begin{proof}
Let $(z,\,w,\,t)$ be the coordinates of $Y=\mathbb{C}_{z} \times \mathbb{C}^{n-1}_{w} \times T_{t}$.
We first consider the representation of $\mathrm{H}^{n}_{S\times K\times V} (\rcsptwo \times \mathbb{C}^{n-1}\times V,\,\hexpoy)$ by a relative Leray covering for the sheaf $\hexpoy$.
Choose a point $b<a$ and a sufficiently small constant $\theta>0$. Set
\begin{equation}
\Sigma := \{z \in \mathbb{C} ;\, |\mathrm{arg}(z-b)| < \theta  \} 
\sqcup \{e^{\sqrt{-1} \tau } \infty \in S^1 \infty;\, |\tau| < \theta\} \subset \rcsptwo.
\end{equation}
We also set
\begin{equation}
\begin{aligned}
&U := \Sigma \times \mathbb{C}^{n-1}_{w} \times V,\\
&U_{0} :=(\Sigma \setminus S) \times \mathbb{C}^{n-1}_w \times V,\\
&U_{j}  := \Sigma \times \pi_j^{-1}\left(\mathbb{C}_{w_j} \setminus K_j\right) \times V  
\quad (j= 1,\,\dots, \,n-1),
\end{aligned}
\end{equation}
where $\pi_j: \mathbb{C}_w^{n-1} \to \mathbb{C}_{w_j}$ is the $j$-th projection
of $\mathbb{C}^{n-1}_w$, i.e., 
$\pi_j(w_1,\dots,w_j,\dots, w_{n-1}) = w_j$.
Then  $\mathcal{U}=\{ U,\,U_{0},\,\dots,\,U_{n-1}\}$ and $\mathcal{U}^{'}=\{U_{0},\,\dots,\,U_{n-1}\}$ give  a relative open covering  of the pair
$(U,\,U\setminus (S\times K \times V))$.
On account of Theorem  $\ref{dim-1}$, 
these open sets form  a Leray covering of the pair for the sheaf $\hexpoy$. 
Therefore we obtain the following representation of cohomology groups by the relative $\check{\mathrm{C}}$ech cohomology of $(U,\,U\setminus (S\times K \times V))$ with coefficients in $\hexpoy$.

\begin{equation}
\mathrm{H}^{n}_{S\times K\times V}(\rcsptwo \times \mathbb{C}^{n-1}\times V,\,\hexpoy) =\mathrm{H}^{n}(\mathcal{U}\, \mathrm{mod}\, \mathcal{U^{'}},\,\hexpoy)  =  \frac{\hexpoy(U_{S,\,K,\,V})}{\overset{n-1}{\underset{j =0}{\bigoplus}}\,\, \hexpoy(U^{(j)}_{S,\,K,\,V})}.
\end{equation}
Here $U_{S,\,K,\,V}$ and $U^{(j)}_{S,\,K,\,V}$ ($j=0,\,1,\,\dots,\, n-1$) are defined by
\begin{equation}\label{set}
U_{S,\,K,\,V}:= \underset{0 \le k \le n-1}{\bigcap} U_k\,\,
\text{ and }\,\,
U^{(j)}_{S,\,K,\,V} := \underset{0 \le k \le n-1,\, k \ne j}{\bigcap} U_k
\quad  (j=0,\,\dots,\,n-1).
\end{equation}
Similarly, we define $U_{S,\,L,\,W}$ and $U_{S,\,L,\,W}^{(j)}$ 
by replacing $K$ and $V$ with $L$ and  $W$, respectively.
Thus, the theorem follows from injectivity of the following canonical morphism $\iota$
associated with the restriction map of a holomorphic function.
\begin{equation}
\iota\, : \,  \frac{\,\hexpoy(U_{S,\,K,\,V})}{\overset{n-1}{\underset{j =0}{\bigoplus}}\,\, \hexpoy(U^{(j)}_{S,\,K,\,V})} \longrightarrow 
\frac{\hexpoy(U_{S,\,L,\,W})}{\overset{n-1}{\underset{j =0}{\bigoplus}}\,\,\hexpoy(U^{(j)}_{S,\,L,\,W})}.
\end{equation}

Let us show injectivity of $\iota$.
We take an arbitrary point $(z,\,w,\,t)\in U_{S,\,K,\,V}\cap Y$ and a path
$\gamma_{j}\,\, (1\leq j \leq n-1)$ in $\mathbb{C} \setminus K_{j}$ which encircles $K_{j}$ with clockwise direction such that the point $w_{j}$ is outside $\gamma_{j}$ in the $\mathbb{C}$-plane.
For an element $F(z,\,w,\,t) \in \hexpoy(U_{S,\,K,\,V} )$, we define
\begin{equation}
G(z,\,w,\,t) := \frac{1}{(2\pi \sqrt{-1})^{n-1}}\int_{\gamma_{1} \times \dots \times \gamma_{n-1}}\frac{F(z,\,\mu,\,t)}{(\mu_{1}-w_{1}) \cdots (\mu_{n-1} - w_{n-1})}d\mu.
\end{equation}
It is obvious that  $G(z,\,w,\,t)$ becomes a holomorphic function of exponential type on $U_{S,\,K,\,V}$ by deformation of the path of the integration.
We also take the path $\tilde{\gamma_{j}}\, \, (1\leq j \leq n-1)$ in $\mathbb{C} \setminus L_{j}$ which encircles $L_{j}$ with clockwise direction such that the point $w_{j}$ is inside $\tilde{\gamma_{j}}$.
By deformation of the path of the integration  and Cauchy's integral formula, we obtain
\begin{equation}
\begin{aligned}
&G(z,\,w,\,t) = F(z,\,w,\,t) + \sum_{j=1}^{n-1} H_{j}(z,\,w,\,t),\\
&H_{j}(z,\,w,\,t)\\
& := \frac{1}{(2\pi \sqrt{-1})^{n-j}} \int_{\tilde{\gamma_{j}}\times \gamma_{j+1}\times \dots \times \gamma_{n-1}} \frac{F(z,\,w_{1},\,\dots,\,w_{j-1},\,\mu_{j},\,\dots,\mu_{n-1},\,t)}{(\mu_{j}-w_{j})\cdots(\mu_{n-1} -w_{n-1})}d\mu_{j} \dots d\mu_{n-1}.
\end{aligned}
\end{equation}
Note that $H_{j}(z,\,w,\,t)$ becomes a holomorphic function 
of exponential type on $U^{(j)}_{S,\,K,\,V}$ 
by deformation of the path of the integration.
Hence $\sum_{j=1}^{n-1} H_{j}(z,\,w,\,t)$ belongs to ${\bigoplus}^{n-1}_{j=1}\,\, \hexpoy(U^{(j)}_{S,\,K,\,V})$.
If we could prove that $G(z,\,w,\,t)$ can be extended to  $U_{S,\,K,\,V}^{(0)}$ 
as a holomorphic function of exponential type when the restriction of 
$F$ to ${U_{S,\,L,\,W }}$ vanishes in  
$\hexpoy(U_{S,\,L,\,W}) / {\bigoplus}^{n-1}_{j=0}\,\, \hexpoy(U^{(j)}_{S,\,L,\,W})$, 
then we get injectivity of the morphism $\iota$. 

Suppose  $F|_{U_{S,\,L,\,W}} =0$ in $\hexpoy(U_{S,\,L,\,W}) / {\bigoplus}^{n-1}_{j=0}\,\, \hexpoy(U^{(j)}_{S,\,L,\,W})$. Then there exist functions $ \{F_{j}\}_{j} \in {\bigoplus}^{n-1}_{j=0}\,\, \hexpoy(U^{(j)}_{S,\,L,\,W})$ with $F= \sum_{j} F_{j}$ on $U_{S,\,L,\,W}$.
For a point $(z,\,w,\,t)\in U_{S,\,K,\,V}$, we take a path $\Gamma$ in $\Sigma$ which is composed of a ray from $e^{-\sqrt{-1}\alpha}\infty \,(0<\alpha <\theta)$ to a point $c \,(b<c<a)$ and a ray from $c$ to $e^{\sqrt{-1}\alpha}\infty$ such that $z$ 
is outside the region surrounded by $\Gamma$. 
Similarly, we take a path $\Gamma^{'}$ in $\Sigma$ which is composed of a ray from $e^{-\sqrt{-1}\alpha^{'}}\infty \,(\alpha <\alpha^{'} <\theta)$ to a point $c^{'} \,(b<{c}^{'}<c)$ and a ray from $c^{'}$ to $e^{\sqrt{-1}\alpha^{'}}\infty$ such that $z$ is inside
the region surrounded by $\Gamma^{'}$ as shown in Fig.~\ref{fig:path-in-Cz}.
\begin{figure}
\begin{center}
\scalebox{0.3}{\includegraphics[,]{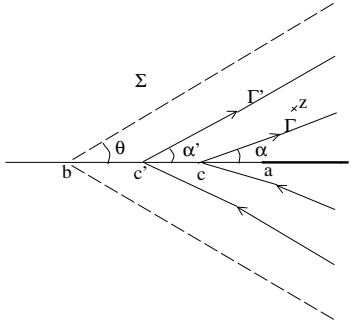}}\\
\caption{The paths in the $\mathbb{C}_z$-plane.}
\label{fig:path-in-Cz}
\end{center}
\end{figure}
Let $\theta_0 > 0$ be a sufficiently small positive real number, and set
$$
\widetilde{Y} := Y \times \{\eta \in \mathbb{C};\, |\arg{\eta}| < \theta_0\}
\subset Y \times \mathbb{C}_\eta.
$$
Define
$$
\begin{aligned}
G_{0}(z,\,w,\,t;\, \eta) &:= \frac{e^{\eta z}}{2\pi \sqrt{-1}} 
\int_{\Gamma }\frac{G(\zeta,\,w,\,t)e^{- \eta \zeta}}{\zeta -z}d\zeta, \\ 
G_{1}(z,\,w,\,t;\, \eta) &:= \frac{e^{\eta z}}{2\pi \sqrt{-1}}\int_{\Gamma^{'} }\frac{G(\zeta,\,w,\,t)e^{-\eta \zeta}}{\zeta -z}d\zeta.
\end{aligned}
$$
Then, since $G(z,\,w,\,t)$ belongs to $\hexpoy(U_{S,\,K,\,V})$, by modifying
the path of the integration, we can find a real-valued continuous function
$\rho_0(z,\,w,\,t)$ (resp. $\rho_1(z,\,w,\,t)$) on $U_{S,\,K,\,V}$
(resp. $U^{(0)}_{S,\,K,\,V}$) for which 
$G_{0}(z,\,w,\,t;\, \eta)$ (resp. $G_{1}(z,\,w,\,t;\, \eta)$)
is holomorphic on $\widetilde{U}_{S,\,K,\,V}$ 
(resp. $\widetilde{U}^{(0)}_{S,\,K,\,V}$), where
$$
\begin{aligned}
&
\widetilde{U}_{S,\,K,\,V} := 
\{(z,\,w,\,t;\,\eta) \in \widetilde{Y};\, (z,\,w,\,t) \in U_{S,\,K,\,V},\,\,
|\eta| > \rho_0(z,\,w,\,t)\}, \\
&
\widetilde{U}^{(0)}_{S,\,K,\,V} := 
\{(z,\,w,\,t;\,\eta) \in \widetilde{Y};\, (z,\,w,\,t) \in U^{(0)}_{S,\,K,\,V},\,\,
|\eta| > \rho_1(z,\,w,\,t)\}.
\end{aligned}
$$
Note that $\rho_0$ (resp. $\rho_1$) is bounded on any compact subset in
$U_{S,\,K,\,V}$ (resp. $U^{(0)}_{S,\,K,\,V}$).
We can also verify that $G_0$ and $G_1$ satisfy the following estimates: 
For any compact subsets $D_0$ in $U_{S,\,K,\,V}$ 
and $D_1$ in $U^{(0)}_{S,\,K,\,V}$, there exist
constants $C_{D_0}$ and $C_{D_1}$ for which we have
$$
\begin{aligned}
&|G_0(z,\,w,\,t;\,\eta)| \le C_{D_0} e^{\operatorname{Re}\eta z}
\qquad (z,\,w,\,t,\,\eta) \in 
\widetilde{U}_{S,\,K,\,V} \cap (D_0 \times \mathbb{C}_\eta),\\
&|G_1(z,\,w,\,t;\,\eta)| \le C_{D_1} e^{\operatorname{Re} \eta z}
\qquad (z,\,w,\,t,\,\eta) \in 
\widetilde{U}^{(0)}_{S,\,K,\,V} \cap (D_1 \times \mathbb{C}_\eta).
\end{aligned}
$$
Furthermore, by Cauchy's integral formula, we have
\begin{equation}\label{cauchy}
G(z,\,w,\,t) = G_{0}(z,\,w,\,t;\,\eta) - G_{1}(z,\,w,\,t;\, \eta).
\end{equation}

Now, under the assumption of $F$, we will show that 
$G_0$ identically vanishes and $G_1$ does not depend on
the variable $\eta$, i.e., $\dfrac{\partial G_1}{\partial \eta} \equiv 0$. We will show only
$\dfrac{\partial G_1}{\partial \eta} \equiv 0$ because $G_0 \equiv 0$ can be proved in the same way. 

Let $w^* \in 
(\mathbb{C}\setminus L_1) \times \dots \times (\mathbb{C}\setminus L_{n-1})$ 
and $t^* \in W$. 
Suppose $(w,\,t)$ to be sufficiently close 
to $(w^*,\,t^*)$. 
Then it follows from the assumption $F|_{U_{S,\,L,\,W}} = \sum_j F_j$ that 
$\dfrac{\partial G_{1}}{ \partial \eta}(z,\,w,\,t;\,\eta)$ is equal to
\begin{equation}
\begin{aligned}\label{G_{0}}
\frac{-e^{\eta z}}{ (2\pi \sqrt{-1})^{n}}
\sum_{j=0}^{n-1}
\int_{\Gamma' } \int_{\gamma_{1} \times \dots \times \gamma_{n-1}}\frac{ F_{j}(\zeta,\,\mu,\,t)e^{- \eta \zeta}\,\,d\mu d\zeta }{(\mu_{1}-w_{1}) \cdots (\mu_{n-1} - w_{n-1})}. \\
\end{aligned}
\end{equation}
Here we first modify the path $\gamma_j$ such that each $\gamma_j$ is in
$\mathbb{C} \setminus L_j$, which is possible because we take $(w,\,t)$ sufficiently
close to $(w^*,\,t^*)$, 
and then, we put $F = \sum_j F_j$ into the definition of $\dfrac{\partial G_1}{\partial \eta}$.
Furthermore we assume $|\eta|$ to be sufficiently large so that 
each integral converges absolutely.

Since, for $j \ge 1$, each $F_{j}(\zeta,\,\mu,\,t)$ 
is holomorphic on $U^{(j)}_{S,\,L,\,W} \cap Y$, we have
\begin{equation}
 \int_{\gamma_{1} \times \dots \times \gamma_{n-1}}\frac{ F_{j}(\zeta,\,\mu,\,t)\,\, d\mu }{(\mu_{1}-w_{1}) \cdots (\mu_{n-1} - w_{n-1})}  =0 \qquad(1\leq j \leq n-1).
\end{equation}
Hence $\dfrac{\partial G_{1}}{\partial \eta}(z,\,w,\,t;\, \eta)$ is equal to
\begin{equation}
\frac{-e^{ \eta z}}{ (2\pi \sqrt{-1})^{n}}  \int_{\gamma_{1} \times \dots \times \gamma_{n-1}}\frac{d\mu}{(\mu_{1}-w_{1}) \cdots (\mu_{n-1} - w_{n-1})} 
\int_{\Gamma'} F_{0}(\zeta,\,\mu,\,t)e^{- \eta \zeta} d\zeta.
\end{equation}
As $F_0$ is holomorphic on $U^{(0)}_{S,\,L,\,W} \cap Y$ and we can take $|\eta|$ large enough so that $e^{- \eta \zeta}$ compensates the exponential growth of $F_{0}$ at infinity, we have
$
\displaystyle\int_{\Gamma'} F_{0}(\zeta,\,\mu,\,t)e^{- \eta \zeta} d\zeta = 0
$
by rotating the path $\Gamma'$ to the positive real axis in the $\mathbb{C}_\zeta$-plane.
This entails $\dfrac{\partial G_{1}}{\partial \eta}(z,\,w,\,t;\,\eta) = 0$ when 
$(w,\,t)$ is sufficiently close to $(w^*,\,t^*)$ and $|\eta|$ is large enough, 
and thus, 
it follows from
the unique continuation property of a holomorphic function that
$\dfrac{\partial G_1}{\partial \eta}$ identically vanishes on $\widetilde{U}^{(0)}_{S,\,K,\,V}$.

By the same argument as above, we can also get $G_0 \equiv 0$ on 
$\widetilde{U}_{S,\,K,\,V}$,
and hence, we have obtained $G(z,\,w,\,t) = - G_{1}(z,\,w,\,t;\,\eta)$, which implies
that $G(z,\,w,\,t)$ is a holomorphic function of 
exponential type on  $U_{S,\,K,\,V}^{(0)}$. 
This completes the proof.
\end{proof}

We also prepare the lemma below, which is needed 
to show Propositions $\ref{p-lemma}$ and $\ref{cover}$.
\begin{lemma}\label{lemma3.1}
Let $S$ $($resp. $U$$)$ be a closed $($resp. an open$)$ subset in $\rcsptwo$, 
and let $K = K_1 \times \cdots \times K_{n-1} \subset \mathbb{C}^{n-1}$ 
be a product of closed subsets $K_j$ in $\mathbb{C}$ $(j=1,\dots,n-1)$.
Assume that $W$ and $V$ 
are Stein open subsets in $\mathbb{C}^{n-1}$ and $T$, respectively. 
Then we have
\begin{equation}
\mathrm{H}^{k}_{(S \cap U) \times (K \cap W)  \times V} (U \times W \times V,\,\hexpoy) = 0 \qquad (k \geq n+1).
\end{equation}
\end{lemma}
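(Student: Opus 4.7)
The plan is a standard local-cohomology computation via a relative \v{C}ech cover, reducing to Theorem \ref{dim-1}. Set $X := U \times W \times V$ and $Z := (S \cap U) \times (K \cap W) \times V$. Exploiting the product form of $K$, I would cover $X \setminus Z$ by the $n$ open subsets
\begin{equation*}
V_0 := (U \setminus S) \times W \times V, \qquad V_j := U \times \bigl(W \cap \pi_j^{-1}(\mathbb{C}\setminus K_j)\bigr) \times V \quad (j=1,\dots,n-1),
\end{equation*}
where $\pi_j : \mathbb{C}^{n-1}\to\mathbb{C}$ is the $j$-th coordinate projection, so that $\mathcal{V} := \{X,V_0,\dots,V_{n-1}\}$ is a relative open cover of the pair $(X, X\setminus Z)$.

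The key step is to check that $\mathcal{V}$ is a Leray cover for $\hexpoy$. Every nonempty intersection of members of $\mathcal{V}$ takes the form $U' \times W' \times V$ with $U' \in \{U,U\setminus S\}$ open in $\rcsptwo$ and $W' = W \cap \bigcap_{j\in I}\pi_j^{-1}(\mathbb{C}\setminus K_j)$ for some $I \subset \{1,\dots,n-1\}$. Each $\pi_j^{-1}(\mathbb{C}\setminus K_j)$ is pseudo-convex in $\mathbb{C}^{n-1}$ (the plurisubharmonic function $w\mapsto -\log\operatorname{dist}(w_j,K_j)$ depending only on the single variable $w_j$ witnesses it), and a finite intersection of open pseudo-convex subsets of $\mathbb{C}^{n-1}$ is pseudo-convex, so $W' \times V$ is Stein. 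Theorem \ref{dim-1} then yields $\mathrm{H}^k(U'\times W'\times V,\hexpoy)=0$ for $k\neq 0$. Consequently
\begin{equation*}
\mathrm{H}^k_Z(X,\hexpoy) \cong \check{\mathrm{H}}^k\bigl(\mathcal{V}\bmod \{V_0,\dots,V_{n-1}\},\hexpoy\bigr).
\end{equation*}

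To finish, a $k$-cochain of this relative complex involves a $(k+1)$-tuple from $\mathcal{V}$ at least one of whose members equals $X$, hence is indexed by $k$-element subsets of $\{V_0,\dots,V_{n-1}\}$; since that family has only $n$ members, the complex is zero in degrees $\geq n+1$, and so is the cohomology. The principal obstacle is the Leray verification, and specifically the pseudo-convexity of the intersections $W'$: it is precisely the product structure of $K = K_1 \times \cdots \times K_{n-1}$ that lets us carve out $\pi_j^{-1}(K_j)$ as individually pseudo-convex complements, so that finite intersection preserves Steinness of $W'$.
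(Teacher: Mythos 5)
Your proof is correct and follows essentially the same route as the paper: the identical relative Leray cover $\{U\times W\times V,\ (U\setminus S)\times W\times V,\ U\times(W\cap\pi_j^{-1}(\mathbb{C}\setminus K_j))\times V\}$, the appeal to Theorem \ref{dim-1} to verify the Leray property, and the observation that a cover by $n+1$ open sets forces the relative \v{C}ech complex to vanish in degrees $\geq n+1$. The only difference is that you spell out the pseudo-convexity of the intersections, which the paper leaves implicit.
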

\begin{proof}
Let us consider the representation of 
cohomology groups by a relative Leray covering for the sheaf $\hexpoy$.
Define the canonical projection $\pi_j: \mathbb{C}^{n-1} \to \mathbb{C}$ by
$(w_1,\dots,w_j, \dots, w_{n-1}) \to w_j$, and set
\begin{equation}
\begin{aligned}
&P :=  U \times W \times V ,\\
&P_{0} :=(U \setminus S) \times W \times V,\\
&P_{j}  := U \times (\pi^{-1}_j(\mathbb{C} \setminus K_j) \cap W) \times V
\quad (1 \leq j \leq n-1).
\end{aligned}
\end{equation}
Then  $\{ P,\,P_{0},\,\dots,\,P_{n-1}\}$ and $\{P_{0},\,\dots,\,P_{n-1}\}$ become  
a relative Leray covering of the pair  
$(P,\,P\setminus (S\times (K \cap W) \times V))$ for the sheaf $\hexpoy$ 
by Theorem \ref{dim-1}. 
As the number of open sets in the covering is $n+1$, it is evident that the $k$-th cohomology group vanishes for $k \geq n+1$.
\end{proof}
\begin{remark}
{\normalfont
In the proof of the above lemma, if $S = [a,\,+\infty]$ with $a \in \mathbb{R}$ and 
$U=\rcsptwo$,
then Theorem \ref{cvs} is sufficient to show the lemma because $\rcsptwo\setminus S$ is
regular at infinity in this case. For a general closed subset $S$ such as 
$\{+\infty\}$, however, we really need Theorem \ref{dim-1} which is a
specific feature in the one dimensional case.
}
\end{remark}

As an immediate consequence of the Martineau type theorem for the sheaf $\hexpoy$, 
we have the following proposition. 
\begin{proposition}\label{p-lemma}
Let $S=[a,\,+\infty]$ $(a\in\mathbb{R}\cup \{+\infty\})$ be a compact subset in $\overline{\mathbb{R}}$, and let $K=K_{1}\times \cdots \times K_{n-1}$ be a closed polydisc  in $\mathbb{C}^{n-1}$. Assume that $V$ is a Stein open subset in $T$. Then we have
\begin{equation}\label{eq-lemma}
\mathrm{H}^{k}_{S\times K\times V}(\rcsptwo \times \mathbb{C}^{n-1} \times V,\,\hexpoy) = 0 \qquad  ( k\ne n).
\end{equation}
\end{proposition}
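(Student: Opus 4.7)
The vanishing for $k \ge n+1$ is immediate from Lemma~\ref{lemma3.1}, so I focus on the range $0 \le k \le n-1$. My strategy is to represent the local cohomology as the cohomology of the same relative Leray complex used in the proof of Theorem~\ref{martineau}, and then establish exactness in each degree $k < n$ by induction on $n$.

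Using the relative Leray covering $(\mathcal{U},\mathcal{U}')$ of the pair $(U,\, U \setminus (S \times K \times V))$ constructed in the proof of Theorem~\ref{martineau}, whose Leray property for $\hexpoy$ follows from Theorem~\ref{dim-1} together with the Remark after Theorem~\ref{cvs}, one represents the local cohomology in question as the cohomology of the length-$(n{+}1)$ complex
\[
0 \to \hexpoy(U) \to \bigoplus_{j=0}^{n-1} \hexpoy(U_j) \to \cdots \to \bigoplus_{j=0}^{n-1}\hexpoy(U^{(j)}_{S,K,V}) \to \hexpoy(U_{S,K,V}) \to 0.
\]
The base case $n=1$ is immediate: vanishing for $k \ge 2$ is Lemma~\ref{lemma3.1}, while $k = 0$ follows from unique continuation, since a section of $\hexpoy$ supported in $S \times V$ restricts to a holomorphic function on $\Sigma \times V$ that vanishes on the open dense complement of $(S \cap \mathbb{R}) \times V$, and hence vanishes identically.

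For the inductive step I exploit that the $n$ small opens $U_0,\ldots,U_{n-1}$ are built from $U$ by removing one codimension-one constraint in each of the independent coordinate directions $z, w_1,\ldots,w_{n-1}$, so the relative Čech complex has the shape of a Koszul-type tensor product of $n$ two-term complexes. Given a Čech cocycle of degree $k < n$, at least one coordinate direction $j_0$ is absent from its indexing; applying the Cauchy integral formula in $w_{j_0}$ (for $j_0 \ge 1$), or the Laplace-type integrals $G_0, G_1$ used in the proof of Theorem~\ref{martineau} (for $j_0 = 0$), yields an explicit chain homotopy contracting the cocycle to zero modulo a sub-cocycle involving one fewer variable, to which the inductive hypothesis applies. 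The principal difficulty is preserving the exponential growth in the $z$-direction throughout the construction; this is handled exactly as in the proof of Theorem~\ref{martineau}, by the path-deformation argument producing the holomorphic extensions $G_0$ and $G_1$.
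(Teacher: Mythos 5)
Your treatment of the easy ranges is fine: $k\ge n+1$ via Lemma~\ref{lemma3.1}, and the base case $n=1$ via unique continuation, are exactly what the paper does, and the representation of the local cohomology by the relative Leray complex of $(\mathcal{U},\mathcal{U}')$ is legitimate. The gap is the inductive step for $1\le k\le n-1$, which is where all the content of the proposition lies, and your sketch does not carry it out. First, the combinatorial claim that a degree-$k$ cocycle with $k<n$ has ``at least one coordinate direction absent from its indexing'' is only true component by component: already a $1$-cocycle $(f_j)_{0\le j\le n-1}$ involves every direction, and different components omit different indices, so there is no well-defined global $j_0$ along which to apply your contraction. Second, and more seriously, an unconditional chain homotopy of the kind you describe would also kill $\mathrm{H}^n$, which is nonzero; so the construction must use the cocycle condition in an essential way, precisely where the $z$-direction (the set $S=[a,+\infty]$ touching infinity) is involved. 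In the proof of Theorem~\ref{martineau}, the conclusions $G_0\equiv 0$ and the extendability of $G_1$ are deduced from the hypothesis that $F$ is a coboundary over the smaller domain $U_{S,L,W}$ --- it is an injectivity statement, not an unconditional decomposition --- so $G_0,G_1$ cannot be reused as an unconditional homotopy operator for the direction $j_0=0$.

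The paper closes this gap differently: it runs the induction on $n$ through the long exact sequence of local cohomology attached to the pair $K''\subset\mathbb{C}$ in the last $w$-variable, absorbing the factors $\mathbb{C}$ and $\mathbb{C}\setminus K''$ into the space $T$ of holomorphic parameters (this is one reason the sheaf carries holomorphic parameters at all). The induction hypothesis kills degrees $k\le n-2$, and at the critical degree $k=n-1$ the needed input is exactly injectivity of the restriction morphism
$\mathrm{H}^{n-1}_{S\times K'\times\mathbb{C}\times V}\to \mathrm{H}^{n-1}_{S\times K'\times(\mathbb{C}\setminus K'')\times V}$,
which is Theorem~\ref{martineau} applied in dimension $n-1$ with parameter space $\mathbb{C}\times T$. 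Your sketch treats all degrees $k<n$ uniformly and never isolates $k=n-1$, which is a sign that the actual difficulty has not been engaged. To repair the argument, either supply the explicit partial homotopy with the bookkeeping above and show exactly where the cocycle condition enters in the $z$-direction, or, more simply, invoke Theorem~\ref{martineau} as an injectivity statement inside the long-exact-sequence induction, as the paper does.
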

\begin{proof}
For $k \geq n+1$, we obtain the assertion by Lemma \ref{lemma3.1}.
Hence it remains to prove that the $k$-th cohomology group vanishes for $k \leq n-1$.
We use the induction on $n$.
In the case of $n=1$, the claim to be proved is 
\begin{equation}\label{eq-a}
\mathrm{H}^{0}_{S\times V}(\rcsptwo \times V,\,\hexpoy)= \Gamma_{{S\times V}}(\rcsptwo \times V,\,\hexpoy)=0.
\end{equation}
Obviously, we have $(\ref{eq-a})$ by the unique continuation property 
of a holomorphic function.
Suppose that we have, for any cylindrical compact set $K^{'}\subset \mathbb{C}^{n-2}$ and any Stein open subset $V$,
\begin{equation}
\mathrm{H}^{k}_{S\times K^{'}\times V}(\rcsptwo \times \mathbb{C}^{n-2} \times V,\,\hexpoy) = 0 \qquad  ( k \leq n-2).
\end{equation}
Under the assumption, let us show
\begin{equation}\label{mok}
\mathrm{H}^{k}_{S\times K^{'}\times K^{''} \times V}(\rcsptwo \times \mathbb{C}^{n-1} \times V,\,\hexpoy) = 0 \qquad  ( k\leq n-1),
\end{equation}
where $K^{''}$ is an arbitrary compact subset in $\mathbb{C}$.
We consider the following long exact sequence of  cohomology groups.
\begin{equation}\label{seq-1}
\begin{aligned}
&\rightarrow \mathrm{H}^{k}_{S \times K^{'} \times K^{''} \times V}(\rcsptwo \times \mathbb{C}^{n-1}  \times V,\,\hexpoy)
\rightarrow \mathrm{H}^{k}_{S \times {K}^{'}\times \mathbb{C} \times V}(\rcsptwo \times \mathbb{C}^{n-2}\times \mathbb{C}  \times V,\,\hexpoy) \\
&\overset{\iota}{\rightarrow} \mathrm{H}^{k}_{S \times {K}^{'} \times (\mathbb{C} \setminus K^{''}) \times V}(\rcsptwo \times \mathbb{C}^{n-2} \times (\mathbb{C} \setminus K^{''}) \times V,\,\hexpoy) 
\rightarrow. 
\end{aligned}
\end{equation}
It follows from the induction hypothesis that we have
\begin{equation}
\begin{aligned}
&\mathrm{H}^{k}_{S \times {K}^{'}\times \mathbb{C} \times V}(\rcsptwo \times \mathbb{C}^{n-2}\times \mathbb{C} \times V,\,\hexpoy)
=0,\\
&\mathrm{H}^{k}_{S \times {K}^{'} \times (\mathbb{C} \setminus K^{''}) \times V}(\rcsptwo \times \mathbb{C}^{n-2} \times (\mathbb{C} \setminus K^{''}) \times V,\,\hexpoy) =0\quad ( k \leq n-2).
\end{aligned}
\end{equation}
Therefore we obtain $(\ref{mok})$ for $ k \leq n-2$.
On account of the Martineau type theorem for the sheaf $\hexpoy$,  the morphism $\iota$ in $(\ref{seq-1})$ for $k=n-1$ is injective.
Since the exact sequence
\begin{equation}
\begin{aligned}
&0\rightarrow \mathrm{H}^{n-1}_{S \times K^{'} \times K^{''} \times V}(\rcsptwo \times \mathbb{C}^{n-1}  \times V,\,\hexpoy)
\rightarrow \mathrm{H}^{n-1}_{S \times {K}^{'}\times \mathbb{C} \times V}(\rcsptwo \times \mathbb{C}^{n-2}\times \mathbb{C}  \times V,\,\hexpoy)\\
&\overset{\iota}{\rightarrow} \mathrm{H}^{n-1}_{S \times {K}^{'} \times (\mathbb{C} \setminus K^{''}) \times V}(\rcsptwo \times \mathbb{C}^{n-2} \times (\mathbb{C} \setminus K^{''}) \times V,\,\hexpoy)
\end{aligned}
\end{equation}
implies $ \mathrm{H}^{n-1}_{S \times K^{'} \times K^{''} \times V}(\rcsptwo \times \mathbb{C}^{n-1}  \times V,\,\hexpoy)=0$,
the assertion holds for $k\leq n-1$. 
Hence the assertion is true for any $n \geq 1$ by the  induction.
\end{proof}

\begin{corollary}\label{cor}
Let $S=[a,\,+\infty]$ $(a\in\mathbb{R}\cup \{+\infty\})$ be a compact subset in $\overline{\mathbb{R}}$, and let $K=K_{1}\times \cdots \times K_{n-1} \subset L=L_1\times \cdots \times L_{n-1}$ be a pair of closed polydiscs in $\mathbb{C}^{n-1}$. 
Assume that $V$  is a Stein open subset in $T$. 
Then we have
\begin{equation}\label{eq-corollary}
\mathrm{H}^{k}_{S\times (L\setminus K)\times V}(\rcsptwo \times \mathbb{C}^{n-1} \times V,\,\hexpoy) =0 \qquad (k\ne n).
\end{equation}
\end{corollary}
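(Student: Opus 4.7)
The plan is to deduce the corollary from the long exact sequence of local cohomology attached to the pair of closed subsets $Z_1 := S\times K \times V \subset Z_2 := S\times L \times V$ in the ambient space $W := \rcsptwo \times \mathbb{C}^{n-1} \times V$, combined with the two preceding results: Proposition \ref{p-lemma} for vanishing at the endpoints, and Theorem \ref{martineau} for injectivity in the critical degree. Since $S \times (L \setminus K) \times V = Z_2 \setminus Z_1$, the standard triangle of local cohomology along $Z_1 \subset Z_2$ yields
\begin{equation*}
\cdots \to \mathrm{H}^k_{Z_1}(W,\hexpoy) \to \mathrm{H}^k_{Z_2}(W,\hexpoy) \to \mathrm{H}^k_{Z_2\setminus Z_1}(W,\hexpoy) \to \mathrm{H}^{k+1}_{Z_1}(W,\hexpoy) \to \cdots,
\end{equation*}
where the first arrow is the map induced by enlargement of support.

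By Proposition \ref{p-lemma} applied to each of the closed polydiscs $K$ and $L$, we have $\mathrm{H}^k_{Z_1}(W,\hexpoy) = 0$ and $\mathrm{H}^k_{Z_2}(W,\hexpoy) = 0$ whenever $k \ne n$. Therefore, for every degree $k$ with $k \le n-2$ or $k \ge n+1$, both terms flanking $\mathrm{H}^k_{Z_2\setminus Z_1}(W,\hexpoy)$ in the displayed sequence vanish, which immediately gives $\mathrm{H}^k_{Z_2\setminus Z_1}(W,\hexpoy) = 0$ in those degrees.

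The only remaining case is $k = n-1$, in which the exact sequence collapses to
\begin{equation*}
0 \to \mathrm{H}^{n-1}_{Z_2\setminus Z_1}(W,\hexpoy) \to \mathrm{H}^n_{Z_1}(W,\hexpoy) \xrightarrow{\iota} \mathrm{H}^n_{Z_2}(W,\hexpoy),
\end{equation*}
so that the vanishing reduces to injectivity of $\iota$. The map $\iota$ is the one induced by the inclusion $Z_1 \subset Z_2$, that is, by enlarging the support polydisc from $K$ to $L$ with the ambient parameter space $V$ unchanged. But this is exactly the restriction morphism appearing in Theorem \ref{martineau}, specialised to the case $W = V$ in which the parameter open set does not shrink. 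Since Theorem \ref{martineau} asserts the injectivity of that morphism, we conclude $\mathrm{H}^{n-1}_{Z_2\setminus Z_1}(W,\hexpoy) = 0$ as well, finishing the proof.

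The main subtlety to check is simply that the natural inclusion-of-supports map in the long exact sequence does coincide, under the identifications implicit in the Leray covering representation, with the restriction morphism of Theorem \ref{martineau} in the case $W = V$; once this identification is in place the argument is entirely formal.
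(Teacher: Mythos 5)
Your proposal is correct and follows essentially the same route as the paper: the long exact sequence of local cohomology for the pair $S\times K\times V \subset S\times L\times V$, vanishing in degrees $k\neq n$ from Proposition \ref{p-lemma}, and injectivity of the support-enlargement map $\mathrm{H}^{n}_{S\times K\times V}\to\mathrm{H}^{n}_{S\times L\times V}$ from Theorem \ref{martineau} (with $W=V$) to handle $k=n-1$. The identification you flag at the end is exactly the point the paper also relies on implicitly, so nothing is missing.
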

\begin{proof}
We consider the following long exact sequence of  cohomology groups
\begin{equation}\label{39}
\begin{aligned}
&\rightarrow \mathrm{H}^{k}_{S \times K \times V}(\rcsptwo \times \mathbb{C}^{n-1} \times V,\,\hexpoy)
\rightarrow \mathrm{H}^{k}_{S \times L \times V}(\rcsptwo \times \mathbb{C}^{n-1} \times V,\,\hexpoy)\\
&\rightarrow \mathrm{H}^{k}_{S \times (L \setminus K) \times V}(\rcsptwo \times \mathbb{C}^{n-1} \times V,\,\hexpoy)
\rightarrow. 
\end{aligned}
\end{equation}
By Proposition \ref{p-lemma}, we have
\begin{equation}\label{40}
\mathrm{H}^{k}_{S \times K \times V}(\rcsptwo \times \mathbb{C}^{n-1} \times V,\,\hexpoy)
= \mathrm{H}^{k}_{S \times L \times V}(\rcsptwo \times \mathbb{C}^{n-1} \times V,\,\hexpoy) =0 \quad (k\neq n).
\end{equation}
Hence we obtain $($\ref{eq-corollary}$)$ for $k\neq n-1,\,n$. 
Moreover, we have the following exact sequence of  cohomology groups by $(\ref{39})$ and $(\ref{40})$.
\begin{equation}
\begin{aligned}
&0 \rightarrow  \mathrm{H}^{n-1}_{S \times (L \setminus K) \times V}(\rcsptwo \times \mathbb{C}^{n-1} \times V,\,\hexpoy)\rightarrow \mathrm{H}^{n}_{S \times K \times V}(\rcsptwo \times \mathbb{C}^{n-1} \times V,\,\hexpoy)\\
&\overset{\iota}{\rightarrow} \mathrm{H}^{n}_{S \times L \times V}(\rcsptwo \times \mathbb{C}^{n-1} \times V,\,\hexpoy). 
\end{aligned}
\end{equation}
Since the morphism $\iota$ is injective by Theorem \ref{martineau}, we have the corollary.
\end{proof}

Corollary \ref{cor} can be extended to a pair of two analytic polyhedra $K\subset L$.
Let us first recall the definition of an analytic polyhedron. 
\begin{definition}
Let $U$ be a domain in $\mathbb{C}^{n}$.  A compact subset $D$ in $U$ defined by
\begin{equation}
D=\{ z\in U ;\, |F_{1}(z)| \leq 1,\,\dots,\, |F_{N}(z)| \leq 1 \}
\end{equation}
with some finitely many $F_{1},\,\dots,\,F_{N} \in \mathcal{O}_{\mathbb{C}^{n}}(U)$ is called an analytic polyhedron of $U$. 
\end{definition}

\begin{theorem}\label{polyhedra}
Let $S=[a,\,+\infty]$ $(a\in\mathbb{R}\cup \{+\infty\})$ be a compact subset in $\overline{\mathbb{R}}$.
Let $K$ and $L$ be two compact analytic polyhedra in $\mathbb{C}^{n-1}$, and let $V$ be a Stein open subset in $T$. Then
\begin{equation}\label{poly}
\mathrm{H}^{k}_{S\times (L\setminus K)\times V}(\rcsptwo\times \mathbb{C}^{n-1} \times V,\,\hexpoy) =0\qquad (0\leq k \leq n-1). 
\end{equation}
\end{theorem}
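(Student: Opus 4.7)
The proof extends Corollary \ref{cor} from closed polydiscs to analytic polyhedra by the Oka graph-embedding trick. First I would reduce to the case $K \subset L$: replacing $K$ with $K \cap L$ yields an analytic polyhedron (by concatenating defining inequalities), and $L \setminus K = L \setminus (K \cap L)$. By a standard application of Oka's theorem on holomorphically convex compacta in a Stein manifold, one may then arrange that $K$ and $L$ are defined in a common Stein open neighborhood $W_0 \supset L$, with $L = \{w \in W_0 : |G_j(w)| \le 1,\ j = 1, \ldots, M\}$ and $K = L \cap \{|F_1| \le 1, \ldots, |F_N| \le 1\}$ for holomorphic functions $F_i, G_j \in \mathcal{O}(W_0)$.

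The main tool is the Oka embedding $\Phi : W_0 \hookrightarrow W_0 \times \mathbb{C}^{N+M}$ defined by $\Phi(w) = (w, F_1(w), \ldots, F_N(w), G_1(w), \ldots, G_M(w))$; its image is a closed complex submanifold cut out by the regular sequence $u_i - F_i(w),\ v_j - G_j(w)$, and under $\Phi$ the polyhedra $K$ and $L$ become the intersections of the graph with closed polydiscs in $\mathbb{C}^{n-1+N+M}$. Passing to the enlarged ambient space $\hat{Y}^{+} := \rcsptwo \times \mathbb{C}^{n-1+N+M} \times T$ with its sheaf $\mathcal{O}^{\exp}_{\hat{Y}^{+}}$ of holomorphic functions of exponential type, the Koszul resolution of $\tilde{\Phi}_{*} \hexpoy$ by $\mathcal{O}^{\exp}_{\hat{Y}^{+}}$ transfers the local-cohomology problem to polydisc supports in higher dimension.

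Combining excision with a hypercohomology spectral-sequence argument that collapses because Proposition \ref{p-lemma} in the higher dimension concentrates each $E_1$-term in degree $n + N + M$, I would prove two analytic-polyhedron analogues of the polydisc results: \textbf{(a)} $\mathrm{H}^{k}_{S \times P \times V}(\rcsptwo \times \mathbb{C}^{n-1} \times V, \hexpoy) = 0$ for $k \le n - 1$ and any compact analytic polyhedron $P$, and \textbf{(b)} for $K \subset L$ compact analytic polyhedra, the canonical restriction morphism $\mathrm{H}^{n}_{S \times K \times V}(\cdots) \to \mathrm{H}^{n}_{S \times L \times V}(\cdots)$ is injective, the latter extending Theorem \ref{martineau}. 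Combining (a) and (b) via the long exact sequence of local cohomology for the pair $(K, L)$, exactly as in the proof of Corollary \ref{cor}, yields $\mathrm{H}^{k}_{S \times (L \setminus K) \times V}(\cdots, \hexpoy) = 0$ for $0 \le k \le n - 1$.

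The main obstacle is setting up the Koszul resolution rigorously at the level of $\hexpoy$. Since the regular-sequence elements $u_i - F_i(w),\ v_j - G_j(w)$ do not involve the variable $z$ carrying the exponential growth, multiplication by them preserves the exponential growth condition, so the classical Koszul exactness lifts to $\mathcal{O}^{\exp}_{\hat{Y}^{+}}$ and the spectral sequence collapses as in the polydisc case.
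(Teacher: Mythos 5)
Your proposal is correct and follows essentially the same route as the paper: a graph (Oka) embedding of $\hat{Y}$ into $\rcsptwo\times\mathbb{C}^{n-1+l}\times T$ sending the polyhedra to slices of closed polydiscs, a Koszul resolution of the pushforward of $\hexpoy$ by the exponential-type sheaf upstairs (exact precisely because the relations $\tilde{w}_j-F_j(w)$ do not involve the variable $z$ carrying the growth condition), and reduction to the polydisc statements. The only organizational difference is the endgame: the paper applies the resolution directly to the support $S\times(\tilde{L}\setminus\tilde{K})\times V$ and invokes Corollary \ref{cor} together with a dimension-shifting lemma, whereas you first re-derive the polyhedron analogues of Proposition \ref{p-lemma} and Theorem \ref{martineau} and then rerun the long exact sequence argument of Corollary \ref{cor}; the two are equivalent.
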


Before entering into the proof of Theorem $\ref{polyhedra}$, we recall two well-known lemmas. 
\begin{lemma}[\cite{kaneko}, Corollary\, 5.3.7]\label{inverse} 
Let $X$ be a topological space, and let $S$ be a locally closed subset in $X$.
Let $0 \rightarrow \mathcal{L}_{n} \rightarrow \cdots \rightarrow \mathcal{L}_{1} \rightarrow \mathcal{L}_{0} \rightarrow \mathcal{F} \rightarrow 0$ be an exact sequence of sheaves on $X$.
If
$$
\mathrm{H}^{k}_{S}(X,\,\mathcal{L}_{j}) =0 \qquad (r+j \leq k\leq N+j, \quad j=0,\,\dots,n),
$$
then
$$
\mathrm{H}^{k}_{S}(X,\,\mathcal{F})=0\qquad(r\leq k\leq N).
$$
\end{lemma}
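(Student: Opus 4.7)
The plan is to split the long resolution into short exact sequences and then shift responsibility for the vanishing up the resolution by repeatedly applying the long exact sequence in local cohomology. Concretely, set $\mathcal{Z}_{0} := \mathcal{F}$ and, for $1 \le j \le n$, let $\mathcal{Z}_{j}$ denote the image of $\mathcal{L}_{j} \to \mathcal{L}_{j-1}$ (equivalently the kernel of $\mathcal{L}_{j-1} \to \mathcal{L}_{j-2}$, with the convention $\mathcal{L}_{-1} := \mathcal{F}$). Exactness of the given sequence provides short exact sequences
$$
0 \to \mathcal{Z}_{j+1} \to \mathcal{L}_{j} \to \mathcal{Z}_{j} \to 0 \qquad (0 \le j \le n-1),
$$
together with the identification $\mathcal{Z}_{n} = \mathcal{L}_{n}$.

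Each such short exact sequence gives rise to a long exact sequence of local cohomology,
$$
\cdots \to \mathrm{H}^{k}_{S}(X,\, \mathcal{L}_{j}) \to \mathrm{H}^{k}_{S}(X,\, \mathcal{Z}_{j}) \to \mathrm{H}^{k+1}_{S}(X,\, \mathcal{Z}_{j+1}) \to \mathrm{H}^{k+1}_{S}(X,\, \mathcal{L}_{j}) \to \cdots,
$$
and I would use it to prove, by downward induction on $j$ from $n$ to $0$, the claim that $\mathrm{H}^{k}_{S}(X,\, \mathcal{Z}_{j}) = 0$ whenever $r+j \le k \le N+j$. The base case $j = n$ is immediate from $\mathcal{Z}_{n} = \mathcal{L}_{n}$ and the hypothesis. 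For the inductive step, assume the claim holds at level $j+1$; then in the range $r+j \le k \le N+j$ the hypothesis kills $\mathrm{H}^{k}_{S}(X,\, \mathcal{L}_{j})$, while the inductive hypothesis at $k' = k+1$ kills $\mathrm{H}^{k+1}_{S}(X,\, \mathcal{Z}_{j+1})$, so the long exact sequence forces $\mathrm{H}^{k}_{S}(X,\, \mathcal{Z}_{j}) = 0$. Specializing to $j = 0$ gives $\mathrm{H}^{k}_{S}(X,\, \mathcal{F}) = 0$ for $r \le k \le N$.

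There is no genuine obstacle here: the argument is a standard devissage, and the only point requiring some care is the index bookkeeping, namely that the shift $j \mapsto j+1$ in the syzygy exactly matches the shift $k \mapsto k+1$ coming from the connecting homomorphism, which is precisely why the hypothesis has been stated with the window $[r+j,\, N+j]$ sliding with $j$. The chosen indexing of $\mathcal{Z}_{j}$ makes this alignment transparent and the induction mechanical.
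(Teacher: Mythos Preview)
Your argument is correct; this is exactly the standard d\'evissage that proves the lemma, and the index bookkeeping is handled cleanly. Note that the paper does not give its own proof of this statement---it is quoted as Corollary~5.3.7 of Kaneko's book---so there is nothing to compare against beyond observing that your short-exact-sequence splitting and downward induction is precisely the expected textbook argument.
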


\begin{lemma}[\cite{schapira}, Proposition\,B.4.2]\label{schapiram}
Let $M$ be a module, and let $\phi_{1},\,\dots,\,\phi_{p}$ 
be a family of commuting endomorphisms of $M$. 
Let $\mathcal{M}$ be a Koszul complex associated to the sequence $(\phi_{1},\,\dots,\,\phi_{p})$.
Assume that, for each $1\leq j \leq p$, $\phi_{j}$ is  injective as an endomorphism of the module $\dfrac{M}{\sum^{j-1}_{i=1}\phi_{i}(M)}$.
Then we have
\begin{equation}
 \mathrm{H}^{j}(\mathcal{M}) =
        \left\{
        \begin{array}{ll}
                0 \qquad & (j\neq p), \\
                \\
            \dfrac{M}{ \sum_{i=1}^{p}\phi_{i}(M)}  \qquad & (j=p).
        \end{array}
        \right.
\end{equation}
\end{lemma}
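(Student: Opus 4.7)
The plan is to prove the statement by induction on $p$, using the well-known description of the Koszul complex of a sequence $(\phi_1,\dots,\phi_p)$ as the mapping cone of $\phi_p$ acting on the Koszul complex of $(\phi_1,\dots,\phi_{p-1})$.

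For the base case $p=1$, the Koszul complex $\mathcal{M}$ is simply $0 \to M \xrightarrow{\phi_1} M \to 0$ (with the second copy of $M$ in degree $1$). Its $0$-th cohomology is $\ker(\phi_1)$, which vanishes by the injectivity hypothesis, and its $1$st cohomology is $M/\phi_1(M)$, exactly the claimed value.

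For the inductive step, I would use the standard short exact sequence of complexes expressing the Koszul complex $\mathcal{M}_p := K^\bullet(\phi_1,\dots,\phi_p;M)$ as the mapping cone of the chain map $\phi_p \colon \mathcal{M}_{p-1} \to \mathcal{M}_{p-1}$, namely
\begin{equation*}
0 \longrightarrow \mathcal{M}_{p-1} \longrightarrow \mathcal{M}_p \longrightarrow \mathcal{M}_{p-1}[-1] \longrightarrow 0,
\end{equation*}
where the connecting morphism in the associated long exact sequence of cohomology is, up to sign, the map induced by $\phi_p$. By the induction hypothesis applied to the commuting endomorphisms $\phi_1,\dots,\phi_{p-1}$ of $M$ (whose hypotheses are inherited directly from those of the full sequence), one has $\mathrm{H}^j(\mathcal{M}_{p-1})=0$ for $j\neq p-1$ and $\mathrm{H}^{p-1}(\mathcal{M}_{p-1}) = M/\sum_{i=1}^{p-1}\phi_i(M)$. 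The long exact sequence therefore collapses to
\begin{equation*}
0 \to \mathrm{H}^{p-1}(\mathcal{M}_p) \to \frac{M}{\sum_{i=1}^{p-1}\phi_i(M)} \xrightarrow{\;\overline{\phi_p}\;} \frac{M}{\sum_{i=1}^{p-1}\phi_i(M)} \to \mathrm{H}^p(\mathcal{M}_p) \to 0,
\end{equation*}
with all other cohomology groups of $\mathcal{M}_p$ vanishing automatically. The injectivity of $\overline{\phi_p}$ (which is precisely the $j=p$ case of the hypothesis) forces $\mathrm{H}^{p-1}(\mathcal{M}_p)=0$, while the cokernel identification yields $\mathrm{H}^p(\mathcal{M}_p) = M/\sum_{i=1}^p\phi_i(M)$, as required.

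The main technical point to be careful about is the identification of the connecting morphism in the long exact sequence with the endomorphism induced by $\phi_p$ on the cohomology of $\mathcal{M}_{p-1}$. This is a standard fact about the mapping cone construction, but must be verified with the sign/shift conventions used in the Koszul complex here; once it is in place, the rest of the argument is purely formal manipulation of the long exact sequence.
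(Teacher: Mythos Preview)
Your argument is correct and is precisely the standard inductive proof via the mapping-cone description of the Koszul complex. Note, however, that the paper does not give its own proof of this lemma: it is quoted as Proposition~B.4.2 of \cite{schapira} and used as a black box in the proof of Theorem~\ref{polyhedra}. There is thus nothing in the paper to compare your proof against; what you have written is essentially the proof one finds in the cited reference.
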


\noindent
\begin{proof}Without loss of generality we can assume that $K$ is contained in $L$.
Hence,  two analytic polyhedra $L$ and $K$ can be expressed as
\begin{equation}
\begin{aligned}
L&=\left\{ w \in \mathbb{C}^{n-1}\,;\, |F_{1} (w) | \leq 1,\cdots,\,|F_{l^{'}} (w) | \leq 1  \right\},\\
K&=\left\{ w \in \mathbb{C}^{n-1}\,;\, |F_{1} (w) | \leq 1, \cdots,|F_{l^{'}} (w) | \leq 1, |F_{l^{'}+1} (w) | \leq 1,\,\cdots, |F_{l} (w) | \leq 1   \right\},
\end{aligned}
\end{equation}
where $F_{1},\,\dots,,\,F_{l}$ are entire functions on $\mathbb{C}^{n-1}$.
We may assume that $L$ is contained in the polydisc of the radius $r$ by the boundedness of $L$. 
Set
\begin{equation}
R:= \max \left\{ 1,\,\, \underset{w\in L}{\sup} |F_{l^{'}+1}(w)|, \cdots, \underset{w\in L}{\sup} |F_{l}(w)|, \,\, r\right\}.
\end{equation}
Let $Z :=\mathbb{C}_z \times \mathbb{C}^{n-1}_{w}\times \mathbb{C}^{l}_{\tilde{w}} \times T_{t}$ and
$\hat{Z} :=\rcsptwo_{z}\times \mathbb{C}^{n-1}_{w}\times \mathbb{C}^{l}_{\tilde{w}} \times T_{t}$.
We consider the closed embedding $\Psi : \hat{Y} \rightarrow \hat{Z}$ defined by 
\begin{equation}
\Psi (z,\,w,\,t) := (z,\, w,\,F_{1}(w),\,\dots,\,F_{l^{'}}(w),\,\dots,\,F_{l}(w),\,t).
\end{equation}
Also set
\begin{equation}
\tilde{L} := \left\{
\begin{array}{clc}
{}& \mid w_{1}\mid \leq R,\,\dots,\, |w_{n-1}| \leq R,\\
(w,\,\tilde{w}) \in \mathbb{C}^{n-1}\times \mathbb{C}^{l}\, ;& \mid \tilde{w}_{1} \mid \leq 1,\,\dots,\, \mid \tilde{w}_{l^{'}} \mid \leq1,  \\
{}& \mid \tilde{w}_{l^{'} +1} \mid \leq R, \dots,\, \mid \tilde{w}_{l} \mid \leq R
\end{array}
\right\}
\end{equation}
and
\begin{equation}
\tilde{K} := \left\{
\begin{array}{clc}
{}& \mid w_{1}\mid \leq R,\,\dots,\, |w_{n-1}| \leq R,\\
(w,\,\tilde{w}) \in  \mathbb{C}^{n-1}\times \mathbb{C}^{l}\, ;& \mid \tilde{w}_{1} \mid \leq 1,\,\dots,\, \mid \tilde{w}_{l^{'}} \mid \leq 1,  \\
{}& \mid \tilde{w}_{l^{'} +1} \mid \leq1, \dots,\, \mid \tilde{w}_{l} \mid\leq 1
\end{array}
\right\}.
\end{equation}
 Since $\Psi$ is a closed immersion and $\Psi^{-1}(S\times(\tilde{L} \setminus \tilde{K})\times V) = S\times(L\setminus K)\times V$ holds, we have
\begin{equation}
\mathrm{H}^{k}_{S\times (L\setminus K)\times V } (\rcsptwo \times \mathbb{C}^{n-1}\times V,\,\hexpoy) \cong
\mathrm{H}^{k}_{S\times (\tilde{L}\setminus \tilde{K}) \times V} (\rcsptwo \times \mathbb{C}^{n-1}\times \mathbb{C}^{l} \times V,\,\Psi _{*}\hexpoy).
\end{equation}
Let $\mathcal{O}^{\exp}_{\hat{Z}}$ denote the sheaf of holomorphic functions of exponential type on $\hat{Z}$.
By Corollary \ref{cor}, we have
\begin{equation}
\mathrm{H}^{k}_{S\times (\tilde{L} \setminus \tilde{K}) \times V}(\rcsptwo \times \mathbb{C}^{n-1}\times \mathbb{C}^{l}\times V ,\,\mathcal{O}^{\exp}_{\hat{Z}}) = 0\qquad (k \leq (n-1) +l).
\end{equation}
Therefore, if we prove that there exists a resolution of 
the sheaf $\Psi_{*} \hexpoy$ by the sheaf $\mathcal{O}^{\exp}_{\hat{Z}}$ of length $l$, 
then the assertion follows from  Lemma \ref{inverse}.
Let us show existence of such a resolution.
By using $F_{j}$, we define a family $\{\phi_{j}\}$ of commuting endomorphisms of  
$\mathcal{O}^{\exp}_{\hat{Z}}$
given by 
\begin{equation}
\phi_{j} (f) := f(z,\,w,\,\tilde{w},\,t) (\tilde{w}_{j} - F_{j}(w))\,\quad (j=1,\,\dots,\,l).
\end{equation}
Let $e_{1},\,\dots,\,e_{l}$ be the canonical basis of $\mathbb{Z}^{l}$.
For an ordered subset $J:=(j_{1},\,\dots,\,j_{k})$ of $\{1,\,\dots,\,l \}$, we set
$e_{J}:=e_{j_{1}} \wedge \dots \wedge e_{j_{k}} \in \overset{k}{\wedge}(\mathbb{Z}^{l})$ and
\begin{equation}
M^{(k)} := \mathcal{O}^{\exp}_{\hat{Z}} \underset{\mathbb{Z}}{\otimes} \overset{k}{\wedge}(\mathbb{Z}^{l}) \quad (k=0,\,1,\,\dots,\,l).
\end{equation}
We also define the differential $d : M^{(k)} \rightarrow M^{(k+1)}$ by 
\begin{equation}
d(f e_{J}) := \sum^{l}_{j=1} \phi_{j}(f)e_{j} \wedge e_{J}, \quad fe_{J} \in M^{(k)}.
\end{equation}
It is easily seen that $d \circ d=0$ from the commutativity of the morphisms $\phi_{j}$'s.
Therefore we  get the following Koszul complex $\mathcal{M}$ associated to the sequence $(\phi_{1},\,\dots,\,\phi_{l})$.
\begin{equation}
\mathcal{M} : \quad 0\rightarrow M^{(0)} \overset{d}{\rightarrow} M^{(1)} \rightarrow \dots  \overset{d}{\rightarrow} M^{(l)} \rightarrow 0.
\end{equation}
Let us see that the complex $\mathcal{M}$ is the desired resolution of the sheaf $\Psi_{*} \hexpoy$.
We prepare the following lemma in order to apply Lemma \ref{schapiram} to the complex $\mathcal{M}$. 

\begin{lemma}\label{shapira}
For each $1\leq k \leq l$ and a point $p \in \hat{Z}$, the morphism $\phi _{k}$ is injective as an endomorphism of the module
$\dfrac{\mathcal{O}^{\exp}_{\hat{Z}\,\,p}}{\sum_{j=1}^{k-1}\phi_{j}(\mathcal{O}^{\exp}_{\hat{Z}\,\,p})}$.
Here $\mathcal{O}^{\exp}_{\hat{Z}\,\,p}$ denotes the stalk of the sheaf $\mathcal{O}^{\exp}_{\hat{Z}}$ at a point $p$.
\end{lemma}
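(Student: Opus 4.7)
The plan is to first replace $\phi_k$ by multiplication by the coordinate function $\tilde{w}_k$ via an explicit change of variables, and then to verify the regular-sequence property by a direct Taylor-division argument. The only delicate point is checking that the quotients $(g_j - g_j|_{\tilde{w}_k = 0})/\tilde{w}_k$ remain in the sheaf $\mathcal{O}^{\exp}_{\hat{Z}}$ rather than merely in $\mathcal{O}_{Z}$, but this is immediate because the exponential growth condition (\ref{gc}) involves only the $z$-variable whereas the division is in $\tilde{w}_k$.

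First I would introduce the biholomorphism $\sigma : Z \to Z$ given by $\sigma(z, w, \tilde{w}, t) := (z, w, \tilde{w} + F(w), t)$. Since $\sigma$ acts as the identity on the $z$-coordinate, it extends canonically to a homeomorphism $\hat{\sigma} : \hat{Z} \to \hat{Z}$ whose pullback preserves $\mathcal{O}^{\exp}_{\hat{Z}}$. Under $\hat{\sigma}^{*}$, the ideal generated by $\tilde{w}_1 - F_1(w), \dots, \tilde{w}_{k-1} - F_{k-1}(w)$ corresponds to the ideal $\sum_{j < k} \tilde{w}_j \mathcal{O}^{\exp}_{\hat{Z}}$, and $\phi_k$ becomes multiplication by $\tilde{w}_k$. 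Hence the lemma reduces to the following claim: for every $p \in \hat{Z}$ and every $1 \le k \le l$, multiplication by $\tilde{w}_k$ is injective on $\mathcal{O}^{\exp}_{\hat{Z}, p} / \sum_{j < k} \tilde{w}_j \mathcal{O}^{\exp}_{\hat{Z}, p}$. When the $\tilde{w}_k$-coordinate of $p$ is nonzero, $\tilde{w}_k$ is a unit in the stalk and there is nothing to prove; so the only remaining case is $\tilde{w}_{k, 0} = 0$.

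Given such a $p$ and a germ $f \in \mathcal{O}^{\exp}_{\hat{Z}, p}$ with $\tilde{w}_k f = \sum_{j<k} \tilde{w}_j g_j$, I would choose a product neighborhood $U$ of $p$ on which all germs are represented by holomorphic functions of exponential type, with the $\tilde{w}_k$-factor a disc $\{|\tilde{w}_k| < r\}$. Setting $g_{j, 0} := g_j|_{\tilde{w}_k = 0}$ and $h_j := (g_j - g_{j, 0})/\tilde{w}_k$, the holomorphic division theorem gives a holomorphic extension of $h_j$ across $\{\tilde{w}_k = 0\}$, and Cauchy's integral formula over a circle in the $\tilde{w}_k$-plane of radius slightly less than $r$ expresses $h_j$ as an average of $g_j$ over a compact set disjoint from the $z$-direction; consequently the bound $|g_j| \le C_K e^{H_K |z|}$ transfers directly to $h_j$, so $h_j \in \mathcal{O}^{\exp}_{\hat{Z}, p}$. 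Substituting $g_j = g_{j, 0} + \tilde{w}_k h_j$ into the original relation and setting $\tilde{w}_k = 0$ in both sides forces $\sum_{j<k} \tilde{w}_j g_{j, 0} \equiv 0$, leaving $\tilde{w}_k (f - \sum_{j<k} \tilde{w}_j h_j) \equiv 0$ on $U \cap Z$. Since $\tilde{w}_k$ is a nonzero holomorphic function on the connected open subset $U \cap Z$, it is not a zero-divisor in its ring of holomorphic functions, and cancellation yields $f = \sum_{j<k} \tilde{w}_j h_j$ as germs, which is exactly the required injectivity.
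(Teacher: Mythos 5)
Your argument is correct and is essentially the paper's own proof: the paper divides $g_j - g_j\big|_{\tilde{w}_k = F_k(w)}$ directly by $\tilde{w}_k - F_k(w)$, substitutes $\tilde{w}_k = F_k(w)$ into the relation to kill the constant terms, and observes that the quotients remain in $\mathcal{O}^{\exp}_{\hat{Z}\,\,p}$ for exactly the reason you give (the division takes place in a variable that is not the compactified one, so the exponential bound in $|z|$ is untouched). Your preliminary shear $\tilde{w} \mapsto \tilde{w} + F(w)$ merely normalizes $F \equiv 0$ and your case split on $\tilde{w}_{k,0} \neq 0$ handles what the paper leaves implicit; neither changes the substance of the argument.
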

\begin{proof}
Assume that $f \in\mathcal{O}^{\exp}_{\hat{Z}\,\,p}$ satisfies $\phi_{k}(f) = 0$ in $\dfrac{\mathcal{O}^{\exp}_{\hat{Z}\,\,p}}{\sum_{j=1}^{k-1}\phi_{j}(\mathcal{O}^{\exp}_{\hat{Z}\,\,p})}$.
Then there exist $g_{1},\,\dots,\,g_{k-1} \in \mathcal{O}^{\exp}_{\hat{Z}\,\,p}$ such that $\phi_{k}(f) = \sum^{k-1}_{j=1} \phi_{j}(g_{j})$ holds.
Hence we obtain
\begin{equation}\label{sumphi}
f(z,\,w,\,\tilde{w},\,t)(\tilde{w}_{k} -F_{k}(w)) = \sum_{j=1}^{k-1} 
g_{j}(z,\,w,\,\tilde{w},\,t) (\tilde{w}_{j} - F_{j}(w))
\end{equation}
as a relation of germs in $\mathcal{O}^{\exp}_{\hat{Z}\,\,p}$.
By setting $\tilde{w}_{k} = F_{k}(w)$ in $(\ref{sumphi})$, we have
\begin{equation}\label{sumzero}
\sum_{j=1}^{k-1} g_{j}(z,\,w,\,\tilde{w}_{1}, \, \dots,\,\tilde{w}_{k-1},\,
F_{k}(w),\,\tilde{w}_{k+1},\,\dots,\,\tilde{w}_{l},\,t) (\tilde{w}_{j} - F_{j}(w))=0 .
\end{equation}
For $1\leq j \leq k-1$, we set
\begin{equation}
h_{j}(z,\,w,\,\tilde{w},\,t) :=\frac{g_{j}(z,\,w,\,\tilde{w},\,t) - g_{j}(z,\,w,\,\tilde{w}_{1}, \, \dots,\,\tilde{w}_{k-1},\,F_{k}(w),\,\tilde{w}_{k+1},\,\dots,\,\tilde{w}_{l},\,t)}{\tilde{w}_{k} -F_{k}(w)}.
\end{equation}
Then $h_{j}$ belongs to $\mathcal{O}^{\exp}_{\hat{Z}\,\,p}$.
By $(\ref{sumphi})$ and $(\ref{sumzero})$, we have 
\begin{equation}
\begin{aligned}
\sum_{j=1}^{k-1}\phi_{j}(h_{j}(z,\,w,\,\tilde{w},\,t))&= \sum_{j=1}^{k-1}h_{j}(z,\,w,\,\tilde{w},\,t)(\tilde{w_{j}}-F_{j}(w))\\
&=\sum_{j=1}^{k-1}\frac{g_{j}(z,\,w,\,\tilde{w},\,t)}{\tilde{w}_{k} - F_{k}(w)}(\tilde{w}_{j} - F_{j}(w))\\
&=\frac{f(z,\,w,\,\tilde{w},\,t)(\tilde{w}_{k} - F_{k}(w))}{\tilde{w}_{k} - F_{k}(w)}=f(z,\,w,\,\tilde{w},\,t)
\end{aligned}
\end{equation}
as relations of germs in $\mathcal{O}^{\exp}_{\hat{Z}\,\,p}$.
Hence we obtain injectivity of $\phi_{k}$.
\end{proof}
For each point $p\in \hat{Z}$, 
by Lemma \ref{schapiram} and Lemma \ref{shapira}, we have
\begin{equation}\label{coho}
 \mathrm{H}^{k}(\mathcal{M}_{p}) =
        \left\{
        \begin{array}{ll}
                0 \qquad & (k\neq l), \\
                \\
           \dfrac{\mathcal{O}^{\exp}_{\hat{Z}\,\,p}}{\sum_{j=1}^{l}\phi_{j}(\mathcal{O}^{\exp}_{\hat{Z}\,\,p})}  \qquad & (k=l).
        \end{array}
        \right.
\end{equation}
Hence, by Lemma \ref{inverse} and the following lemma, 
we have obtained Theorem $\ref{polyhedra}$. The proof has been completed.
\end{proof}
\begin{lemma}
The following complex of  sheaves on $\hat{Z}$ is exact.
\begin{equation}\label{seq-62}
0 \rightarrow  M^{(0)} \overset{d}{\rightarrow} M^{(1)} \rightarrow \dots  
\overset{d}{\rightarrow} M^{(l)} 
\overset{\rho}{\rightarrow} \Psi_{*}\hexpoy \rightarrow 0.
\end{equation}
Here the sheaf morphism $\rho$ is induced from the one
$\Psi^{-1} \mathcal{O}^{\exp}_{\hat{Z}} \to \mathcal{O}^{\exp}_{\hat{Y}}$
defined by the substituntion $\tilde{w} =(F_{1}(w),\,\dots, F_{l}(w))$. 
\end{lemma}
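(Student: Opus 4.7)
Since the assertion is local on $\hat{Z}$, the plan is to verify exactness at an arbitrary stalk $p\in\hat{Z}$. By Lemma \ref{shapira} and Lemma \ref{schapiram}, the Koszul portion $M^{(0)}_p\to\cdots\to M^{(l)}_p$ is exact in degrees $k<l$, and its top cohomology coincides with $\mathcal{O}^{\exp}_{\hat{Z},\,p}/\sum_{j=1}^l\phi_j(\mathcal{O}^{\exp}_{\hat{Z},\,p})$. Since $\rho\circ\phi_j=0$ for each $j$ (the factor $\tilde{w}_j-F_j(w)$ vanishes after the substitution $\tilde{w}=F(w)$), $\rho$ factors through this quotient, so what remains is to show that the induced map
\[
\bar{\rho}\colon\mathcal{O}^{\exp}_{\hat{Z},\,p}\Big/\sum_{j=1}^l\phi_j(\mathcal{O}^{\exp}_{\hat{Z},\,p})\longrightarrow(\Psi_*\hexpoy)_p
\]
is an isomorphism.

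I would split into two cases. If $p=(z_0,w_0,\tilde{w}_0,t_0)\notin\Psi(\hat{Y})$, then $\tilde{w}_{k,0}\ne F_k(w_0)$ for some $k$, and the germ $\tilde{w}_k-F_k(w)$ is nonvanishing at $p$; its reciprocal is locally bounded and holomorphic, hence of exponential type (with growth constant $0$). Thus $\phi_k$ is a stalkwise isomorphism, the quotient is zero, and $(\Psi_*\hexpoy)_p=0$ because $p$ lies outside the support; both sides of $\bar{\rho}$ vanish. If instead $p=\Psi(q)$ with $q\in\hat{Y}$, then $(\Psi_*\hexpoy)_p\cong\hexpoy_q$, and surjectivity of $\bar{\rho}$ is immediate: any germ $f(z,w,t)\in\hexpoy_q$ lifts to $\tilde{f}(z,w,\tilde{w},t):=f(z,w,t)$, which retains the same exponential bound in $z$ since it is independent of $\tilde{w}$.

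The main obstacle is injectivity of $\bar{\rho}$: given $F\in\mathcal{O}^{\exp}_{\hat{Z},\,p}$ with $F(z,w,F_1(w),\ldots,F_l(w),t)\equiv 0$ near $p$, one must write $F\in\sum_{j=1}^l\phi_j(\mathcal{O}^{\exp}_{\hat{Z},\,p})$. I would argue by induction on $l$ via the one-step division
\[
F(z,w,\tilde{w},t)=F(z,w,\tilde{w}_1,\ldots,\tilde{w}_{l-1},F_l(w),t)+g_l(z,w,\tilde{w},t)\,(\tilde{w}_l-F_l(w)),
\]
with the quotient $g_l$ represented by the Cauchy integral
\[
g_l(z,w,\tilde{w},t)=\frac{1}{2\pi\sqrt{-1}}\oint\frac{F(z,w,\tilde{w}_1,\ldots,\tilde{w}_{l-1},\zeta,t)}{(\zeta-\tilde{w}_l)(\zeta-F_l(w))}\,d\zeta
\]
along a loop in the $\zeta$-plane enclosing both $\tilde{w}_l$ and $F_l(w)$. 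Preservation of the exponential-type bound is the delicate step, but it is free here: the contour lies only in the compact $\zeta$-variable and the $z$-variable is untouched, so a bound $|F|\le C_Ke^{H_K|z|}$ on a compact neighborhood of $p$ transfers directly to $g_l$. Applying the inductive hypothesis to the first summand, which now vanishes on $\tilde{w}_1=F_1(w),\ldots,\tilde{w}_{l-1}=F_{l-1}(w)$ and involves only $l-1$ of the Koszul factors, completes the induction and hence the exactness of \eqref{seq-62}.
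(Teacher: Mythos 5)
Your proof is correct and follows essentially the same route as the paper's: a stalkwise verification combining Lemma \ref{schapiram} with Lemma \ref{shapira}, a case split on whether $p$ lies in $\Psi(\hat{Y})$, surjectivity of $\rho$ via pullback through the projection $(z,w,\tilde{w},t)\mapsto(z,w,t)$, and identification of the kernel with $\sum_j\phi_j(\mathcal{O}^{\exp}_{\hat{Z}\,\,p})$ by division against the factors $\tilde{w}_j-F_j(w)$. Your iterated Cauchy-integral division is simply an explicit form of the paper's ``Taylor expansion with respect to $\tilde{w}_j-F_j(w)$,'' and it has the merit of making visible why the exponential-type bound in $z$ survives, namely because the contour lives only in the affine $\tilde{w}$-variables.
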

\begin{proof}
Let us prove exactness of the complex
\begin{equation}\label{stalks}
0 \rightarrow  M^{(0)}_{p} \overset{d}{\rightarrow} M^{(1)}_{p} \rightarrow \dots  \overset{d}{\rightarrow} M^{(l)}_{p} \rightarrow (\Psi_{*}\hexpoy)_{p}\rightarrow 0
\end{equation}
at each point $p\in \hat{Z}$.
Note that we have
\begin{equation}
 (\Psi_{*}\hexpoy)_{p} =
        \left\{
        \begin{array}{ll}
                0 \qquad & (p\notin \Psi(\hat{Y})), \\
                \\
             (\mathcal{O}^{\exp}_{\hat{Y}})_{\Psi^{-1}(p)} \qquad & (p\in  \Psi(\hat{Y})).
        \end{array}
        \right.
\end{equation}
For $p\notin \Psi(\hat{Y})$, it is immediate that the sequence $(\ref{stalks})$ is exact by  $(\ref{coho})$.
Let us show exactness of $(\ref{stalks})$ for $p \in \Psi(\hat{Y})$.
It is sufficient to see that 
$\dfrac{\mathcal{O}^{\exp}_{\hat{Z}\,\,p}}{\sum_{j=1}^{l}\phi_{j}(\mathcal{O}^{\exp}_{\hat{Z}\,\,p})}$ is isomorphic to  
$(\mathcal{O}^{\exp}_{\hat{Y}})_{\Psi^{-1}(p)}$ by $\rho$.
The morphism $\rho$ is clearly surjective  because 
a germ in 
$(\mathcal{O}^{\exp}_{\hat{Y}})_{\Psi^{-1}(p)}$ 
can be regarded as the one in 
${\mathcal{O}^{\exp}_{\hat{Z}\,\,p}}$ 
through the canonical projection
$(z,\,w,\,\tilde{w},\,t) \to (z,\,w,\,t)$.
Furthermore, by considering a Taylor expansion 
with respect to $\tilde{w}_{j}-F_{j}(w)$ $(j=1,\,\dots,\,l)$ at the point $p$, 
we find that its kernel consists of 
the germs in the form $\sum^{l}_{j=1} g_{j}(z,\,w,\,\tilde{w},\,t)(\tilde{w}_{j}-F_{j}(w))$
with $g_j \in \mathcal{O}^{\exp}_{\hat{Z}\,\,p}$. 
Hence we get the exact sequence
\begin{equation}
0\rightarrow \sum_{j=1}^{l}\phi_{j}({\mathcal{O}^{\exp}_{\hat{Z}\,\,p}}) \rightarrow {\mathcal{O}^{\exp}_{\hat{Z}\,\,p}}  \rightarrow
 (\mathcal{O}^{\exp}_{\hat{Y}})_{\Psi^{-1}(p)}\rightarrow 0.
\end{equation}
Therefore we get exactness of $(\ref{stalks})$ on $\hat{Z}$.
\end{proof}

\subsection{Edge of the wedge theorems for the sheaf $\hexpo$}{\label{subsec:edge}}
Using results prepared in previous subsection, we establish two kinds of edge of the wedge theorems for the sheaf $\hexpo$.
Let $M$ be an $n$-dimensional $\mathbb{R}$-vector space ($n \ge 1$) 
with an inner product $(\cdot,\cdot)$,
and let $E$ be its complexification $\mathbb{C} \underset{\mathbb{R}}{\otimes} M$
which is an $n$-dimensional $\mathbb{C}$-vector space 
with the norm induced from the inner product of $M$. 
Then, as in Section \ref{sec:vanishing_on_stein}, we can define the radial compactification $\rcsp$ of $E$. 
We denote by $\overline{M}$ the closure of $M$ in $\rcsp$ and set $\partial M := \overline{M} \setminus M$.
We also set $T := \mathbb{C}^m$ ($m \ge 0)$ and
$$
\begin{array}{ccc}
N := M \times T &\subset &X := E \times T \\
\cap & & \cap \\
\overline{N} := \overline{M} \times T & \subset & \hat{X}:= \rcsp \times T.
\end{array}
$$
Define $\partial N := \overline{N} \setminus N = \partial M \times T$.
Note that $\overline{N}$ is nothing but the closure of $N$ in $\hat{X}$ and
$\partial N$ can be identified with $S^{n-1} \times T$.

\begin{theorem} \label{thm-codimentionality}
The closed subset $\overline{N}$ in $\hat{X}$ is purely n-codimensional relative to the sheaf $\hexpo$, i.e.,
\begin{equation}
\mathscr{H}^{k}_{\overline{N}}(\hexpo)=0 \qquad (k \ne n).
\end{equation}
\end{theorem}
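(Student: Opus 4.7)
The claim is local, so it suffices to establish $\mathscr{H}^k_{\overline{N}}(\hexpo)_p = 0$ for $k \ne n$ at every $p \in \overline{N}$. If $p \in N$, then near $p$ we have $\hexpo = \mathcal{O}_X$ and $\overline{N}$ coincides with $N = M \times T$, so the statement reduces to Sato's classical theorem on the pure $n$-codimensionality of the totally real submanifold $M \subset E$ relative to $\mathcal{O}_E$. Hence the main case is $p \in \partial N$.

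For $p = (x^*\infty, t_0) \in \partial N$, choose coordinates $(z_1,\dots,z_n,t) = (x_1 + iy_1,\dots,x_n + iy_n,t)$ so that $M = \{y_1 = \cdots = y_n = 0\}$ and $x^* = (1,0,\dots,0)$, and compute the stalk by $\varinjlim_W \mathrm{H}^k_{\overline{N}\cap W}(W,\hexpo)$. It is crucial to work with a cofinal family of neighborhoods $W$ that are simultaneously pseudo-convex on $W \cap X$, regular at infinity (checked via $N^L_\infty(W) = W \cap X_\infty$), and small around $p$; concretely, logarithmic-cone type sets
$$W = \bigl\{z : \mathrm{Re}\,z_1 > R,\; |\mathrm{Im}\,z_1|^2 + \sum_{j\ge 2}|z_j|^2 < \epsilon^2(\mathrm{Re}\,z_1)^2\bigr\}\cup\Gamma_\epsilon\infty$$
(times a Stein open $V \subset T$), whose finite parts are convex real cones in $\mathbb{R}^{2n}$, do the job. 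Such $W$ is $\hexpo$-acyclic by Remark~\ref{remark}. Cover $W \setminus \overline{N}$ by the $n$ open sets $\Omega_j := W \setminus \overline{H_j}$, where $H_j := \{y_j = 0\} \subset X$; this is a cover since $\overline{N} = \bigcap_{j=1}^n \overline{H_j}$ in $\hat{X}$. Every intersection $\Omega_{j_1}\cap\cdots\cap\Omega_{j_p}$ breaks into $2^p$ connected ``octant'' components (selected by the signs of $y_{j_1},\dots,y_{j_p}$); each such component has convex finite part---intersection of the convex $W$ with the half-spaces $\pm y_{j_k} > 0$, times $V$---hence is pseudo-convex and still regular at infinity, so each is $\hexpo$-acyclic by Theorem~\ref{cvs}. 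Consequently $\{W,\Omega_1,\dots,\Omega_n\}$ is a Leray covering of the pair $(W, W \setminus \overline{N})$, and the associated relative $\check{\mathrm{C}}$ech complex---living in degrees $0, 1, \dots, n$---computes $\mathrm{H}^\bullet_{\overline{N}\cap W}(W,\hexpo)$. Vanishing for $k > n$ is then immediate by degree reasons, and for $k = 0$ it follows from the unique continuation property of holomorphic functions.

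The task thus reduces to showing vanishing in the intermediate degrees $1 \le k \le n-1$, and this is the main obstacle. The plan is to identify the resulting $\check{\mathrm{C}}$ech cohomology, after passage to the limit in the cofinal $W$, with local cohomology groups of the form $\mathrm{H}^k_{S \times L \times V}(\rcsptwo \times \mathbb{C}^{n-1} \times V,\,\hexpoy)$, where $S = [a,+\infty]$ and $L = [-r_1, r_1] \times\cdots\times [-r_{n-1}, r_{n-1}] \subset \mathbb{C}^{n-1}$ is realized as a compact analytic polyhedron via the defining functions $|e^{\pm i w_j}| \le 1$ and $|w_j|/r_j \le 1$. Theorem~\ref{polyhedra} then supplies vanishing of these groups for $k \le n - 1$, while Lemma~\ref{lemma3.1} handles any remaining top-degree contributions. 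The truly delicate step is the comparison between the cone-type topology of $\hat{X}$ near $p$ and the product-type topology of $\hat{Y}$ used in Theorem~\ref{polyhedra}, since the two radial compactifications differ precisely on the boundary at infinity; it is here that the algebraic character of the Martineau-type argument (Theorem~\ref{martineau}), rather than a topological duality, is indispensable.
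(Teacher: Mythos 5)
Your reduction to the stalk at a boundary point, the treatment of finite points via Sato's theorem, and the disposal of the degrees $k=0$ and $k>n$ by a relative Leray covering are all sound (modulo checking that each ``octant'' component really satisfies both hypotheses of Theorem~\ref{cvs} --- regularity at infinity via the $N^L_\infty$ criterion is plausible for these truncated convex cones, and condition 2 holds since they are of product type in $t$). But the heart of the theorem is the vanishing in degrees $1\le k\le n-1$, and there your proof stops at the level of a ``plan'': you assert that the limit of the relative \v{C}ech cohomologies can be identified with groups of the form $\mathrm{H}^{k}_{S\times L\times V}(\rcsptwo\times\mathbb{C}^{n-1}\times V,\hexpoy)$, you correctly flag the comparison between the cone geometry of $\hat{X}$ near $p_\infty$ and the product geometry of $\hat{Y}$ as ``the truly delicate step,'' and then you do not carry it out. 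That comparison is not a formality and it is not supplied by the ``algebraic character'' of Theorem~\ref{martineau}; it requires an explicit construction. In the paper it is effected by the homogeneous-coordinate map $\Phi(z_1,\dots,z_n)=(z_1,z_2/z_1,\dots,z_n/z_1)$, extended to a homeomorphism $\widehat{\Phi}$ of $\bigl(\rcsp\setminus\overline{\{z_1=0\}}\bigr)\times T$ onto $(\rcsptwo\setminus\{0\})\times\mathbb{C}^{n-1}\times T$ carrying the cone neighborhoods $\Omega_\epsilon$ of $p_\infty$ onto product neighborhoods $O_\epsilon$ and inducing an isomorphism of the sheaves of exponential type; this is what converts the stalk at $p_\infty$ into $\mathscr{H}^{k}_{\overline{\mathbb{R}}\times\mathbb{R}^{n-1}\times T}(\hexpoy)_{q_\infty}$, to which the product-type machinery applies. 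Without some substitute for $\widehat{\Phi}$, your intermediate-degree argument has no starting point.

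Even after that identification, two further steps that you do not mention are needed before Theorem~\ref{polyhedra} can be invoked: the support $\overline{\mathbb{R}}\cap\overline{V_\epsilon}^{\,\circ}$ must be split as $S_1\cup S_2$ with $S_1$ a bounded interval and $S_2=[a,+\infty]$, and a Mayer--Vietoris argument (using injectivity of the restriction of hyperfunctions with holomorphic parameters, to handle degree $n-1$) must be used to isolate the piece supported on $S_2$; and the real slice of the second factor must be exhibited as a \emph{difference} $L_\epsilon\setminus K_\epsilon$ of compact analytic polyhedra --- which is why the paper shrinks $W_\epsilon$ to $\widetilde{W}_\epsilon=\{\operatorname{Re}\varphi_\epsilon(w)>0\}$ --- since Theorem~\ref{polyhedra} is stated for such differences, not for a single compact set $L$ as in your sketch. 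Your observation that real cubes are analytic polyhedra via $|e^{\pm i w_j}|\le 1$ is correct and is indeed the mechanism the paper uses, but as written your argument for $1\le k\le n-1$ is an outline of intent rather than a proof.
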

\begin{proof}
Let $p_{\infty}= (1,\, 0,\, \dots,\, 0)\infty \times (0) \in \partial N \subset \hat{X}$.
As it is well-known that $N$ is purely $n$-codimensional relative to the sheaf of holomorphic functions on $X$, the proof is completed by showing 
\begin{equation}
\mathscr{H}^{k}_{\overline{N}}(\hexpo)_{p_{\infty}} = 
0 \qquad (k \neq n).
\end{equation}
For any $\epsilon >0$, we set
\begin{equation}
\begin{aligned}
&U_{\epsilon}  := \left\{ (z_{1}, \dots, z_{n} )\in E; |\arg z_{1}| <\epsilon,\, |z_{1}|>\frac{1}{\epsilon},\,\, |z_{i}|< \epsilon |z_{1}|, \quad i =2, \dots, n  \right\},\\
&T_{\epsilon} :=\{ t\in T; |t| < \epsilon \},\\
&\Omega_{\epsilon} := \overline{U_{\epsilon}}^{\circ} \times T_{\epsilon} \subset \hat{X}.
\end{aligned}
\end{equation}
Here the closure and interior are taken in $\hat{X}$. Then the family $\{ \Omega_{\epsilon} \}_{\epsilon>0}$ 
forms a fundamental system of open neighborhoods of $p_{\infty}$ in $\hat{X}$. Hence, for any $k$, we obtain
\begin{equation}
\mathscr{H}^{k}_{\overline{N}}(\hexpo)_{p_{\infty}} = \underset{\epsilon>0}{\varinjlim}\, \mathrm{H}^{k}_{\overline{N} \cap \Omega_{\epsilon}}( \Omega_{\epsilon} ,\,\hexpo).
\end{equation}
We also set
\begin{equation}
\begin{aligned}
&V_{\epsilon}  :=\left\{ z\in \mathbb{C} ; |\arg z| < \epsilon,\, |z|>\frac{1}{\epsilon} \right\}, \\
&W_{\epsilon} :=\left\{ (w_{1},\, \dots,\, w_{n-1}) \in\mathbb{C}^{n-1} ;  |w_{i}| < \epsilon, \quad i=1, \dots, n-1 \right\},\\
&O_{\epsilon}:={\overline{V_{\epsilon}}}^\circ \times W_{\epsilon} \times T_{\epsilon} \subset \rcsptwo \times \mathbb{C}^{n-1} \times T = \hat{Y}
\end{aligned}
\end{equation}
for any $\epsilon>0$.
Then $\{O_{\epsilon}\}_{\epsilon>0}$ is also a fundamental system of 
neighborhoods of the point 
$q_{\infty} := (1)\infty\times(0)\times (0) \in S^{1}\infty \times 
\mathbb{C}^{n-1}\times T $ in $\hat{Y}$, where $\hat{Y}$ was defined in the previous
subsection.

Let  $\Phi$ be the holomorphic map from $(E \setminus \{z_{1}=0\}) \times T$ 
to $(\mathbb{C}\setminus \{0\})\times\mathbb{C}^{n-1} \times T$ defined by
$$
(z_{1},\, \dots,\, z_{n}) \times t  \,\,\longmapsto \,\, z_{1} \times \left(\frac{z_{2}}{z_{1}},\,\dots,\,\frac{z_{n}}{z_{1}} \right) \times t.
$$
The map $\Phi$ gives a biholomorphic map between $U_{\epsilon} \times T_{\epsilon}$ and $V_{\epsilon} \times W_{\epsilon} \times T_{\epsilon} $.
Let us  regard $S^{2n-1}$ (resp. $S^{1}$) as the boundary of the  
closed unit ball of center the origin
in $E$ (resp. $\mathbb{C}$).
We define the bijection $\phi$ from $(S^{2n-1} \setminus \{z_{1}=0\})\infty \times T$ 
to $S^{1}\infty \times \mathbb{C}^{n-1} \times T$ 
by the following correspondence;
\begin{equation}
\begin{aligned}
&(z_{1},\,z_{2},\,\dots,\,z_{n})\infty \times t \,\,\longmapsto \,\, \left(\frac{z_{1}}{ |z_{1}|} \right)\infty \times \left(\frac{z_{2}}{ z_{1}} ,\,\dots,\,\frac{z_{n}}{ z_{1}}  \right) \times t,\\
&\left(\frac{z}{\sqrt{1+|w|^{2} }},\,\frac{zw}{\sqrt{1+|w|^{2} }} \right)\infty \times t
 \,\,\reflectbox{$\longmapsto$}\,\, z\infty \times w \times t.
\end{aligned}
\end{equation}
\begin{lemma}
	The map 
\begin{equation}
\widehat{\Phi} = \Phi \sqcup \phi: \left(\rcsp\setminus\overline{\{z_1 = 0\}}\right)
	\times T \to 
\left(\rcsptwo \setminus \{0\}\right) 
\times \mathbb{C}^{n-1} \times T
\end{equation}
is a homeomorphism and we have
$$
\widehat{\Phi}(\Omega_{\epsilon}) = O_{\epsilon}, \quad
\widehat{\Phi}\left(\overline{N} \cap \Omega_{\epsilon}\right) =
\left(\overline{\mathbb{R}}\times \mathbb{R}^{n-1} \times T\right) 
\cap O_\epsilon.
$$
Furthermore, for any relatively compact open subset $\Omega$ 
in $\rcsp\setminus\overline{\{z_1 = 0\}}$, the holomorphic map $\Phi$ gives
the sheaf isomorphism on $\Omega \times T$
\begin{equation}
\hexpo\big|_{\Omega\times T} \simeq 
\widehat{\Phi}^{-1}
\left(\mathcal{O}^{\exp}_{\widehat{Y}}\big|_{\widehat{\Phi}(\Omega \times T)}\right).
\end{equation}
\end{lemma}
\begin{proof}
It suffices to show $\widehat{\Phi}$ and $\widehat{\Phi}^{-1}$ 
to be continuous as the rest of claims in
the lemma can be easily confirmed.
Let  $\omega_{i} =(\omega_{i,\,1},\,\dots,\,\omega_{i,\,n}) \,\,(i=1,\,2,\,\dots)$  and     $\omega_{\infty} =(\omega_{\infty,\,1},\,\dots,\,\omega_{\infty,\,n})$ be points in $S^{2n-1}\setminus \{z_{1}=0\}$
with $\omega_{i} \rightarrow \omega_{\infty} \,\,(i \rightarrow \infty)$ in $S^{2n-1}$.
Let $\{k_{i}\}_{i}$ be a sequence of positive real numbers satisfying $k_{i} \rightarrow \infty\,\,(i\rightarrow \infty)$.
Set
\begin{equation}
\begin{aligned}
&\gamma_{i} :=(z_{i},\,t_{i}) \in E\setminus \{z_{1}=0\})\times T, \quad z_{i} :=k_{i}\omega_{i}\quad (i=1,\,\,2,\,\dots),\\
&\gamma_{\infty}:= (z_{\infty},\,t_{\infty})\in (S^{2n-1}\setminus \{z_{1}=0\})\infty \times T,\quad z_{\infty} := \omega_{\infty}\infty.
\end{aligned}
\end{equation}
Assume $\gamma_{i} \rightarrow \gamma_{\infty}\,\,(i \rightarrow \infty)$ in $\hat{X}$.
Then let us show  $\Phi(\gamma_{i}) \rightarrow \phi(\gamma_{\infty}) \,\,(i\rightarrow \infty)$ in $\hat{Y}$.
Note that we have
\begin{equation}
\begin{aligned}
&\Phi(\gamma_{i})=\left(k_{i}\omega_{i,\,1},\, \frac{\omega_{i,\,2}}{\omega_{i,\,1}},\,\dots,\, \frac{\omega_{i,\,n}}{\omega_{i,\,1}},\,t_{i} \right)\in \mathbb{C}\times \mathbb{C}^{n-1}\times T, \\
&\phi(\gamma_{\infty})= \left(\frac{\omega_{\infty,\,1}}{\vert \omega_{\infty,\,1} \vert}\infty,\,\frac{\omega_{\infty,\,2}}{\omega_{\infty,\,1}},\,\dots,\, \frac{\omega_{\infty,\,n}}{\omega_{\infty,\,1}},\,t_{\infty}   \right)\in S^{1}\infty \times \mathbb{C}^{n-1}\times T.
\end{aligned}
\end{equation}
As $\gamma_{i} \rightarrow \gamma_{\infty}\,\,(i \rightarrow \infty)$ in  $\hat{X}$, we have
\begin{equation}
\begin{aligned}
&\vert  k_{i}\omega_{i,\,j} - \vert z_{i} \vert \omega_{\infty,\,j} \vert \leq \epsilon_{i} \vert z_{i}  \vert, \quad \vert t_{i} -t_{\infty} \vert \leq \epsilon_{i}  \quad( j=1,\,\dots,\,n)\\
\end{aligned}
\end{equation}
for $\epsilon_{i}>0$ with $\epsilon_{i} \rightarrow 0$ $(i\rightarrow \infty)$.
Hence we obtain $\vert \omega_{i,\,j} - \omega_{\infty,\,j} \vert \leq \epsilon_{i}\,\,( j=1,\,\dots,\,n)$, from which we have, as $i\rightarrow \infty$, 
\begin{equation}\label{s1}
\begin{aligned}
\left\vert \frac{ k_{i}\omega_{i,\,1} }{\vert k_{i}\omega_{i,\,1} \vert} - \frac{\omega_{\infty,\,1}}{ \vert \omega_{\infty,\,1} \vert} \right\vert \rightarrow 0, \quad \left\vert \frac{\omega_{i,\,j}}{\omega_{i,\,1}} - \frac{\omega_{\infty,\,j}}{\omega_{\infty,\,1}} \right\vert \rightarrow 0 \quad(j=2,\,3,\,\dots n).
\end{aligned}
\end{equation}
Therefore we get $\Phi(\gamma_{i}) \rightarrow \phi(\gamma_{\infty})$ $(i\rightarrow \infty)$ in $\hat{Y}$.
Furthermore, let $\eta_{i} \,\,(i=1,\,2,\,\dots)$ and $\eta_{\infty}$ be points in $S^{1}$ satisfying
$\eta_{i} \rightarrow \eta_{\infty}\,\,(i\rightarrow \infty)$ in $S^{1}$.
Set
\begin{equation}
\begin{aligned}
&\rho_{i}:=(k_{i}\eta_{i},\,w_{i},\,t_{i})\in \mathbb{C}\times \mathbb{C}^{n-1}\times T \quad(i=1,\,2,\,\dots),\\
&\rho_{\infty}:=(\eta_{\infty}\infty,\,w_{\infty},\,t_{\infty})\in S^{1}\infty \times \mathbb{C}^{n-1}\times T.
\end{aligned}
\end{equation}
Suppose $\rho_{i} \rightarrow \rho_{\infty} \,\,(i \rightarrow \infty)$ in $\hat{Y}$.
Then let us show $\Phi^{-1}(\rho_{i}) \rightarrow \phi^{-1}(\rho_{\infty}) \,\,(i\rightarrow \infty)$ in $\hat{X}$.
Note that we have
\begin{equation}
\begin{aligned}
&\Phi^{-1}(\rho_{i}) = (k_{i}\eta_{i},\,k_{i}\eta_{i}w_{i},\,t_{i}) \in E\times T,\\
&\phi^{-1}(\rho_{\infty}) =\left(\left(\frac{\eta_{\infty}}{\sqrt{1+\vert w_{\infty}\vert^{2}}},\,\frac{\eta_{\infty}w_{\infty}}{\sqrt{1+\vert w_{\infty}\vert^{2}}} \right)\infty,\,t_{\infty}\right)\in S^{2n-1}\infty \times T.
\end{aligned}
\end{equation}
Since $\rho_{i} \rightarrow \rho_{\infty} \,\,(i \rightarrow \infty)$ in $\hat{Y}$, we have
\begin{equation}
\begin{aligned}
\vert \eta_{i} - \eta_{\infty} \vert \leq \epsilon_{i},\quad \vert w_{i}-w_{\infty}\vert \leq \epsilon_{i},\quad \vert t_{i}-t_{\infty}\vert \leq \epsilon_{i}
\end{aligned}
\end{equation}
for $\epsilon_{i}>0$ with $\epsilon_{i} \rightarrow 0$ $(i\rightarrow \infty)$, from which we have
\begin{equation}
\left\vert \frac{(k_{i}\eta_{i},\,k_{i}\eta_{i}w_{i})}{\vert (k_{i}\eta_{i},\,k_{i}\eta_{i}w_{i})\vert} - \frac{(\eta_{\infty},\,\eta_{\infty}w_{\infty})}{\sqrt{1+\vert w_{\infty}\vert^{2}}} \right\vert =\left\vert \frac{(\eta_{i},\,\eta_{i}w_{i})}{\sqrt{1+\vert w_{i}\vert^{2}}} -\frac{ (\eta_{\infty},\,\eta_{\infty} w_{\infty})}{\sqrt{1+\vert w_{\infty}\vert^{2}}} \right\vert \longrightarrow 0.
\end{equation}
Therefore we obtain $\Phi^{-1}(\rho_{i}) \rightarrow \phi^{-1}(\rho_{\infty}) \,\,(i\rightarrow \infty)$ in $\hat{X}$.
\end{proof}

By the lemma, we have
\begin{equation}\label{eqq-2}
\begin{aligned}
\mathscr{H}^{k}_{\overline{N}}(\hexpo)_{p_{\infty}} &= \underset{\epsilon}{\varinjlim} \mathrm{H}^{k}_{\overline{N} \cap \Omega_{\epsilon}}( \Omega_{\epsilon} ,\,\hexpo)\\&\cong \underset{\epsilon}{\varinjlim} 
\mathrm{H}^{k}_{ (\overline{\mathbb{R}}\times \mathbb{R}^{n-1} \times T) 
\cap O_{\epsilon}}( O_{\epsilon} ,\,\hexpoy)\\ &= 
\mathscr{H}^{k}_{\overline{\mathbb{R}}\times \mathbb{R}^{n-1} \times T }(\hexpoy)_{q_{\infty}}.
\end{aligned}
\end{equation}
Therefore, to prove the theorem, it suffices to show
\begin{equation}{\label{eq:first-taget}}
\mathscr{H}^{k}_{\overline{\mathbb{R}}\times \mathbb{R}^{n-1} \times T }(\hexpoy)_{q_{\infty}}
=0 \qquad(k \neq n).
\end{equation}
Let $\varphi_{\epsilon}$  be the polynomial of $w$ defined by
\begin{equation}
\varphi_{\epsilon}(w)={\left(\frac{ \epsilon}{2}\right)}^{2} - (w_{1}^{2} + \dots + w_{n-1}^{2}) \qquad (\epsilon>0).
\end{equation}
By using $\varphi_{\epsilon}$, we set
\begin{equation}
\begin{aligned}
&\widetilde{W}_{\epsilon} := \left\{ (w_{1},\,\dots,\,w_{n-1}) \in W_\epsilon; 
\,\,\mathrm{Re}(\varphi_{\epsilon}(w)) >0 \right\},\\
&\widetilde{O}_{\epsilon} :={\overline{V_{\epsilon}}}^\circ \times \widetilde{W}_{\epsilon} \times T_{\epsilon} \subset \hat{Y}.
\end{aligned}
\end{equation}
Obviously, the family $\{\widetilde{O}_{\epsilon}\}_{\epsilon>0}$ thus  defined becomes a fundamental system of neighborhoods of $q_{\infty}$ in $\hat{Y}$.
Hence, to obtain \eqref{eq:first-taget}, it is enough to prove
\begin{equation}
\mathrm{H}^{k}_{ (\overline{\mathbb{R}}\times \mathbb{R}^{n-1} \times T) 
\cap \widetilde{O}_{\epsilon}}( \widetilde{O}_{\epsilon} ,\,\hexpoy)=0
\qquad(k \neq n).
\end{equation}
Choose two points $a,\,b \in \mathbb{R}$ with ${\epsilon}^{-1}<a \leq b<+\infty$, and set
$S_{1} :=({\epsilon}^{-1},\,b ]$ and $S_{2} :=[a,\,+\infty]$.
By noticing
$$
(\overline{\mathbb{R}}\times \mathbb{R}^{n-1} \times T) 
\cap \widetilde{O}_{\epsilon} = (S_{1}\cup S_{2})\times 
(\widetilde{W}_{\epsilon} \cap\mathbb{R}^{n-1}) 
\times T_{\epsilon}, 
$$
we consider the long exact sequence of  cohomology groups
\begin{equation}\label{exse}
\begin{aligned}
\cdots&\rightarrow \mathrm{H}^{k}_{ (S_{1}\cap S_{2})\times (\widetilde{W}_{\epsilon} \cap\mathbb{R}^{n-1}) \times T_{\epsilon}}(\wtO_{\epsilon},\,\hexpoy )\\
&\rightarrow   \mathrm{H}^{k}_{S_{1}\times (\widetilde{W}_{\epsilon} \cap\mathbb{R}^{n-1}) \times T_{\epsilon} }(\wtO_{\epsilon},\,\hexpoy ) \oplus  \mathrm{H}^{k}_{S_{2}\times (\widetilde{W}_{\epsilon} \cap\mathbb{R}^{n-1}) \times T_{\epsilon} }(\wtO_{\epsilon},\,\hexpoy )  \\
&\rightarrow \mathrm{H}^{k}_{ (\overline{\mathbb{R}}\times \mathbb{R}^{n-1} \times T) \cap \wtO_{\epsilon} }(\wtO_{\epsilon},\,\hexpoy )\rightarrow \cdots.
\end{aligned}
\end{equation}
Set $N' := \mathbb{R} \times \mathbb{R}^{n-1} \times T \subset Y$.
The following lemma is well-known for specialists in this direction:
Let $\mathcal{B}\mathcal{O}_{N'}$ denote
the sheaf $\mathscr{H}^{n}_{N'}(\mathcal{O}_{Y})$ 
of hyperfunctions with  holomorphic parameters on $N'$ (here we omit
the orientation sheaf for simplicity).
\begin{lemma}
We have
\begin{equation}{\label{eq:vanish-bo-1}}
\mathrm{H}^{k}_{ (S_{1}\cap S_{2})\times (\widetilde{W}_{\epsilon} \cap\mathbb{R}^{n-1}) \times T_{\epsilon}}(\wtO_{\epsilon},\,\hexpoy ) =0 \quad (k\neq n),
\end{equation}
and
\begin{equation}
\begin{aligned}
\mathrm{H}^{k}_{ S_{1}\times (\widetilde{W}_{\epsilon} \cap\mathbb{R}^{n-1}) \times T_{\epsilon}}(\wtO_{\epsilon},\,\hexpoy )
=0 \quad ( k \neq n).
\end{aligned}
\end{equation}
\end{lemma}
\begin{proof}
We only show \eqref{eq:vanish-bo-1}.
Since  $N' = \mathbb{R}^{n} \times T$ 
is purely $n$-codimensional relative to the sheaf $\mathcal{O}_{Y} $,
we have
\begin{equation}
\mathrm{H}^{k}_{ (S_{1}\cap S_{2})\times (\widetilde{W}_{\epsilon} \cap\mathbb{R}^{n-1}) \times T_{\epsilon}}(\wtO_{\epsilon},\,\hexpoy )
=\mathrm{H}^{k-n}_{ (S_{1}\cap S_{2})\times (\widetilde{W}_{\epsilon} \cap\mathbb{R}^{n-1})\times T_{\epsilon}}(\wtO_{\epsilon}\cap N',\,  \mathcal{B}\mathcal{O}_{N'}).
\end{equation}
Hence \eqref{eq:vanish-bo-1} holds for $k < n$. By considering a relative Leray covering,
we can show \eqref{eq:vanish-bo-1} for $k > n$ 
in the same way as that for the proof of Lemma \ref{lemma3.1}.
\end{proof}

Then it follows from the above lemma and $(\ref{exse})$ that we get
\begin{equation}\label{nen}
\mathrm{H}^{k}_{ (\overline{\mathbb{R}}\times \mathbb{R}^{n-1} \times T) \cap \wtO_{\epsilon} }(\wtO_{\epsilon},\,\hexpoy)=\mathrm{H}^{k}_{ S_{2}\times (\widetilde{W}_{\epsilon} \cap\mathbb{R}^{n-1}) \times T_{\epsilon}}(\wtO_{\epsilon},\,\hexpoy )\quad (k \neq n-1,\,n).
\end{equation}
We also get the following exact sequence by $(\ref{exse})$.
\begin{equation}\label{65}
\begin{aligned}
0 &\rightarrow \mathrm{H}^{n-1}_{ S_{2}\times (\widetilde{W}_{\epsilon} \cap\mathbb{R}^{n-1}) \times T_{\epsilon}}(\wtO_{\epsilon},\,\hexpoy )\\
&\rightarrow \mathrm{H}^{n-1}_{ (S_{1}\cup S_{2})\times (\widetilde{W}_{\epsilon} \cap\mathbb{R}^{n-1}) \times T_{\epsilon}}(\wtO_{\epsilon},\,\hexpoy )\\
&\rightarrow  \Gamma_{ (S_{1}\cap S_{2}) \times 
(\widetilde{W}_{\epsilon}\cap\mathbb{R}^{n-1}) \times T_{\epsilon} }(\wtO_{\epsilon}\cap N',\,\mathcal{B}\mathcal{O}_{N'})\\
&\overset{\iota} {\rightarrow}    
\Gamma_{S_{1}\times (\widetilde{W}_{\epsilon} \cap\mathbb{R}^{n-1})\times T_{\epsilon} }(\wtO_{\epsilon}\cap N',\,\mathcal{B}\mathcal{O}_{N'})
\oplus \mathrm{H}^{n}_{S_{2}\times (\widetilde{W}_{\epsilon} \cap\mathbb{R}^{n-1}) 
\times T_{\epsilon} }(\wtO_{\epsilon},\,\hexpoy ).
\end{aligned}
\end{equation}
Since the canonical morphism 
$$ \Gamma_{ (S_{1}\cap S_{2}) \times (\widetilde{W}_{\epsilon}\cap\mathbb{R}^{n-1}) \times T_{\epsilon} }(\wtO_{\epsilon}\cap N',\,\mathcal{B}\mathcal{O}_{N'})
\rightarrow    \Gamma_{S_{1}\times (\widetilde{W}_{\epsilon} \cap\mathbb{R}^{n-1})\times T_{\epsilon} }(\wtO_{\epsilon}\cap N',\,\mathcal{B}\mathcal{O}_{N'})$$
is injective,
the morphism $\iota$ in $(\ref{65})$ is injective.
As a result, $(\ref{nen})$ holds for $k=n-1$ also.
Hence, together with $(\ref{nen})$, it suffices to show
\begin{equation}\label{van}
\mathrm{H}^{k}_{S_{2} \times (\widetilde{W}_{\epsilon} \cap\mathbb{R}^{n-1})\times T_{\epsilon}}(\wtO_{\epsilon},\,\hexpoy) =0 \qquad( k\neq n).
\end{equation}
By Lemma $\ref{lemma3.1}$, we have $(\ref{van})$ for $k\geq n+1$.
Let us prove $(\ref{van})$ to be true for $0\leq k \leq n-1$.
Set
\begin{equation}
\begin{aligned}
&L_{\epsilon} := \left\{ w \in \mathbb{C}^{n-1}\, ; \,\,|w_{1}| \leq \epsilon,\,\dots,|w_{n-1}| \leq \epsilon \right\} \cap \mathbb{R}^{n-1},\\
&K_{\epsilon} := L_{\epsilon} \cap \left\{ w\in \mathbb{C}^{n-1}\,\, ; \,\,\mathrm{Re}(\varphi_{\epsilon}(w)) \leq  0  \right\}.
\end{aligned}
\end{equation}
Then we have
\begin{equation}
L_{\epsilon} \setminus K_{\epsilon} = \widetilde{W}_{\epsilon}  \cap \mathbb{R}^{n-1}.
\end{equation}
Since $L_{\epsilon}$ and $K_{\epsilon} $ are closed analytic polyhedra,
it follows from Theorem \ref{polyhedra} that we obtain
\begin{equation}
\begin{aligned}
\mathrm{H}^{k}_{S_{2} \times (\widetilde{W}_{\epsilon} \cap\mathbb{R}^{n-1})\times T_{\epsilon}}(\wtO_{\epsilon},\,\hexpoy) 
&= \mathrm{H}^{k}_{S_{2} \times (L_{\epsilon} \setminus K_{\epsilon})\times T_{\epsilon} }(\rcsptwo\times \mathbb{C}^{n-1}\times T_{\epsilon},\,\hexpoy)\\
&=0 \qquad (0\leq k \leq n-1).
\end{aligned}
\end{equation}
This completes the proof.
\end{proof}

The following theorem plays an important role in proving softness of the sheaf of Laplace hyperfunctions with respect to the variables of hyperfunction part.
\begin{theorem}\label{edge2}
The closed subset $\partial N$ is purely n-codimensional relative to the sheaf $\hexpo$, that is, we have
\begin{equation}
\mathscr{H}^{k}_{\partial N} (\hexpo) = 0 \qquad (k \ne n).
\end{equation}
\end{theorem}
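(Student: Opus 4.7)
The plan is to adapt the blueprint of Theorem \ref{thm-codimentionality}: apply the homeomorphism $\widehat{\Phi}$ to transfer the problem from $\hat{X}$ to $\hat{Y}$, and then combine a long exact sequence argument for low cohomological degrees with a relative Leray covering argument for high degrees. At the boundary point $p_\infty = (1,0,\ldots,0)\infty \times 0 \in \partial N$, the map $\widehat{\Phi}$ induces an identification of a cofinal family of neighborhoods of $p_\infty$ with the sets $\widetilde{O}_\epsilon := \overline{V_\epsilon}^\circ \times \widetilde{W}_\epsilon \times T_\epsilon$ near $q_\infty = (1)\infty \times 0 \times 0 \in \hat{Y}$, under which $\partial N$ corresponds to $\{+\infty\} \times \mathbb{R}^{n-1} \times T$. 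It therefore suffices to show
\begin{equation*}
\mathrm{H}^k_{F_\epsilon}(\widetilde{O}_\epsilon,\, \hexpoy) = 0 \qquad (k \neq n),
\end{equation*}
where $F_\epsilon := \{+\infty\} \times (\widetilde{W}_\epsilon \cap \mathbb{R}^{n-1}) \times T_\epsilon$.

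For $k < n$ I would use the long exact sequence associated with the decomposition $G_\epsilon = F_\epsilon \sqcup B_\epsilon$, where $G_\epsilon := (\overline{\mathbb{R}} \cap \overline{V_\epsilon}^\circ) \times (\widetilde{W}_\epsilon \cap \mathbb{R}^{n-1}) \times T_\epsilon$ is the full real part, and $B_\epsilon := (1/\epsilon, \infty) \times (\widetilde{W}_\epsilon \cap \mathbb{R}^{n-1}) \times T_\epsilon$ is its (open in $G_\epsilon$) finite part. By the proof of Theorem \ref{thm-codimentionality}, $\mathrm{H}^k_{G_\epsilon}(\widetilde{O}_\epsilon, \hexpoy) = 0$ for $k \neq n$. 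Since $B_\epsilon$ is closed in the finite-part open $V_\epsilon \times \widetilde{W}_\epsilon \times T_\epsilon \subset Y$, excision together with the classical pure $n$-codimensionality of $\mathbb{R}^n \times T$ relative to $\mathcal{O}_Y$, and the softness of $\mathcal{B}\mathcal{O}_{N'}$ in the real direction combined with the Stein character of $T_\epsilon$ (as in the proof of Lemma \ref{lemma3.1}), yield $\mathrm{H}^k_{B_\epsilon}(\widetilde{O}_\epsilon, \hexpoy) = 0$ for $k \neq n$. The long exact sequence then gives $\mathrm{H}^k_{F_\epsilon} = 0$ for $k < n$.

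For $k > n$ I would construct a relative Leray covering $\mathcal{V} = \{\widetilde{O}_\epsilon,\, V_0,\, V_1,\, \ldots,\, V_{n-1}\}$ with sub-covering $\mathcal{V}' = \{V_0,\, V_1,\, \ldots,\, V_{n-1}\}$ of the pair $(\widetilde{O}_\epsilon,\, \widetilde{O}_\epsilon \setminus F_\epsilon)$, setting
\begin{equation*}
V_0 := (\overline{V_\epsilon}^\circ \setminus \{+\infty\}) \times \widetilde{W}_\epsilon \times T_\epsilon, \quad V_j := \overline{V_\epsilon}^\circ \times \bigl(\widetilde{W}_\epsilon \setminus \pi_j^{-1}(\mathbb{R})\bigr) \times T_\epsilon \quad (1 \leq j \leq n-1).
\end{equation*}
Each member and every finite intersection is of product type, with first factor an open subset of $\rcsptwo$ and second factor a (possibly disconnected) Stein open subset of $\mathbb{C}^{n-1} \times T$, so Theorem \ref{dim-1} confirms the Leray property. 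Since $\mathcal{V}$ has $n+1$ elements, the corresponding relative $\check{\mathrm{C}}$ech complex terminates in degree $n$, whence $\mathrm{H}^k_{F_\epsilon}(\widetilde{O}_\epsilon, \hexpoy) = 0$ for $k > n$. Combined with the low-degree vanishing, this completes the proof. The main obstacle will be carefully verifying the Leray property: one has to check that removing each real hyperplane $\pi_j^{-1}(\mathbb{R})$ from $\widetilde{W}_\epsilon$ (defined by $\operatorname{Re}(\varphi_\epsilon) > 0$ inside the polydisc $W_\epsilon$) yields a disjoint union of Stein open subsets, so that Theorem \ref{dim-1} applies uniformly to all the intersections that arise.
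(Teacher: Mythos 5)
Your proposal is correct and follows essentially the same route as the paper: transfer to $\hat{Y}$ by $\widehat{\Phi}$, kill the degrees $k>n$ with an $(n+1)$-element relative Leray covering (your covering $\mathcal{V}$ is exactly the special case $S=\{+\infty\}$, $K_j=\mathbb{R}$ of the paper's Lemma \ref{lemma3.1}), and reduce the degrees $k<n$ to Theorem \ref{thm-codimentionality}. The only cosmetic difference is in the last step: where you use the excision triple $F_\epsilon\subset G_\epsilon$ with complement $B_\epsilon$ and classical vanishing on the finite part, the paper writes $\mathrm{H}^{k}_{\{+\infty\}\times\cdots}(O_{\epsilon},\hexpoy)=\mathrm{H}^{k-n}_{\{+\infty\}\times\cdots}\bigl(O_{\epsilon},\mathscr{H}^{n}_{\overline{\mathbb{R}}\times\mathbb{R}^{n-1}\times T}(\hexpoy)\bigr)$ and concludes immediately for $k\le n-1$; both rest on the same input.
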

\begin{proof}
It is sufficient to compute the stalk of $\mathscr{H}^{k}_{\partial N}(\hexpo)$ at 
$p_{\infty} = (1,\, 0,\, \dots,\, 0)\infty\times(0)$ $\in \partial N \subset \hat{X}$.
We use the same notations as those in the proof of Theorem $\ref{thm-codimentionality}$.
By the same reasoning as that in the proof of Theorem $\ref{thm-codimentionality}$, we have
\begin{equation}\label{eqq}
\begin{aligned}
\mathscr{H}^{k}_{\partial N}(\hexpo)_{p_{\infty}}  &\cong \mathscr{H}^{k}_{
\{+\infty\}\times \mathbb{R}^{n-1}\times {T} }( \hexpoy)_{q_{\infty}} \\
& =\underset{\epsilon  \downarrow 0 }{\varinjlim}\,\, \mathrm{H}^{k}_{ \{ +\infty \} \times (W_{\epsilon} \cap  \mathbb{R}^{n-1}) \times{T_{\epsilon}}} (O_{\epsilon} ,\,  \hexpoy).
\end{aligned}
\end{equation}
Let us show
\begin{equation}\label{k-coho}
\mathrm{H}^{k}_{\{+\infty\}\times (W_{\epsilon} \cap  \mathbb{R}^{n-1}) \times{T_{\epsilon}}} (O_{\epsilon} ,\,  \hexpoy) =0 \quad (k\neq n).
\end{equation}
We have, by Theorem \ref{thm-codimentionality},
\begin{equation}
\mathscr{H}^{k}_{\overline{\mathbb{R}} \times \mathbb{R}^{n-1} \times T}
(\hexpoy) = 0 \quad (k\neq n),
\end{equation}
from which we get
\begin{equation}
\mathrm{H}^{k}_{\{+\infty\}\times (W_{\epsilon} \cap  \mathbb{R}^{n-1}) \times{T_{\epsilon}}} (O_{\epsilon} ,\,  \hexpoy)
=
\mathrm{H}^{k-n}_{\{+\infty\}\times (W_{\epsilon} \cap  \mathbb{R}^{n-1}) \times{T_{\epsilon}}} 
(O_{\epsilon} ,\,  
\mathscr{H}^{n}_{\overline{\mathbb{R}} \times \mathbb{R}^{n-1} \times T}
(\hexpoy)).
\end{equation}
Hence $(\ref{k-coho})$ follows for $k \le n-1$. We also obtain
$(\ref{k-coho})$ for $k\geq n+1$ by Lemma $\ref{lemma3.1}$.
This completes the proof.
\end{proof}

As a particular case, we have the following corollaries.
\begin{corollary}\label{edge-Rn}
The closed subset $\overline{M}$ in  $\rcsp$  
is purely n-codimensional relative to 
the sheaf $\mathcal{O}^{\exp}_{\rcsp}$, i.e.,
\begin{equation}
\mathscr{H}^{k}_{\overline{M}}(\mathcal{O}^{\exp}_{\rcsp})=0 \qquad (k \ne n).
\end{equation}
\end{corollary}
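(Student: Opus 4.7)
The plan is to deduce this statement directly from Theorem \ref{thm-codimentionality} by specializing the parameter space. Recall that throughout Subsection~\ref{subsec:edge} the parameter space $T$ is defined as $\mathbb{C}^m$ for an arbitrary integer $m \ge 0$, and the hypotheses of Theorem~\ref{thm-codimentionality} were established uniformly in $m$. In particular, taking $m=0$ is permitted, in which case $T$ reduces to a single point.

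With this choice, the setup of Theorem~\ref{thm-codimentionality} collapses as follows: $\hat{X} = \rcsp \times T = \rcsp$, $N = M \times T = M$, and $\overline{N} = \overline{M} \times T = \overline{M}$. Moreover the sheaf $\hexpo$ on $\hat{X}$ becomes exactly $\mathcal{O}^{\exp}_{\rcsp}$, since the defining exponential growth condition \eqref{gc} involves only the variables $z \in E$ and is unaffected by the (now absent) parameters $t$. First I would remark explicitly that the construction of $\hexpo$ in Section~\ref{sec:vanishing_on_stein} is consistent with this specialization.

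Then the conclusion of Theorem~\ref{thm-codimentionality}, namely
\begin{equation}
\mathscr{H}^{k}_{\overline{N}}(\hexpo) = 0 \qquad (k \neq n),
\end{equation}
translates verbatim into
\begin{equation}
\mathscr{H}^{k}_{\overline{M}}(\mathcal{O}^{\exp}_{\rcsp}) = 0 \qquad (k \neq n),
\end{equation}
which is precisely the statement of the corollary. There is no substantive step and no obstacle: the corollary is simply the parameter-free ($m=0$) case of the main edge of the wedge theorem of this section.
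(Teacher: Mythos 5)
Your proposal is correct and matches the paper's intent exactly: the paper introduces the corollary with ``As a particular case,'' meaning precisely the specialization $m=0$, $T=\mathbb{C}^0=\{\mathrm{pt}\}$, under which $\hat{X}=\rcsp$, $\overline{N}=\overline{M}$, and $\hexpo=\mathcal{O}^{\exp}_{\rcsp}$, so Theorem \ref{thm-codimentionality} reduces verbatim to the corollary. Nothing further is needed.
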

\begin{corollary}\label{edge}
The boundary $\partial M := \overline{M} \setminus M$ 
of $M$ is purely n-codimensional relative to the sheaf 
$\mathcal{O}^{\exp}_{\rcsp}$, i.e.,
\begin{equation}
\mathscr{H}^{k}_{\partial M} (\mathcal{O}^{\exp}_{\rcsp}) = 0 \qquad (k \ne n).
\end{equation}
\end{corollary}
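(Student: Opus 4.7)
The plan is to deduce Corollary \ref{edge} as the specialization of Theorem \ref{edge2} to the case where the parameter space $T = \mathbb{C}^m$ is trivial, i.e., $m = 0$. The setup in Subsection \ref{subsec:edge} allows $m \ge 0$, so this substitution is legitimate. With $T$ reduced to a point, one has $\hat{X} = \rcsp$, the sheaf $\hexpo$ becomes $\mathcal{O}^{\exp}_{\rcsp}$, and $N = M \times T$ becomes $M$. Consequently $\partial N = \partial M \times T$ reduces to $\partial M$, and the conclusion $\mathscr{H}^{k}_{\partial N}(\hexpo) = 0$ for $k \ne n$ reads exactly
\begin{equation}
\mathscr{H}^{k}_{\partial M}(\mathcal{O}^{\exp}_{\rcsp}) = 0 \qquad (k \ne n),
\end{equation}
as required. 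So the first step in the proof is simply to observe this specialization and appeal to Theorem \ref{edge2}.

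If a direct treatment is preferred, the argument would run parallel to that of Theorem \ref{edge2} but in the simpler parameter-free geometry. One would reduce to computing the stalk $\mathscr{H}^{k}_{\partial M}(\mathcal{O}^{\exp}_{\rcsp})_{p_{\infty}}$ at an arbitrary boundary point, which we may take to be $p_{\infty} = (1, 0, \dots, 0)\infty$, then apply the biholomorphism $(z_1, \dots, z_n) \mapsto (z_1, z_2/z_1, \dots, z_n/z_1)$ together with its continuous extension to the radial compactification (the parameterless version of the map $\widehat\Phi$ used in the proof of Theorem \ref{thm-codimentionality}) to identify this stalk with $\mathscr{H}^{k}_{\{+\infty\} \times \mathbb{R}^{n-1}}(\mathcal{O}^{\exp}_{\rcsptwo \times \mathbb{C}^{n-1}})_{q_{\infty}}$. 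Then one invokes Corollary \ref{edge-Rn} to get purely $n$-codimensionality of $\overline{\mathbb{R}} \times \mathbb{R}^{n-1}$ and uses the identity
\begin{equation}
\mathrm{H}^{k}_{\{+\infty\} \times (W_\epsilon \cap \mathbb{R}^{n-1})}(O_\epsilon, \mathcal{O}^{\exp}_{\rcsptwo \times \mathbb{C}^{n-1}}) = \mathrm{H}^{k-n}_{\{+\infty\} \times (W_\epsilon \cap \mathbb{R}^{n-1})}(O_\epsilon, \mathscr{H}^{n}_{\overline{\mathbb{R}} \times \mathbb{R}^{n-1}}(\mathcal{O}^{\exp}_{\rcsptwo \times \mathbb{C}^{n-1}}))
\end{equation}
to obtain the vanishing for $k \le n-1$. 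The vanishing for $k \ge n+1$ follows from the $(n+1)$-term relative Leray covering supplied by Lemma \ref{lemma3.1}.

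There is no genuine obstacle here: the corollary is a pure restatement of Theorem \ref{edge2} in the non-parametric situation, and every homeomorphism, Leray covering, and cohomology calculation used in Section 3 collapses trivially when $T$ is a singleton. The whole content of the corollary lies in the work already carried out to establish Theorem \ref{edge2}.
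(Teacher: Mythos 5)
Your proposal is correct and coincides with the paper's own treatment: the paper introduces Corollary \ref{edge} with the phrase ``as a particular case,'' meaning exactly the specialization of Theorem \ref{edge2} to $m=0$, where $T$ degenerates to a point and $\partial N$ becomes $\partial M$. The additional direct argument you sketch is a valid but unnecessary elaboration of what Theorem \ref{edge2} already provides.
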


At the end of this section, 
we give a lemma for vanishing of higher cohomology groups of global sections of
$\pboexp$,
which can be proved by using the same argument 
as that in the proof of Theorem \ref{edge2}.
\begin{lemma}{\label{lem:vanishing_on_hemisphere}}
Let $V$ be a Stein open subset in $T$, and let $\Omega$ be an open
subset in $\partial M \subset \rcsp$ which is properly contained in
an open hemisphere of $S^{n-1} = \partial M$.
Then we have
\begin{equation}
\mathrm{H}^{k}(\Omega \times V,\, \pboexp) = 0 
\qquad (k \ne 0).
\end{equation}
\end{lemma}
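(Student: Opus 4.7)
The plan is to adapt the proof of Theorem \ref{edge2}, applying its argument to cohomology on open sets rather than to stalks. First, by composing with an orthogonal transformation on $M$, I may WLOG assume that $\Omega$ is properly contained in the open hemisphere $\{(x_1,\dots,x_n) \in S^{n-1} : x_1 > 0\}$ of $\partial M$. Using the homeomorphism $\widehat{\Phi}$ constructed in Subsection \ref{subsec:edge}, the open set $\Omega \times V$ corresponds to $\{+\infty\} \times W \times V$ in $\hat{Y}$, where $W \subset \mathbb{R}^{n-1}$ is a bounded open subset (the uniform lower bound $x_1 > \delta > 0$ on $\overline{\Omega}$ forces $|x_i/x_1| \le 1/\delta$ on $\Omega$) and $V$ remains Stein in $T$.

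Next, I choose a Stein open neighborhood $B \subset \mathbb{C}^{n-1}$ of $W$ with $B \cap \mathbb{R}^{n-1} = W$ (which exists by the classical theorem on complexifications of real analytic manifolds, since $W$ is open in $\mathbb{R}^{n-1}$), and a sector-type open neighborhood $A$ of $+\infty$ in $\rcsptwo$. Setting $\widetilde{O} := A \times B \times V$ we have $\widetilde{O} \cap (\{+\infty\} \times \mathbb{R}^{n-1} \times T) = \{+\infty\} \times W \times V$. By pure $n$-codimensionality of $\partial N$ with respect to $\hexpo$ (Theorem \ref{edge2}), transferred to $\hat{Y}$ via $\widehat{\Phi}$, we obtain
\begin{equation*}
\mathrm{H}^k(\Omega \times V,\, \pboexp) \;\cong\; \mathrm{H}^{k+n}_{\{+\infty\} \times W \times V}(\widetilde{O},\, \hexpoy)
\end{equation*}
for every $k$.

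It remains to show the right-hand side vanishes for $k \ne 0$, for which I repeat the final step of Theorem \ref{edge2}'s proof. For $k \le -1$, by the pure $n$-codimensionality of $\overline{\mathbb{R}} \times \mathbb{R}^{n-1} \times T$ with respect to $\hexpoy$ (Theorem \ref{thm-codimentionality}), the right-hand side is isomorphic to $\mathrm{H}^{k}_{\{+\infty\} \times W \times V}(\widetilde{O},\, \mathscr{H}^{n}_{\overline{\mathbb{R}} \times \mathbb{R}^{n-1} \times T}(\hexpoy))$, which vanishes since $k < 0$. For $k \ge 1$, so that the total degree is $\ge n+1$, Lemma \ref{lemma3.1} applied with $S = \{+\infty\}$, $U = A$, $K = \mathbb{R}^{n-1}$ (a product of closed real lines in $\mathbb{C}$), and the Stein subsets $B$ and $V$ gives the desired vanishing. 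The main technical obstacle is the construction of a Stein neighborhood $B$ with the exact real trace $B \cap \mathbb{R}^{n-1} = W$ for general (possibly non-convex) bounded open $W$; this is taken care of by the classical fact invoked above, though one could alternatively cover $\Omega$ by small open subsets whose $\widehat{\Phi}$-images are open convex cubes (where the Stein tube $W_\alpha + iB_\epsilon(0)$ is immediately Stein) and patch by a standard Čech argument using that finite intersections of convex cubes remain convex.
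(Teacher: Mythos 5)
Your proposal follows essentially the same route as the paper's proof: reduce via an orthogonal change of coordinates to the hemisphere $\{\operatorname{Re}z_1>0\}$, transport $\Omega\times V$ by $\widehat{\Phi}$ to $\{+\infty\}\times\widetilde{\Omega}\times V\subset\hat{Y}$, take a Stein neighborhood of $\widetilde{\Omega}$ in $\mathbb{C}^{n-1}$ with real trace $\widetilde{\Omega}$ (the paper also invokes Grauert's theorem here), convert sheaf cohomology of $\pboexp$ into relative cohomology of $\hexpoy$ in degree $k+n$ using the pure codimensionality results, and conclude by Lemma \ref{lemma3.1} with $S=\{+\infty\}$ and $K=\mathbb{R}\times\cdots\times\mathbb{R}$. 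The only differences (using a sectorial neighborhood of $+\infty$ instead of all of $\rcsptwo$, and the optional \v{C}ech patching fallback) are immaterial, so the argument is correct and matches the paper's.
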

\begin{proof}
We may assume that $\Omega$ is properly contained in 
$\{\operatorname{Re} z_1 > 0\}$ of $S^{n-1}$.
Then there exists a relatively compact open subset
$\widetilde{\Omega} \subset \mathbb{R}^{n-1}$ for which 
$\Omega \times V$ is homeomorphic to 
$\{+\infty\} \times \widetilde{\Omega} \times V \subset \hat{Y}$ 
by the map $\widehat{\Phi} = \Phi \sqcup \phi$. 
It follows from Grauert's theorem that there exists a Stein open neighborhood
$W \subset \mathbb{C}^{n-1}$ of $\widetilde{\Omega}$ satisfying
$\widetilde{\Omega} = W \cap \mathbb{R}^{n-1}$.
Then we have
$$
\mathrm{H}^{k}(\Omega \times V,\, \pboexp)
= \mathrm{H}^{k+n}_{ \{ +\infty \} \times \widetilde{\Omega} \times V} 
(\rcsptwo \times W \times V ,\,  \hexpoy).
$$
Hence the claim follows from Lemma \ref{lemma3.1}.
\end{proof}

\section{The sheaf of Laplace hyperfunctions in several variables}
In this section we define the sheaf $\mathcal{B}\mathcal{O}^{\exp}_{\overline{N}}$ of Laplace hyperfunctions with holomorphic parameters.
We see that every hyperfunction can be extend to a Laplace hyperfunction, and
we also  show that there exists the canonical embedding from real analytic functions of exponential type to Laplace hyperfunctions as  a sheaf morphism.

Let us recall the geometrical situation studied in the previous section:
Let $M$ be an $n$-dimensional $\mathbb{R}$-vector space ($n \ge 1$) 
with an inner product, 
and let $E$ be its complexification $\mathbb{C} \underset{\mathbb{R}}{\otimes} M$
which is an $n$-dimensional $\mathbb{C}$-vector space with the norm induced from
the inner product of $M$.
We denote by $\overline{M}$ the closure of $M$ in the radial compactification
$\rcsp$ of $E$ and set $\partial M := \overline{M} \setminus M$.
We also set $T:=\mathbb{C}^m$ ($m \ge 0$) and 
$$
\begin{array}{ccc}
N := M \times T &\subset &X := E \times T \\
\cap & & \cap \\
\overline{N} := \overline{M} \times T & \subset & \hat{X}:= \rcsp \times T.
\end{array}
$$
Define $\partial N := \overline{N} \setminus N = \partial M \times T$.
Let $\mathbb{Z}_{\overline{N}}$ 
be the constant sheaf on $\overline{N}$ with stalk $\mathbb{Z}$.

\begin{proposition}\label{constant}
\begin{equation}
\mathscr{H}^{k}_{\overline{N}}(\mathbb{Z}_{{\hat{X}}}) \cong
\left\{
\begin{array}{ll}
\mathbb{Z}_{\overline{N}} &\quad (k=n), \\
0 &\quad (k\neq n).\\
\end{array}
\right.
\end{equation}
\end{proposition}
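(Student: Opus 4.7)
The plan is to compute the stalks of $\mathscr{H}^{k}_{\overline{N}}(\mathbb{Z}_{\hat{X}})$ at every point of $\overline{N}$ and then identify the resulting sheaf with $\mathbb{Z}_{\overline{N}}$ concentrated in degree $n$.

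At a point $p \in N \subset \overline{N}$, the closed set $\overline{N}$ coincides with $N$ in a neighborhood of $p$, so $\mathscr{H}^{k}_{\overline{N}}(\mathbb{Z}_{\hat{X}})_{p} = \mathscr{H}^{k}_{N}(\mathbb{Z}_{X})_{p}$; since $N = M \times T$ is a real $(n+2m)$-dimensional linear subspace of the $(2n+2m)$-dimensional real vector space $X$, the classical local cohomology computation for a real submanifold in a real manifold gives $\mathbb{Z}$ in degree $n$ and $0$ in other degrees.

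At a point $p \in \partial N$, by translation in $T$ and the $\mathbb{C}$-linear extension of a real orthogonal transformation of $M$ (which preserves $\rcsp$, $\overline{M}$, and hence $\hat{X}$, $\overline{N}$), I would reduce to $p_{\infty} = (1,0,\dots,0)\infty \times (0)$. Transporting via the homeomorphism $\widehat{\Phi}$ constructed in the proof of Theorem~\ref{thm-codimentionality}, one obtains
\[
\mathscr{H}^{k}_{\overline{N}}(\mathbb{Z}_{\hat{X}})_{p_{\infty}}
\cong
\varinjlim_{\epsilon>0}\mathrm{H}^{k}_{(\overline{\mathbb{R}}\times\mathbb{R}^{n-1}\times T)\cap O_{\epsilon}}(O_{\epsilon},\,\mathbb{Z}).
\]
The key observation is that $O_{\epsilon}$ is literally a product $\overline{V_{\epsilon}}^{\circ}\times W_{\epsilon}\times T_{\epsilon}$ and its support is the corresponding product $(\overline{\mathbb{R}}\cap\overline{V_{\epsilon}}^{\circ})\times(W_{\epsilon}\cap\mathbb{R}^{n-1})\times T_{\epsilon}$, so the K\"unneth formula for local cohomology with constant coefficients splits this into three tensor factors. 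The factor from $(T_{\epsilon},\,T_{\epsilon})$ contributes $\mathbb{Z}$ in degree $0$; the factor from $(W_{\epsilon},\,W_{\epsilon}\cap\mathbb{R}^{n-1})$ is the standard computation of $\mathbb{R}^{n-1}$ inside $\mathbb{C}^{n-1}$, giving $\mathbb{Z}$ in degree $n-1$; and the remaining factor $(\overline{V_{\epsilon}}^{\circ},\,\overline{\mathbb{R}}\cap\overline{V_{\epsilon}}^{\circ})$ becomes, in the coordinates $r = 1/|z|$ and $\theta = \arg z$, the topological pair $([0,\epsilon)\times(-\epsilon,\epsilon),\,[0,\epsilon)\times\{0\})$, whose relative cohomology is $\mathbb{Z}$ in degree $1$ and zero otherwise (the ambient half-disk is contractible, while its complement consists of two contractible components). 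The tensor product is thus $\mathbb{Z}$ in degree $n$, and the claimed vanishing holds for $k \ne n$.

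Together these computations show $\mathscr{H}^{k}_{\overline{N}}(\mathbb{Z}_{\hat{X}}) = 0$ for $k \ne n$ and that $\mathscr{H}^{n}_{\overline{N}}(\mathbb{Z}_{\hat{X}})$ is a locally constant sheaf of rank $1$ on $\overline{N}$. Because $\overline{N}$ is homeomorphic to $D^{n} \times \mathbb{C}^{m}$ and hence simply connected, this local system is automatically trivial, so it is isomorphic to $\mathbb{Z}_{\overline{N}}$, with a canonical generator pinned down by the orientation class on the interior $N$. The main obstacle is the boundary stalk calculation: although elementary in principle, it depends on having a genuinely product-type fundamental system of neighborhoods at $p_{\infty}$, which is precisely what the chart $\widehat{\Phi}$ from Theorem~\ref{thm-codimentionality} furnishes; once that chart is in place, the whole proof reduces to K\"unneth and the explicit pair $([0,\epsilon)\times(-\epsilon,\epsilon),\,[0,\epsilon)\times\{0\})$.
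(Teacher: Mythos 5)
Your argument is correct, and it reaches the boundary stalk through a genuinely different local model than the paper does. The paper works entirely in the ball picture: it identifies $\hat{X}\cong B^{2n}\times T$ and $\overline{N}\cong B^{n}\times T$, notes that near $p_{\infty}$ this pair is homeomorphic to $(\mathbb{R}_{\leq 0}\times\mathbb{R}^{2n-1}\times T,\ \mathbb{R}_{\leq 0}\times\mathbb{R}^{n-1}\times T)$ at the origin, strips off the contractible factors $\mathbb{R}_{\leq 0}$ and $T$ that occur identically in the support and the ambient space, and is left with the classical computation of $\mathscr{H}^{k}_{\mathbb{R}^{n-1}}(\mathbb{Z}_{\mathbb{R}^{2n-1}})$, evaluated via the long exact sequence and $\mathrm{H}^{k}(W_{\epsilon}\setminus\mathbb{R}^{n-1})=\mathrm{H}^{k}(S^{n-1})$; the degree $n$ appears all at once as the codimension of $\mathbb{R}^{n-1}$ in $\mathbb{R}^{2n-1}$. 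You instead transport the pair through the chart $\widehat{\Phi}$ of Theorem~\ref{thm-codimentionality} and split the degree as $1+(n-1)+0$ over the three factors of $O_{\epsilon}$; this runs exactly parallel to the analytic computation at $q_{\infty}$ in the proof of that theorem (where the same product geometry in $\hat{Y}$ produces the degree $n$), whereas the paper's route is more elementary in that it needs no chart at all, only the transversal position of $B^{n}$ relative to $\partial B^{2n}$. Two minor points: your K\"unneth step is safest phrased at the level of sheaves, i.e.\ $\mathbf{R}\Gamma_{A_{1}\times A_{2}\times A_{3}}(\mathbb{Z})\simeq \mathbf{R}\Gamma_{A_{1}}(\mathbb{Z})\boxtimes \mathbf{R}\Gamma_{A_{2}}(\mathbb{Z})\boxtimes \mathbf{R}\Gamma_{A_{3}}(\mathbb{Z})$ with each factor a shifted constant sheaf on its contractible support, since a naive K\"unneth for non-compactly-supported cohomology of non-compact spaces requires justification (the paper sidesteps this by peeling off only factors that are identical in support and ambient space); and your closing appeal to simple connectivity of $\overline{N}\cong D^{n}\times\mathbb{C}^{m}$ actually supplies the local-to-global step that the paper's stalkwise proof leaves implicit.
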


\begin{proof}
Let us compute the stalk of 
$\mathscr{H}^{k}_{\overline{N}}(\mathbb{Z}_{\hat{X}})$ at $p \in \overline{N}$.
It suffices to check the stalk at $p_{\infty}=(1,\,0,\,\dots,\,0)\infty \times (0)\in 
\partial N \subset \overline{N}$. 
Let $B^{n}$ be the closed unit ball in $\mathbb{R}^{n} \,\,(n\geq 1)$.
Then we have $\hat{X} \cong B^{2n} \times T$ and 
$\overline{N} \cong B^{n} \times T$ topologically. 
We denote by $\mathbb{R}_{\leq 0}$ the set of non-positive real numbers.
There is a family $\{U_{\epsilon} \}_{\epsilon}$ 
$($resp. $\{V_{\epsilon} \times W_{\epsilon}\}_{\epsilon})$ 
of  fundamental open neighborhoods of the point $(1,\,0,\,\dots,\,0) \in B^{2n}$  
$($resp. $(0)\times (0)\in \mathbb{R}_{\leq 0}\times \mathbb{R}^{2n-1})$ 
satisfying $B^{2n}\cap U_{\epsilon} \cong  (\mathbb{R}_{\leq 0}\times \mathbb{R}^{2n-1}) \cap (V_{\epsilon} \times W_{\epsilon})$ 
and $B^{n}\cap U_{\epsilon} \cong  (\mathbb{R}_{\leq 0}\times \mathbb{R}^{n-1}) \cap (V_{\epsilon} \times W_{\epsilon})$ topologically.
Hence, for a family $\{T_{\epsilon}\}_{\epsilon}$  of  fundamental neighborhoods of the origin in $T$, we obtain
\begin{equation}
\begin{aligned}
\mathscr{H}^{k}_{\overline{N}}(\mathbb{Z}_{\hat{X}})_{p_{\infty}}
&\cong \mathscr{H}^{k}_{B^{n}\times  T}(\mathbb{Z}_{B^{2n} \times T})_{(1,\,0,\,\dots,\,0)\times (0)}\\
&\cong \mathscr{H}^{k}_{\mathbb{R}_{\leq 0} \times \mathbb{R}^{n-1} \times T}(\mathbb{Z}_{\mathbb{R}_{\leq 0} \times \mathbb{R}^{2n-1}\times T})_{(0)\times (0)\times (0)} \\
&=\underset{\epsilon  }{\varinjlim}\,\, \mathrm{H}^{k}_{(\mathbb{R}_{\leq 0} \times \mathbb{R}^{n-1} \times T) \cap (V_{\epsilon}\times W_{\epsilon}\times T_{\epsilon})} (V_{\epsilon}\times W_{\epsilon}\times T_{\epsilon} ,\,  \mathbb{Z}_{\mathbb{R}_{\leq 0} \times \mathbb{R}^{2n-1}\times T})
\end{aligned}
\end{equation}
for $k \in \mathbb{Z}$.
As we may assume that $V_{\epsilon}$ and $T_{\epsilon}$ are contractible, we obtain
\begin{equation}
\begin{aligned}
\mathscr{H}^{k}_{\mathbb{R}_{\leq 0} \times \mathbb{R}^{n-1} \times T}(\mathbb{Z}_{\mathbb{R}_{\leq 0} \times \mathbb{R}^{2n-1}\times T})_{(0)\times (0)\times (0)}
&\cong \mathscr{H}^{k}_{\mathbb{R}^{n-1}}(\mathbb{Z}_{\mathbb{R}^{2n-1}})_{(0)}.\\
\end{aligned}
\end{equation}
Let $W_{\epsilon}$ be the open ball in $\mathbb{R}^{2n-1}$ of center the origin
and radius $\epsilon > 0$.
We consider the following long exact sequence of  cohomology groups.
\begin{equation}
\rightarrow \mathrm{H}^{k}_{\mathbb{R}^{n-1}\cap W_{\epsilon}}(W_{\epsilon},\mathbb{Z}_{\mathbb{R}^{2n-1}}) 
\rightarrow \mathrm{H}^{k}(W_{\epsilon},\,\mathbb{Z}_{\mathbb{R}^{2n-1}})  
\rightarrow \mathrm{H}^{k}({W}_{\epsilon}\setminus \mathbb{R}^{n-1},\,\mathbb{Z}_{\mathbb{R}^{2n-1}}) \rightarrow.
\end{equation}
By noticing
\begin{equation}
\mathrm{H}^{k}(W_{\epsilon},\,\mathbb{Z}_{\mathbb{R}^{2n-1}})   =
\left\{
\begin{array}{ll}
\mathbb{Z} &\quad k=0, \\
0 &\quad k\neq 0,\\
\end{array}
\right.
\end{equation}
and
\begin{equation}
\mathrm{H}^{k}({W}_{\epsilon}\setminus \mathbb{R}^{n-1},\,\mathbb{Z}_{\mathbb{R}^{2n-1}}) = \mathrm{H}^{k}(S^{n-1},\,\mathbb{Z}_{S^{n-1}})=
\left\{
\begin{array}{ll}
\mathbb{Z} &\quad k=0\,\,\mathrm{or}\,\, n-1, \\
0 &\quad \mathrm{otherwise},\\
\end{array}
\right.
\end{equation}
we have 
\begin{equation}
\mathrm{H}^{k}_{\mathbb{R}^{n-1}\cap W_{\epsilon}}(W_{\epsilon},\mathbb{Z}_{\mathbb{R}^{2n-1}})  =
\left\{
\begin{array}{ll}
\mathbb{Z} &\quad k=n, \\
0 &\quad k\neq n.\\
\end{array}
\right.
\end{equation}
Therefore we have obtained the assertion.
\end{proof}

For simplicity of notation, we write $\omega_{\overline{N}}$ instead of $\mathscr{H}^{n}_{\overline{N}}(\mathbb{Z}_{{\hat{X}}})$.
If $\Omega$ is a connected open subset in $\overline{N}$,
we can regard $\omega_{\overline{N}}(\Omega)$ as $\Z$ by the above proposition.

\begin{definition}
We define the sheaf $\mathcal{B}\mathcal{O}^{exp}_{\overline{N}}$ of Laplace hyperfunctions with holomorphic parameters on $\overline{N}$ by
\begin{equation}
\mathcal{B}\mathcal{O}^{\exp}_{\overline{N}} := \mathscr{H}^{n}_{\overline{N}}(\mathcal{O}^{\exp}_{\hat{X}}) \underset{\mathbb{Z}_{\overline{N}}}{\otimes} \omega_{\overline{N}}.
\end{equation}
\end{definition}

As $ \mathscr{H}^{k}_{\overline{N}}(\hexpo) =0 \,\,(k<n)$ holds by Theorem \ref{thm-codimentionality}, a global section of the sheaf $\mathcal{B}\mathcal{O}^{\exp}_{\overline{N}}$  can be written in terms of  cohomology groups.
For a connected open subset $\Omega$ in $\overline{N}$, 
by taking an open subset $V$ in $\hat{X}$ with
$\Omega = \overline{N} \cap V$, we have
\begin{equation}
\mathcal{B}\mathcal{O}^{\exp}_{\overline{N}} (\Omega ) 
= \mathrm{H}^{n}_{\Omega}(V,\,\hexpo) \underset{\mathbb{Z}}{\otimes}
\omega_{\overline{N}}(\Omega).
\end{equation}
Note that the above representation does not depend on the choice of $V$ in $\hat{X}$.
The restriction to $N$ of the sheaf $\mathcal{B} \mathcal{O}^{\exp}_{\overline{N}}$ is evidently just the sheaf $\mathcal{B}\mathcal{O}_{N}$ of hyperfunctions with holomorphic parameters.

As a particular case, we have the following definition.
\begin{definition}
The sheaf of Laplace hyperfunctions on $\overline{M}$ is defined by
\begin{equation}
\mathcal{B}^{\exp}_{\overline{M}} := 
\mathscr{H}^{n}_{\overline{M}}(\mathcal{O}^{\exp}_{\rcsp}) 
\underset{\mathbb{Z}_{\overline{M}}}{\otimes} \omega_{\overline{M}}.
\end{equation}
\end{definition}

The following theorem states that every hyperfunction can be extended to a Laplace hyperfunction.
Let $j : N \hookrightarrow \overline{N}$ be the natural embedding.
\begin{theorem}
The canonical sheaf morphism 
$\mathcal{B}\mathcal{O}^{\exp}_{\overline{N}} 
\rightarrow j_{*}\mathcal{B}\mathcal{O}_{N}$ is surjective.
\end{theorem}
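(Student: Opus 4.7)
The plan is to reduce surjectivity to a vanishing statement for the local cohomology sheaf $\mathscr{H}^1_{\partial N}(\mathcal{B}\mathcal{O}^{\exp}_{\overline{N}})$, and then obtain that vanishing by combining the two pure $n$-codimensionality theorems (Theorems \ref{thm-codimentionality} and \ref{edge2}).

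First, since $N$ is open in $\overline{N}$ with closed complement $\partial N$, and since $\mathcal{B}\mathcal{O}^{\exp}_{\overline{N}}\big|_{N} = \mathcal{B}\mathcal{O}_{N}$, the distinguished triangle $\mathbb{R}\Gamma_{\partial N}(\mathcal{F}) \to \mathcal{F} \to \mathbb{R}j_*(\mathcal{F}|_N) \xrightarrow{+1}$ applied to $\mathcal{F} = \mathcal{B}\mathcal{O}^{\exp}_{\overline{N}}$ yields the exact sequence
\begin{equation}
0 \to \mathscr{H}^{0}_{\partial N}(\mathcal{B}\mathcal{O}^{\exp}_{\overline{N}}) \to \mathcal{B}\mathcal{O}^{\exp}_{\overline{N}} \to j_{*}\mathcal{B}\mathcal{O}_{N} \to \mathscr{H}^{1}_{\partial N}(\mathcal{B}\mathcal{O}^{\exp}_{\overline{N}}) \to 0.
\end{equation}
Hence the theorem reduces to showing $\mathscr{H}^{1}_{\partial N}(\mathcal{B}\mathcal{O}^{\exp}_{\overline{N}}) = 0$.

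Second, I would compute this local cohomology by chaining the two closed-supported functors. The identity $\mathbb{R}\Gamma_{\partial N} \circ \mathbb{R}\Gamma_{\overline{N}} = \mathbb{R}\Gamma_{\partial N \cap \overline{N}} = \mathbb{R}\Gamma_{\partial N}$, combined with Theorem \ref{thm-codimentionality} (which gives $\mathbb{R}\Gamma_{\overline{N}}(\hexpo) \simeq \mathscr{H}^{n}_{\overline{N}}(\hexpo)[-n]$) and Theorem \ref{edge2} (which gives $\mathbb{R}\Gamma_{\partial N}(\hexpo) \simeq \mathscr{H}^{n}_{\partial N}(\hexpo)[-n]$), yields the quasi-isomorphism
\begin{equation}
\mathbb{R}\Gamma_{\partial N}\bigl(\mathscr{H}^{n}_{\overline{N}}(\hexpo)\bigr)[-n] \simeq \mathscr{H}^{n}_{\partial N}(\hexpo)[-n].
\end{equation}
Taking cohomology in every degree gives $\mathscr{H}^{p}_{\partial N}(\mathscr{H}^{n}_{\overline{N}}(\hexpo)) = 0$ for all $p \ne 0$. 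Finally, since $\omega_{\overline{N}} \simeq \mathbb{Z}_{\overline{N}}$ is (locally) constant by Proposition \ref{constant}, tensoring with $\omega_{\overline{N}}$ commutes with $\mathscr{H}^{\bullet}_{\partial N}$, so $\mathscr{H}^{1}_{\partial N}(\mathcal{B}\mathcal{O}^{\exp}_{\overline{N}}) = 0$, as desired.

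The substantive obstacle has actually been dealt with already in Section 3: it is precisely the purity of $\partial N$ relative to $\hexpo$ (Theorem \ref{edge2}). Once both Theorem \ref{thm-codimentionality} and Theorem \ref{edge2} are in hand, what remains here is only a formal derived-category manipulation; the main care one must take is in the identification $\mathcal{B}\mathcal{O}^{\exp}_{\overline{N}}|_N = \mathcal{B}\mathcal{O}_N$ (which follows by restricting the representation $\mathcal{B}\mathcal{O}^{\exp}_{\overline{N}}(\Omega) = \mathrm{H}^{n}_{\Omega}(V,\hexpo) \otimes \omega_{\overline{N}}(\Omega)$ to open subsets $\Omega \subset N$ and using $\hexpo|_X = \mathcal{O}_X$) and in verifying that the morphism induced by the triangle coincides with the canonical restriction map, which is immediate from its functorial definition.
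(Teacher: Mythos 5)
Your proposal is correct and is essentially the paper's own argument: the paper applies the same distinguished triangle to $\mathbf{R}\Gamma_{\overline{N}}(\hexpo)$ and reads off surjectivity from $\mathscr{H}^{n+1}_{\partial N}(\hexpo)=0$ (Theorem \ref{edge2}), which after the shift by $[-n]$ and tensoring with the constant sheaf $\omega_{\overline{N}}$ is exactly your vanishing $\mathscr{H}^{1}_{\partial N}(\mathcal{B}\mathcal{O}^{\exp}_{\overline{N}})=0$.
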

\begin{proof}
Let us consider the following distinguished triangle.
\begin{equation}
\mathbf{R}\Gamma_{\partial N}(\mathbf{R}\Gamma_{\overline{N}}(\mathcal{O}^{\exp}_{\hat{X}}))
\rightarrow
\mathbf{R}\Gamma_{\overline{N}}(\mathcal{O}^{\exp}_{\hat{X}})
\rightarrow
\mathbf{R}j_{*}j^{-1}\mathbf{R}\Gamma_{\overline{N}}(\mathcal{O}^{\exp}_{\hat{X}})
\overset{+1}{\rightarrow}.
\end{equation}
Then the assertion follows from Theorem \ref{edge2} and $\mathbf{R}j_{*}j^{-1}\mathbf{R}\Gamma_{\overline{N}}(\mathcal{O}^{\exp}_{\hat{X}}) = j_{*}\mathcal{B}\mathcal{O}_{N}[-n]$.
\end{proof}

Let $i: \overline{N} \hookrightarrow \hat{X}$ be the natural embedding.
\begin{definition}
The sheaf of real analytic functions of exponential type on $\overline{N}$ with holomorphic parameters is defined by
\begin{equation}
\mathcal{A}\mathcal{O}^{\exp}_{\overline{N}} := i^{-1}\hexpo =\hexpo|_{\overline{N}}.
\end{equation}
\end{definition}
Let us see that real analytic functions of exponential type with holomorphic parameters are regarded as Laplace hyperfunctions with holomorphic parameters.
We consider the following morphism.
\begin{equation}
	i^{-1}\hexpo \underset{\mathbb{Z}_{\overline{N}}}{\otimes} \omega_{\overline{N}} [-n] \rightarrow i^{!}\hexpo
\simeq i^{-1} \mathbf{R}\Gamma_{\overline{N}}(\hexpo).
\end{equation}
Applying the shift functor $[n]$ and the functor $(\cdot  )\otimes \omega_{\overline{N}} $ to the above morphism,  we obtain the sheaf morphism   $\alpha  :\mathcal{A}\mathcal{O}^{\exp}_{\overline{N}} \rightarrow \mathcal{B}\mathcal{O}^{\exp}_{\overline{N}}$.
Let $\mathcal{A}\mathcal{O}_{N}$ be the sheaf of  real analytic functions with holomorphic parameters on $N$,
 and let $\beta :\mathcal{A}\mathcal{O}_{N} \rightarrow \mathcal{B}\mathcal{O}_{N}$ be the canonical embedding of sheaves.
Now we consider the commutative diagram of sheaf morphisms.
\begin{equation}
\begin{CD}
\mathcal{A}\mathcal{O}^{\exp}_{\overline{N}} @> \alpha >> \mathcal{B}\mathcal{O}^{\exp}_{\overline{N}}\\
@V VV @VV V \\
j_{*}\mathcal{A}\mathcal{O}_{N} @> j_{*} \beta >> j_{*}\mathcal{B}\mathcal{O}_{N}
\end{CD}.
\end{equation}
Since  the morphisms $\mathcal{A}\mathcal{O}^{\exp}_{\overline{N}} \rightarrow j_{*}\mathcal{A}\mathcal{O}_{N}$ and $j_{*} \beta$ 
in the above diagram are injective, we have the following theorem.
\begin{theorem}
The canonical sheaf morphism $\mathcal{A}\mathcal{O}^{\exp}_{\overline{N}} \rightarrow \mathcal{B}\mathcal{O}^{\exp}_{\overline{N}}$ is injective.
\end{theorem}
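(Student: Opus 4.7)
The plan is to exploit the commutative diagram already displayed just before the statement and argue by a direct diagram chase. What has to be verified is that the left vertical morphism $\mathcal{A}\mathcal{O}^{\exp}_{\overline{N}} \to j_{*}\mathcal{A}\mathcal{O}_{N}$ and the bottom morphism $j_{*}\beta$ are both injective and that the square actually commutes; once these are in hand, a section $s$ of $\mathcal{A}\mathcal{O}^{\exp}_{\overline{N}}$ killed by $\alpha$ has $j_{*}\beta$ vanishing on its image in $j_{*}\mathcal{A}\mathcal{O}_{N}$, hence its image in $j_{*}\mathcal{A}\mathcal{O}_{N}$ is zero, hence $s=0$.

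For the left vertical, I would argue stalkwise at a point $p \in \overline{N}$. A germ of $\mathcal{A}\mathcal{O}^{\exp}_{\overline{N}}= i^{-1}\hexpo$ at $p$ is represented by an element of $\hexpo(U)$ for some open neighborhood $U$ of $p$ in $\hat{X}$, and the morphism sends it to its restriction to $U \cap N$. At an interior point of $N$ this is tautologically injective. At a boundary point $p \in \partial N$, I would shrink $U$ to a member $G_r(\Gamma,t_0)$ of the fundamental system of neighborhoods from \eqref{n.b.h.d}; then $U \cap X$ is connected and $U \cap N$ is a nonempty open subset of the totally real submanifold $N \subset X$. A holomorphic function on the connected open set $U \cap X$ that vanishes on $U \cap N$ vanishes identically by the classical identity principle for holomorphic functions, so the corresponding germ is zero.

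The bottom morphism is injective because $\beta: \mathcal{A}\mathcal{O}_N \to \mathcal{B}\mathcal{O}_N$ is the classical embedding of real analytic functions with holomorphic parameters into hyperfunctions with holomorphic parameters, which is known to be injective as a sheaf morphism on $N$, and $j_{*}$ is left exact. Commutativity of the square follows from naturality: both $\alpha$ and $\beta$ are obtained from the same canonical morphism $i^{-1}\mathcal{F} \otimes \omega_{\overline{N}}[-n] \to i^{!}\mathcal{F}$, once with $\mathcal{F}=\hexpo$ on $\hat{X}$ and once with $\mathcal{F}=\mathcal{O}_X$ on $X$, and these morphisms are intertwined by the adjunction restriction $\hexpo \to j_{*}j^{-1}\hexpo = j_{*}\mathcal{O}_X$ together with the identification $j^{-1}i^{!}\hexpo \simeq (i')^{!}\mathcal{O}_X[\,\cdot\,]$ on the open part, where $i': N \hookrightarrow X$.

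The only step that requires any real thought is the identity principle at a boundary point, and even there the shape of the fundamental neighborhoods $G_r(\Gamma,t_0)$ provides connected intersections with $X$ whose intersection with $N$ has nonempty interior in $N$, so no obstacle arises. Everything else is formal sheaf theory, and the theorem follows from the diagram chase described in the first paragraph.
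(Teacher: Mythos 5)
Your argument is correct and is essentially the paper's own proof: the paper establishes the theorem by exactly the diagram chase you describe, citing injectivity of $\mathcal{A}\mathcal{O}^{\exp}_{\overline{N}} \rightarrow j_{*}\mathcal{A}\mathcal{O}_{N}$ and of $j_{*}\beta$ in the commutative square. You merely supply the details (identity principle on the connected cone-shaped neighborhoods $G_r(\Gamma,t_0)$, left exactness of $j_*$, naturality for commutativity) that the paper leaves implicit.
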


\section{Several properties of the sheaf $\boexp$  }

In this section,
we prove softness of the sheaf $\boexp$ with respect to the variables of hyperfunction
part, and we also  show surjectivity of the restriction morphism 
$$
\boexp(\Omega \times V) \rightarrow 
j_*\mathcal{B}\mathcal{O}_{N}( \Omega \times V ) =
\mathcal{B}\mathcal{O}_{N}( (\Omega \times V ) \cap N)
$$ 
for any open subset $\Omega$ in $\overline{M}$
and a Stein open subset $V$ in $T$.
Here $j: N \hookrightarrow \overline{N}$ denotes the canonical embedding.

\begin{proposition}{\label{prop:flabby-dim-partial_R}}
Let $V$ be a Stein open subset in $T$, 
and let $\pi_V: \rcsp \times V \to \rcsp$ denote 
the canonical projection. Then we have 
$$
\mathbf{R}\pi_{V*}\pboexp \simeq \pi_{V*}\pboexp.
$$
Further, the flabby dimension of $\pi_{V*}\pboexp$ is less than or equal to $1$.
\end{proposition}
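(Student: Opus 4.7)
My plan is to prove the proposition in two parts: first the isomorphism $\mathbf{R}\pi_{V*}\pboexp \simeq \pi_{V*}\pboexp$, then the flabby dimension bound.

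For the first part, I would compute the stalks of $\mathbf{R}^k\pi_{V*}\pboexp$ on $\rcsp$. Outside $\partial M$ these vanish since $\pboexp$ is supported in $\partial N = \partial M \times T$. At a point $x_0 \in \partial M$, I would pick a cofinal family of open neighborhoods $\Omega \subset \rcsp$ of $x_0$ such that $\Omega \cap \partial M$ is properly contained in an open hemisphere of $S^{n-1}$. Since $\pboexp$ is supported on $\partial N$, one has $\mathrm{H}^k(\Omega \times V, \pboexp) = \mathrm{H}^k((\Omega \cap \partial M) \times V, \pboexp)$, which vanishes for $k \ge 1$ by Lemma~\ref{lem:vanishing_on_hemisphere}. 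Passing to the direct limit over such $\Omega$ gives $\mathbf{R}^k\pi_{V*}\pboexp = 0$ for $k \ge 1$.

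For the flabby dimension bound, my plan is to exploit the short exact sequence
\begin{equation*}
0 \to \pboexp \to \mathcal{BO}^{\exp}_{\overline{N}} \to \widetilde{j}_*\, i_{N,*}\mathcal{BO}_N \to 0
\end{equation*}
on $\hat{X}$, where $\widetilde{j}\colon \hat{X}\setminus \partial N \hookrightarrow \hat{X}$ is the open embedding and $i_N\colon N \hookrightarrow \hat{X}\setminus \partial N$ is the closed inclusion. This sequence follows from the distinguished triangle produced by Theorems~\ref{thm-codimentionality} and~\ref{edge2}, combined with the flabbiness of the hyperfunction sheaf $\mathcal{BO}_N$ with holomorphic parameters (which forces $\mathbf{R}^k \widetilde{j}_*\, i_{N,*}\mathcal{BO}_N = 0$ for $k \ge 1$). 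Applying $\pi_{V*}$ and invoking the first part to conclude $\mathbf{R}^1\pi_{V*}\pboexp = 0$, and using that $\pi_{V*}$ of a flabby sheaf has vanishing higher derived functors, I obtain the short exact sequence
\begin{equation*}
0 \to \pi_{V*}\pboexp \to \pi_{V*}\mathcal{BO}^{\exp}_{\overline{N}} \to j_{M*}\pi_{M,V*}\mathcal{BO}_N \to 0
\end{equation*}
on $\rcsp$, whose rightmost term is flabby, being the pushforward of the flabby sheaf $\pi_{M,V*}\mathcal{BO}_N$ on $M$ under the open embedding $j_M\colon M \hookrightarrow \rcsp$.

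The main obstacle is then to bound the flabby dimension of the middle term $\pi_{V*}\mathcal{BO}^{\exp}_{\overline{N}}$ by $1$: combined with the flabbiness of the rightmost term, the associated cohomology long exact sequence will yield isomorphisms $\mathrm{H}^k(W,\pi_{V*}\pboexp) \simeq \mathrm{H}^k(W,\pi_{V*}\mathcal{BO}^{\exp}_{\overline{N}})$ for $k \ge 2$ and all open $W \subset \rcsp$, giving the desired bound. I would reduce this, via Theorem~\ref{thm-codimentionality}, to the vanishing $\mathrm{H}^{k+n}_{(W \cap \overline{M}) \times V}(W \times V, \hexpo) = 0$ for $k \ge 2$, and attack it by combining Theorem~\ref{cvs} (with Remark~\ref{remark}, vanishing on pseudo-convex sets regular at infinity) with the long exact sequence comparing cohomology of $\hexpo$ on $W \times V$ and on $(W \setminus \overline{M}) \times V$. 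The delicate step is the construction of a Leray-type cover of $W$ by open sets that are simultaneously pseudo-convex on their $E$-parts and regular at infinity, in the spirit of the covering arguments used in the proofs of the edge-of-wedge theorems of Section~3; this is what I expect to be the technical heart of the argument.
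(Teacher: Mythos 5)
The first half of your argument (the stalkwise computation showing $\mathbf{R}^k\pi_{V*}\pboexp=0$ for $k\ge 1$ via Lemma \ref{lem:vanishing_on_hemisphere}) is correct and is essentially the paper's argument. The second half, however, contains a genuine gap. You propose to establish the flabby dimension bound by proving $\mathrm{H}^{k}(W,\pi_{V*}\pboexp)=0$ for $k\ge 2$ and all open $W\subset\rcsp$. This does not imply flabby dimension $\le 1$. Flabby dimension $\le 1$ is equivalent to flabbiness of the cokernel of $\mathcal{F}\hookrightarrow L^{0}$ for a flabby $L^{0}$, which amounts to surjectivity of $\mathrm{H}^{1}(X,\mathcal{F})\to\mathrm{H}^{1}(U,\mathcal{F})$ for every open $U$ (equivalently, to $\mathrm{H}^{k}_{Z}(U,\mathcal{F})=0$ for $k\ge 2$ and all locally closed $Z$). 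Degree-$\ge 2$ vanishing of ordinary cohomology only controls $\mathrm{H}^{k}_{Z}$ for $k\ge 3$, since $\mathrm{H}^{2}_{Z}(U,\mathcal{F})$ is a quotient of $\mathrm{H}^{1}(U\setminus Z,\mathcal{F})$; it yields flabby dimension $\le 2$, not $\le 1$. This matters concretely: the proposition is used later (Propositions \ref{cover} and \ref{surje}) precisely to kill $\mathrm{H}^{2}_{K_1\cap K_2}$ and $\mathrm{H}^{1}_{L}$ of $\pi_{V*}\pboexp$ with supports in closed sets, which your weaker conclusion would not give.

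There is a second, practical problem: your reduction passes through a flabby-dimension bound for $\pi_{V*}\mathcal{B}\mathcal{O}^{\exp}_{\overline{N}}$, i.e.\ a vanishing theorem for $\hexpo$ with supports over \emph{arbitrary} open subsets $W\subset\rcsp$, which you acknowledge you cannot yet prove. In the paper's logical order this is essentially the softness theorem of Section 5, which is \emph{deduced from} the present proposition; so your route is circular (or at best replaces the proposition by a strictly harder unproved statement). The fix is much simpler and you already have the ingredient in hand: Lemma \ref{lem:vanishing_on_hemisphere} gives $\mathrm{H}^{k}(\Omega\times V,\pboexp)=0$ for \emph{all} $k\ge 1$ and \emph{every} open $\Omega$ properly contained in a hemisphere, not just for a cofinal family of neighborhoods. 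Hence on each member $\Omega_i$ of a finite cover of $\partial M$ by such hemispherical pieces, $\pi_{V*}\pboexp\big|_{\Omega_i}$ has vanishing cohomology in all positive degrees on every open subset, which does give flabby dimension $\le 1$ of the restriction (the quotient by a flabby subsheaf is then flabby). Since flabby dimension is a local notion (if each $\mathcal{F}|_{\Omega_i}$ has flabby dimension $\le\ell$ then so does $\mathcal{F}$, because flabbiness of $\operatorname{Ker}d_\ell$ can be checked on a cover), the global bound follows with no exact-sequence bookkeeping at all.
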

\begin{proof}
Let $\{\Omega_i\}_i$ be a finite family of open subsets $\Omega_i$ in 
$\partial M$ satisfying that 
$\cup_i \Omega_i = \partial M$ and each $\Omega_i$ is properly contained
in some hemisphere of $S^{n-1} = \partial M$.
Then it follows from Lemma \ref{lem:vanishing_on_hemisphere} that, for
any open subset $\Omega \subset \Omega_i$, we have
\begin{equation}
\mathrm{H}^{k}(\Omega,\, \mathbf{R}\pi_{V*}\pboexp) =
\mathrm{H}^{k}(\Omega \times V,\,
\pboexp) = 0 
\qquad (k \ne 0).
\end{equation}
Hence the complex $\mathbf{R}\pi_{V*}\pboexp$ is concentrated in degree $0$, and
we have
$$
\mathrm{H}^{k}(\Omega,\, \pi_{V*}\pboexp) = 0 \qquad (k \ne 0)
$$
for any open subset $\Omega \subset \Omega_i$.
This concludes that, for each $i$,
the flabby dimension of the sheaf $\pi_{V*}\pboexp\big|_{\Omega_i}$ on $\Omega_i$
is less than or equal to $1$. Then the last claim is a consequence
of the following easy lemma.
\end{proof}
\begin{lemma}
Let $\mathcal{F}$ be a sheaf on a topological space $X$,
and let $\{\Omega_i\}$ be an open covering of $X$. Assume that, for each $i$, 
	the flabby dimension of the sheaf 
	$\mathcal{F}\big|_{\Omega_i}$ on $\Omega_i$ is less than or equal to $\ell \in \mathbb{N} \cup \{0\}$.
	Then, that of $\mathcal{F}$ on $X$ is also less than or equal to $\ell$.
\end{lemma}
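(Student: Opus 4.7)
The plan is to reduce to the following standard fact, which I will call the \emph{locality of flabbiness}: a sheaf $\mathcal{K}$ on $X$ is flabby if and only if its restriction $\mathcal{K}\big|_{\Omega_i}$ to each member of an open covering $\{\Omega_i\}$ of $X$ is flabby. Once this is granted, the lemma follows by a standard dimension-shifting argument applied to a flabby resolution of $\mathcal{F}$.

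More precisely, I would first fix a flabby resolution
\begin{equation*}
0 \to \mathcal{F} \to \mathcal{G}^{0} \to \mathcal{G}^{1} \to \cdots \to \mathcal{G}^{\ell-1} \to \mathcal{K} \to 0,
\end{equation*}
where $\mathcal{K}$ is defined as the cokernel of $\mathcal{G}^{\ell-2} \to \mathcal{G}^{\ell-1}$ (with the obvious convention when $\ell = 0$). Restriction to an open subset preserves both exactness and flabbiness, so for each $i$ the restricted sequence is again a flabby resolution of $\mathcal{F}\big|_{\Omega_i}$ of length $\ell$. By the hypothesis $\mathrm{fl\,dim}(\mathcal{F}\big|_{\Omega_i}) \le \ell$, the sheaf $\mathcal{K}\big|_{\Omega_i}$ is therefore flabby for every $i$. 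Applying the locality of flabbiness, $\mathcal{K}$ itself is flabby on $X$, which exhibits a flabby resolution of $\mathcal{F}$ of length $\le \ell$ and concludes the proof.

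The substantive step is the locality lemma itself, which I would prove by a Zorn's lemma argument. Given any inclusion $V \subset U$ of open subsets of $X$ and a section $s \in \mathcal{K}(V)$, consider the poset $\mathcal{P}$ of pairs $(W,\,t)$ with $V \subset W \subset U$ open and $t \in \mathcal{K}(W)$ satisfying $t\big|_{V} = s$, ordered by extension. Zorn's lemma (using the sheaf axiom to glue along increasing chains) yields a maximal element $(W_{0},\,t_{0})$. If $W_{0} \ne U$, pick $x \in U \setminus W_{0}$ and an index $i$ with $x \in \Omega_{i}$. By flabbiness of $\mathcal{K}\big|_{\Omega_{i}}$, the restriction $t_{0}\big|_{W_{0} \cap \Omega_{i}}$ extends to some $t' \in \mathcal{K}(U \cap \Omega_{i})$; since $t_{0}$ and $t'$ agree on the overlap $W_{0} \cap (U \cap \Omega_{i})$, the sheaf axiom produces a gluing on $W_{0} \cup (U \cap \Omega_{i})$ which strictly extends $t_{0}$, contradicting maximality. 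Hence $W_{0} = U$ and $\mathcal{K}(U) \to \mathcal{K}(V)$ is surjective.

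The main (and only mild) obstacle is verifying the locality of flabbiness; everything else is the routine dimension-shifting. Once the Zorn's lemma extension argument is in place, the lemma is essentially immediate.
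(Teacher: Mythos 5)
Your proof is correct and follows essentially the same route as the paper's: reduce the statement to the locality of flabbiness over an open cover, then apply the standard dimension-shifting argument to a flabby resolution truncated at length $\ell$ (your cokernel $\mathcal{K}$ is the paper's kernel sheaf $\operatorname{Ker} d_\ell$ up to isomorphism). The only difference is that you spell out the Zorn's lemma gluing argument for the locality of flabbiness, which the paper dismisses as ``easy to see.''
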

\begin{proof}
It is easy to see that, if $\mathcal{F}\big|_{\Omega_i}$ is a flabby sheaf on $\Omega_i$
for each $i$, then $\mathcal{F}$ itself is flabby on $X$.
Now let us take a flabby resolution $L$ of $\mathcal{F}$ on $X$:
$$
L:\,0 \to \mathcal{F} \to \mathcal{L}^0 \overset{d_0}{\to}
\mathcal{L}^1 \overset{d_1}{\to} 
\mathcal{L}^2 \overset{d_2}{\to} \cdots.
$$
By the assumption, for each $i$, 
the restriction $\operatorname{Ker} d_\ell\big|_{\Omega_i}$ 
of the kernel sheaf $\operatorname{Ker} d_\ell$
is flabby. Hence
$\operatorname{Ker} d_\ell$ is flabby on $X$, which entails that 
$\mathcal{F}$ has a flabby resolution of length $\ell$.
\end{proof}

We prepare some notations which are needed in the subsequent arguments.
Let $(z_1, \dots, z_n)$ be a system of coordinates of $E$.
Let 
$\Phi : E \setminus \{z_{1} =0\}  
\rightarrow (\mathbb{C}\setminus\{0\}) \times \mathbb{C}^{n-1}$ 
be the holomorphic map given by 
\begin{equation}
\Phi(z_{1},\,\dots,\,z_{n}) =\left(z_1,\,\frac{z_{2}}{z_{1}},\,\dots,\,\frac{z_{n}}{z_{1}} \right),
\end{equation}
and let $A$ be a $\mathbb{C}$-linear isomorphism on $E$.
Then we define $\Phi_A := \Phi \circ A$ and
the linear hypersurface $H_A := A^{-1}(\{z_1 = 0\})$ in  $E$.
As in Subsection \ref{subsec:edge}, the $\Phi_A$ extends to a homeomorphism between 
$\rcsp \setminus \overline{H_A}$ 
and $(\rcsptwo \setminus \{0\}) \times \mathbb{C}^{n-1}$ 
which is denoted by the same symbol $\Phi_{A}$ hereafter.
\begin{definition}\label{phi}
Let $K$ be a compact subset in $\overline{M}$.
We say that $K$ is of product type in $\overline{M}$ 
if there exists a linear isomorphism $A$ on $E$ such that
$K$ and $S \times L_{1} \times \dots \times L_{n-1}$ are homeomorphic by $\Phi_A$
for a compact subset $S\subset \overline{\mathbb{R}}\setminus\{0\}$
and compact subsets $L_{1},\,\cdots,\,L_{n-1}$ in $\mathbb{R}$.
\end{definition}
Then we establish the following vanishing theorem for a closed subset
which is a union of compact subsets of product type in $\overline{M}$.
\begin{proposition}\label{cover}
Let $V$ be a Stein open subset in $T$, and
let $K$ be a finite union of closed subsets $K_{i}$ in $\partial {M}$ 
$(i=1,2,\dots,\ell)$.
Assume that each $K_{i} \subset \partial M$ is of product type in $\overline{M}$.
Then we have
\begin{equation}
\mathrm{H}^{k}_{K \times V}(\rcsp \times V,\,\hexpo)=0 \qquad (k\neq n).
\end{equation}
\end{proposition}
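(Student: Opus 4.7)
The proof will go by induction on the number $\ell$ of closed subsets in the union. For the base case $\ell = 1$, fix $K = K_1$ together with the data $A$, $S$, $L = L_1 \times \cdots \times L_{n-1}$ as in Definition \ref{phi}. Since $K \subset \partial M$, one must have $S \subset \partial \overline{\mathbb{R}} = \{+\infty,\, -\infty\}$. Combining the homeomorphism $\widehat{\Phi}_A$ from Subsection \ref{subsec:edge} with its accompanying sheaf isomorphism, and using excision first to remove $\overline{H_A}$ from $\rcsp$ and then to re-insert the disjoint point $\{0\} \in \rcsptwo$, we obtain an identification
\begin{equation*}
\mathrm{H}^k_{K \times V}(\rcsp \times V,\, \hexpo) \cong
\mathrm{H}^k_{S \times L \times V}(\rcsptwo \times \mathbb{C}^{n-1} \times V,\, \hexpoy).
\end{equation*}
A Mayer--Vietoris splitting for the disjoint union $\{+\infty\} \sqcup \{-\infty\}$ together with symmetry reduces further to the case $S = \{+\infty\}$.

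For this reduced computation the range $k \geq n+1$ is immediate from Lemma \ref{lemma3.1}, while for $k \leq n-1$ I plan to invoke a strengthening of Proposition \ref{p-lemma} allowing each factor of the supporting compact set to be an arbitrary compact subset of $\mathbb{C}$. The induction-on-$n$ argument underlying Proposition \ref{p-lemma} already permits the last factor to be arbitrary, and the contour-integral proof of Theorem \ref{martineau} does not rely on $K_j$ or $L_j$ being discs---only on the existence of an encircling cycle $\gamma_j$ in $\mathbb{C} \setminus K_j$ lying inside $L_j$, which exists for any compact $K_j \subsetneq L_j$. Iterating the induction with this observation yields the desired strengthening, which then applies directly to $L_j \subset \mathbb{R} \subset \mathbb{C}$ compact.

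For the inductive step, write $K = K_\ell \cup K'$ with $K' = K_1 \cup \cdots \cup K_{\ell-1}$ and use the Mayer--Vietoris long exact sequence
\begin{equation*}
\cdots \to \mathrm{H}^{k}_{(K_\ell \cap K') \times V}(\hexpo) \to \mathrm{H}^{k}_{K_\ell \times V}(\hexpo) \oplus \mathrm{H}^{k}_{K' \times V}(\hexpo) \to \mathrm{H}^{k}_{K \times V}(\hexpo) \to \mathrm{H}^{k+1}_{(K_\ell \cap K') \times V}(\hexpo) \to \cdots.
\end{equation*}
The induction hypothesis handles the first two kinds of term; for the intersection $K_\ell \cap K' = \bigcup_{i < \ell}(K_\ell \cap K_i)$, each pairwise piece will be subdivided into finitely many closed subsets of product type in a coordinate system adapted simultaneously to $A_\ell$ and $A_i$, again bringing the intersection within the scope of the inductive hypothesis.

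The principal obstacles I anticipate are (i) writing down the extension of Proposition \ref{p-lemma} and Theorem \ref{martineau} to arbitrary compact factors, which requires a careful re-examination of the contour-deformation arguments to ensure no disc-specific property is being used, and (ii) the geometric subdivision of intersections $K_\ell \cap K_i$ of product-type sets across different linear coordinates, for which the compactness of each piece and a common refinement of the two product structures will need to be handled with some care.
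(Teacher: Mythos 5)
Your proposal has genuine gaps, and they are concentrated exactly where you flag your "anticipated obstacles." First, the inductive step: your Mayer--Vietoris argument needs control of $\mathrm{H}^{k+1}_{(K_\ell\cap K')\times V}(\rcsp\times V,\,\hexpo)$ in \emph{every} degree $k+1\neq n$, and you propose to obtain this by subdividing $K_\ell\cap K_i$ into product-type pieces adapted simultaneously to the two linear frames $A_\ell$ and $A_i$. There is no reason such a subdivision exists: the factors $L_j$ are arbitrary compact subsets of $\mathbb{R}$, and the intersection of two product grids on $S^{n-1}$ taken in different linear coordinates is in general not a finite union of product-type sets; even when it is, the subdivision increases the number of pieces, so the induction on $\ell$ is not well-founded as stated. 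Moreover, even granting that the intersection again falls under the statement, at $k=n-1$ the exact sequence only yields an injection $\mathrm{H}^{n-1}_{K\times V}\hookrightarrow \mathrm{H}^{n}_{(K_\ell\cap K')\times V}$, and you would additionally need injectivity of $\mathrm{H}^{n}_{(K_\ell\cap K')\times V}\to \mathrm{H}^{n}_{K_\ell\times V}\oplus\mathrm{H}^{n}_{K'\times V}$, which you do not address. Finally, the ``strengthening of Proposition \ref{p-lemma} to arbitrary compact factors'' on which your base case rests for $k\leq n-1$ is asserted rather than proved; it is a nontrivial extension of Theorem \ref{martineau} (the relative Leray covering and the contour-homology arguments must be redone), and, as explained next, it is not needed.

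The missing idea is to use the results already established to isolate the single problematic degree $k=n+1$. By Theorem \ref{edge2} together with Proposition \ref{prop:flabby-dim-partial_R}, one has $\mathrm{H}^{k}_{K\times V}(\rcsp\times V,\,\hexpo)=\mathrm{H}^{k-n}_{K}(\rcsp,\,\pi_{V*}\pboexp)$, and the sheaf $\pi_{V*}\pboexp$ has flabby dimension at most $1$; hence the group vanishes for $k<n$ and for $k>n+1$ for an \emph{arbitrary} closed subset $K\subset\partial M$ --- no product structure and no strengthened Martineau theorem are required. For $k= n+1$ and a single product-type piece, Lemma \ref{lemma3.1} (which already allows arbitrary closed factors $K_j\subset\mathbb{C}$ and arbitrary closed $S\subset\rcsptwo$) gives the vanishing after your coordinate change $\Phi_A$. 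In the Mayer--Vietoris sequence the only remaining term is then $\mathrm{H}^{n+2}_{(K_1\cap K_2)\times V}$, which vanishes for any closed intersection by the same flabby-dimension bound, so the intersections never need to be of product type. This is precisely how the paper argues, and it collapses both of the obstacles you anticipate.
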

\begin{proof}
Let $\pi_V: \rcsp \times V \to \rcsp$ be the canonical 
projection.  On account of Theorem $\ref{edge2}$
and Proposition \ref{prop:flabby-dim-partial_R},
we have
\begin{equation}
\mathrm{H}^{k}_{K \times V}(\rcsp \times V,\,\hexpo)
=
\mathrm{H}^{k-n}_{K}(\rcsp,\, \pi_{V*}\pboexp) = 0
\end{equation}
if $k < n$ or $k > n+1$.
Let us show $\mathrm{H}^{n+1}_{K \times V} 
(\rcsp\times V,\,\hexpo) = 0$.
Since each $K_i$ is of product type in $\overline{M}$,
it follows from Lemma \ref{lemma3.1} that we have
\begin{equation}\label{puz}
\mathrm{H}^{k}_{K_{i} \times V} (\rcsp \times V,\,\hexpo)= 0
\quad (k > n)
\end{equation}
for each $i$.
Let us  first show $\mathrm{H}^{n+1}_{(K_{1} \cup K_{2}) \times V} 
(\rcsp \times V,\,\hexpo)=0$
from 
$\mathrm{H}^{n+1}_{K_{1} \times V} (\rcsp \times V,\,\hexpo) = 0$ 
and $\mathrm{H}^{n+1}_{K_{2}\times V} (\rcsp \times V,\,\hexpo) = 0$.
Consider the following long exact sequence of  cohomology groups
\begin{equation}
\begin{aligned}
&\rightarrow \mathrm{H}^{n+1}_{K_{1} \times V} (\rcsp \times V,\,\hexpo) 
\oplus \mathrm{H}^{n+1}_{K_{2} \times V} (\rcsp \times V,\,\hexpo) \\
&\rightarrow \mathrm{H}^{n+1}_{(K_{1} \cup K_{2}) \times V} 
(\rcsp \times V,\,\oexp)
\rightarrow \mathrm{H}^{n+2}_{(K_{1} \cap K_{2})\times V} 
(\rcsp \times V,\,\hexpo)\rightarrow.
\end{aligned}
\end{equation}
Then, by Proposition \ref{prop:flabby-dim-partial_R},
we have
$$
\mathrm{H}^{n+2}_{(K_{1} \cap K_{2}) \times V}\, 
(\rcsp \times V,\,\hexpo)
=
\mathrm{H}^{2}_{K_{1} \cap K_{2}}\, 
(\rcsp,\, \pi_{V*}\pboexp) = 0.
$$
Hence $\mathrm{H}^{n+1}_{(K_{1} \cup K_{2})\times V} 
(\rcsp\times V,\,\hexpo) = 0$
follows from the above long exact sequence.
By repeatedly applying the same argument to the sets $K_{3},\,K_{4},\,\dots,\,K_{\ell}$, 
we finally obtain
\begin{equation}
\mathrm{H}^{n+1}_{K \times V} (\rcsp \times V,\,\hexpo)=
\mathrm{H}^{n+1}_{(\bigcup_{i} K_i) \times V } 
(\rcsp \times V,\,\hexpo)=0.
\end{equation}
This completes the proof.
\end{proof}
As a corollary, we can obtain a global version of Theorem \ref{edge2}.
\begin{corollary}\label{boundary}
Let $V$ be a Stein open subset in $T$. Then
we have
\begin{equation}\label{partial}
\mathrm{H}^{k}_{\partial N} (\rcsp\times V,\,\hexpo) = 0\qquad (k \ne n).
\end{equation}
\end{corollary}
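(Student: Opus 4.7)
The plan is to apply Proposition~\ref{cover} with $K = \partial M$ after exhibiting $\partial M$ as a finite union of closed subsets of product type. Since $\partial N \cap (\rcsp \times V) = \partial M \times V$, the claim amounts to showing
$$
\mathrm{H}^{k}_{\partial M \times V}(\rcsp \times V,\, \hexpo) = 0 \qquad (k \ne n),
$$
which is exactly the conclusion of Proposition~\ref{cover} once we write $\partial M$ as such a union.

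First I would construct, for each point $p \in \partial M = S^{n-1}$, a compact subset $K_p \subset \partial M$ of product type (in the sense of Definition~\ref{phi}) that contains an open neighborhood of $p$ in $\partial M$. Concretely, I pick a real linear isomorphism $A_p$ on $E$ (i.e.\ one preserving $M$) that sends the unit direction associated with $p$ to $(1, 0, \dots, 0)$. Then the extended homeomorphism $\Phi_{A_p}$, as in the proof of Theorem~\ref{thm-codimentionality}, carries $p$ to $(+\infty) \times (0, \dots, 0)$ and identifies a neighborhood of $p$ in $\overline{M}$ with a neighborhood of $(+\infty) \times (0, \dots, 0)$ in $\overline{\mathbb{R}} \times \mathbb{R}^{n-1}$. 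For sufficiently small $r > 0$, the set
$$
K_p := \Phi_{A_p}^{-1}\bigl(\{+\infty\} \times [-r, r]^{n-1}\bigr)
$$
is then a closed subset of $\partial M$ of product type, and it contains an open neighborhood of $p$ in $\partial M$.

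Next, using compactness of $S^{n-1} = \partial M$, I extract a finite subcover $K_{p_1}, \dots, K_{p_\ell}$ whose union is $\partial M$. This realises $\partial M$ as a finite union of closed product-type subsets, and Proposition~\ref{cover} immediately delivers the desired vanishing.

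The proof is essentially a direct application of Proposition~\ref{cover}; the only genuine step is the product-type covering, and the only point requiring care is checking that the extended homeomorphism $\Phi_{A_p}$ really turns a neighborhood of $p$ in $\partial M$ into a neighborhood of $(+\infty)\times (0,\dots,0)$ inside $\{+\infty\} \times \mathbb{R}^{n-1}$. This follows from the explicit formulas for $\phi$ recalled in the proof of Theorem~\ref{thm-codimentionality}, so no new analytic input is required beyond the results already established in this section.
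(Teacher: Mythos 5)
Your proposal is correct and follows exactly the route the paper intends: the corollary is stated as an immediate consequence of Proposition~\ref{cover}, applied with $K=\partial M$ once $\partial M=S^{n-1}$ is written as a finite union of closed product-type subsets, and your construction of the sets $K_p=\Phi_{A_p}^{-1}(\{+\infty\}\times[-r,r]^{n-1})$ together with compactness of $S^{n-1}$ supplies precisely the covering that the paper leaves implicit (and also uses, in the same form, in the proof of Proposition~\ref{surje}). No gaps.
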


We also obtain the following proposition as a consequence of
Proposition \ref{cover}. 
\begin{proposition}\label{surje}
Let $V$ be a Stein open subset in $T$, and let $\pi_V: \rcsp \times V
\to \rcsp$ denote the canonical projection.
Then, for any  closed subset $K \subset \partial M$,
the restriction morphism
$$
\Gamma\left(\partial M,\,\pi_{V*}\pboexp\right)
\rightarrow 
\Gamma\left(K,\, \pi_{V*}\pboexp\right)
$$ 
is surjective. That is, the sheaf $\pi_{V*}\pboexp$ is soft.
\end{proposition}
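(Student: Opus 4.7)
The aim is to prove softness of $\mathcal{F} := \pi_{V*}\pboexp$ on the compact Hausdorff space $\partial M = S^{n-1}$. For any closed $K \subset \partial M$, set $U := \partial M \setminus K$ and let $j \colon U \hookrightarrow \partial M$, $i \colon K \hookrightarrow \partial M$. The short exact sequence $0 \to j_! j^{-1}\mathcal{F} \to \mathcal{F} \to i_* i^{-1}\mathcal{F} \to 0$ yields the cohomological long exact sequence
\[
\Gamma(\partial M, \mathcal{F}) \to \Gamma(K, \mathcal{F}) \to \mathrm{H}^1(\partial M, j_! j^{-1}\mathcal{F}) \to \mathrm{H}^1(\partial M, \mathcal{F}).
\]
Corollary \ref{boundary} combined with Proposition \ref{prop:flabby-dim-partial_R} already supplies $\mathrm{H}^1(\partial M, \mathcal{F}) = 0$, so the required surjectivity reduces to $\mathrm{H}^1(\partial M, j_! j^{-1}\mathcal{F}) = 0$.

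Compactness of $\partial M$ identifies this group with $\mathrm{H}^1_c(U, \mathcal{F}|_U)$, which by the standard direct limit formula and excision equals
\[
\mathrm{H}^1_c(U, \mathcal{F}|_U) = \varinjlim_{K' \subset U,\ K' \text{ closed in } \partial M} \mathrm{H}^1_{K'}(\partial M, \mathcal{F}).
\]
It suffices to show that in this direct system, those $K'$ which are finite unions of compact product-type subsets (Definition \ref{phi}) disjoint from $K$ form a cofinal family. For such a finite union $K''$, Proposition \ref{cover} yields $\mathrm{H}^{n+1}_{K'' \times V}(\rcsp \times V, \hexpo) = 0$, and the chain of isomorphisms
\[
\mathrm{H}^{n+1}_{K'' \times V}(\rcsp \times V, \hexpo) \simeq \mathrm{H}^1_{K''}(\partial M, \pi_{V*}\pboexp)
\]
--- valid by Theorem \ref{edge2} (pure codimensionality of $\partial N$ relative to $\hexpo$) and Proposition \ref{prop:flabby-dim-partial_R} (vanishing of higher direct images of $\pboexp$ by $\pi_V$) --- gives $\mathrm{H}^1_{K''}(\partial M, \mathcal{F}) = 0$. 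Hence every term of a cofinal subsystem vanishes and the direct limit is zero.

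The main step, and the principal obstacle, is the cofinality claim. Given $K' \subset U$ closed in $\partial M$, normality of compact Hausdorff $\partial M$ provides an open $V'$ with $K' \subset V' \subset \overline{V'} \subset U$. Covering $\partial M$ by the finitely many open sets arising as domains of the charts $\Phi_A$ in Definition \ref{phi} --- for a finite family of linear isomorphisms $A$ whose hyperplanes $\overline{H_A}$ do not collectively cover $\partial M$ --- every point of $\partial M$ admits arbitrarily small compact product-type neighborhoods in at least one such chart. Extracting a finite subcover of the compact $K'$ by product-type neighborhoods each contained in $\overline{V'}$ and taking their union $K''$ produces the required cofinal element, so that $K' \subset K'' \subset \overline{V'} \subset U$. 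The delicate point here is arranging the product-type boxes in each chart to stay within $\overline{V'}$ while still covering $K'$; this is a purely geometric construction on the sphere, using compactness of $K'$ together with the explicit coordinate description of the maps $\Phi_A$.
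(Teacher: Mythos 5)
Your proposal is correct and follows essentially the same route as the paper: both arguments reduce the surjectivity to the vanishing of $\mathrm{H}^{1}_{K''}(\partial M,\,\pi_{V*}\pboexp) \cong \mathrm{H}^{n+1}_{K''\times V}(\rcsp\times V,\,\hexpo)$ for $K''$ a finite union of product-type compacta sitting inside $\partial M\setminus K$, which is exactly Proposition \ref{cover} combined with Theorem \ref{edge2} and Proposition \ref{prop:flabby-dim-partial_R}. Your packaging via $j_!j^{-1}\mathcal{F}$ and $\mathrm{H}^1_c(U,\mathcal{F}) = \varinjlim_{K'}\mathrm{H}^1_{K'}$ is just a reformulation of the paper's step of extending the section to a neighborhood $\Omega\supset K$ and killing the obstruction in $\mathrm{H}^1_{\partial M\setminus\Omega}$, and the geometric covering claim you flag as the delicate point is asserted at the same level of detail in the paper itself.
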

\begin{proof}
Since $\partial{M}$ is a paracompact Hausdorff topological space, we have
\begin{equation}
\Gamma(K,\, \pi_{V*}\pboexp ) =
\underset{\Omega \supset K}{\varinjlim}\,\,
\Gamma(\Omega,\,\pi_{V*}\pboexp),
\end{equation}
where the limit is taken with respect to all open subsets
in $\partial M$ containing $K$.
Therefore every element of 
$\Gamma(K,\, \pi_{V*}\pboexp)$ 
can be first extended to an open neighborhood $\Omega$ of $K $ 
in $\partial M$.
We may assume that $L:=\partial M \setminus \Omega$ 
is a finite union of closed subsets 
$K_{i}\subset \partial M$ where each $K_{i} \subset \partial M$ is of product type in $\overline{M}$.
Consider the  exact sequence of  cohomology groups
\begin{equation}
\Gamma(\partial M,\, \pi_{V*}\pboexp)
\rightarrow
\Gamma(\Omega,\, \pi_{V*}\pboexp) 
\rightarrow
\mathrm{H}^{1}_{L}(\partial M,\, \pi_{V*}\pboexp).
\end{equation}
Then, by Proposition \ref{prop:flabby-dim-partial_R} and Proposition \ref{cover},
we have
$$
\mathrm{H}^{1}_{L}(\partial M,\, \pi_{V*}\pboexp)
=
\mathrm{H}^{n+1}_{L \times V}(\rcsp \times V,\,\hexpo)=0,
$$
from which surjectivity follows.  This completes the proof.
\end{proof}
Recall that  
$j : N \hookrightarrow \overline{N}$ 
denotes the natural embedding.
\begin{theorem}
Let $V$ be a Stein open subset in $T$, and let $\pi_V: \rcsp \times V
\to \rcsp$ denote the canonical projection.
We have
$$
\mathbf{R}\pi_{V*}\boexp \simeq \pi_{V*}\boexp
$$
and, for any closed subset $K \subset \overline{M}$,
the restriction morphism
\begin{equation}{\label{eq:sur_bo}}
\Gamma\left(\overline{M},\, \pi_{V*}\boexp\right) 
\rightarrow 
\Gamma\left(K,\, \pi_{V*}\boexp\right)
\end{equation}
is surjective, 
that is, the sheaf $\pi_{V*}\boexp$ is soft.
\end{theorem}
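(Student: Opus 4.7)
The plan is to deduce both claims from the short exact sequence on $\overline{N}$
\[
0 \longrightarrow i_{\partial N*}\pboexp \longrightarrow \boexp \longrightarrow j_*\mathcal{B}\mathcal{O}_N \longrightarrow 0
\]
(written modulo the orientation sheaf), where $i_{\partial N}: \partial N \hookrightarrow \overline{N}$ is the closed embedding. This is the short exact sequence that already appears implicitly in the extension theorem earlier in this section: the distinguished triangle $\mathbf{R}\Gamma_{\partial N}\mathbf{R}\Gamma_{\overline{N}}(\hexpo) \to \mathbf{R}\Gamma_{\overline{N}}(\hexpo) \to \mathbf{R}j_*j^{-1}\mathbf{R}\Gamma_{\overline{N}}(\hexpo) \xrightarrow{+1}$ collapses to it thanks to Theorems \ref{thm-codimentionality} and \ref{edge2}, which force $\mathbf{R}\Gamma_{\overline{N}}(\hexpo)$ and $\mathbf{R}\Gamma_{\partial N}(\hexpo)$ to be concentrated in degree $n$, together with the equality $\mathbf{R}j_*\mathcal{B}\mathcal{O}_N = j_*\mathcal{B}\mathcal{O}_N$ coming from vanishing of higher cohomology of $\mathcal{B}\mathcal{O}_N$ on product opens $(W\cap M)\times V$ with Stein $V$.

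Next I would apply $\mathbf{R}\pi_{V*}$. Writing $\pi'_V: N \to M$ for the restricted projection, $i_{\partial M}:\partial M\hookrightarrow\overline{M}$ and $j':M\hookrightarrow\overline{M}$ for the obvious embeddings, the identities $\pi_V\circ i_{\partial N} = i_{\partial M}\circ\pi'_V$ and $\pi_V\circ j = j'\circ\pi'_V$, Proposition \ref{prop:flabby-dim-partial_R} for the left term, and $\mathbf{R}\pi'_{V*}\mathcal{B}\mathcal{O}_N = \pi'_{V*}\mathcal{B}\mathcal{O}_N$ together with $\mathbf{R}j'_*(\pi'_{V*}\mathcal{B}\mathcal{O}_N) = j'_*(\pi'_{V*}\mathcal{B}\mathcal{O}_N)$ (both valid since $\pi'_{V*}\mathcal{B}\mathcal{O}_N$ is classically soft on $M$) for the right term, show that both $\mathbf{R}\pi_{V*}$ of the end terms are concentrated in degree $0$, equal respectively to $i_{\partial M*}\pi'_{V*}\pboexp$ and $j'_*\pi'_{V*}\mathcal{B}\mathcal{O}_N$. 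The long exact sequence for $\mathbf{R}^\bullet\pi_{V*}$ then forces $\mathbf{R}^k\pi_{V*}\boexp = 0$ for $k\ne 0$, establishing the first claim, and collapses to the short exact sequence on $\overline{M}$
\[
0 \to i_{\partial M*}\pi'_{V*}\pboexp \to \pi_{V*}\boexp \to j'_*\pi'_{V*}\mathcal{B}\mathcal{O}_N \to 0.
\]

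For the softness claim, it suffices to check that both end terms of this last sequence are soft on the paracompact Hausdorff space $\overline{M}$, since softness of the ends forces softness of the middle in any such short exact sequence. The left end is soft because $\pi'_{V*}\pboexp$ is soft on $\partial M$ by Proposition \ref{surje} and closed-embedding push-forward preserves softness. For the right end, $\pi'_{V*}\mathcal{B}\mathcal{O}_N$ is classically soft on $M$ (hyperfunctions with holomorphic parameters over a Stein base), and a section of $j'_*\pi'_{V*}\mathcal{B}\mathcal{O}_N$ over a closed $K\subset\overline{M}$ is by definition a germ, i.e.~a section on $\Omega\cap M$ for some open $\Omega\supset K$ in $\overline{M}$; since $M\setminus\Omega$ is closed in $M$, the vanishing $\mathrm{H}^1_{M\setminus\Omega}(M, \pi'_{V*}\mathcal{B}\mathcal{O}_N) = 0$ from softness allows this section to be extended to a global section on $M$, proving softness of $j'_*\pi'_{V*}\mathcal{B}\mathcal{O}_N$. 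The main obstacle is the clean collapse of the distinguished triangle into the short exact sequence on $\overline{N}$, which rests on $\mathbf{R}j_*\mathcal{B}\mathcal{O}_N = j_*\mathcal{B}\mathcal{O}_N$; everything else essentially reduces to carefully tracking compatibilities and invoking the standard softness properties of $\pi'_{V*}\mathcal{B}\mathcal{O}_N$.
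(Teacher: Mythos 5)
Your proposal is correct and follows essentially the same route as the paper: both collapse the distinguished triangle relating $\pboexp$, $\boexp$ and $\mathbf{R}j_{*}\mathcal{B}\mathcal{O}_{N}$ into a short exact sequence, push it forward by $\mathbf{R}\pi_{V*}$ using Proposition \ref{prop:flabby-dim-partial_R} for the boundary term and the classical acyclicity/softness of hyperfunctions with Stein holomorphic parameters for the interior term, and then deduce softness of the middle sheaf from softness of the two ends. The only difference is that you spell out several steps the paper leaves implicit (the identification $\mathbf{R}j_{*}\mathcal{B}\mathcal{O}_{N}=j_{*}\mathcal{B}\mathcal{O}_{N}$ and the softness of $(\pi_{V}\circ j)_{*}\mathcal{B}\mathcal{O}_{N}$), which is a matter of detail rather than of method.
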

\begin{proof}
We have the distinguished triangle
\begin{equation}
\mathbf{R}\pi_{V*}\pboexp \to
\mathbf{R}\pi_{V*}\boexp \to
\mathbf{R}\pi_{V*}\mathbf{R}j_{*}\mathcal{B}\mathcal{O}_N \overset{+1}{\rightarrow}.
\end{equation}
It follows from Proposition \ref{prop:flabby-dim-partial_R} and 
$\mathbf{R}\pi_{V*}\mathbf{R}j_{*}\mathcal{B}\mathcal{O}_N =
(\pi_{V}\circ j)_{*}\mathcal{B}\mathcal{O}_N$ that the complex
$\mathbf{R}\pi_{V*}\boexp$ is concentrated in degree $0$ and we have the exact
sequence:
\begin{equation}{\label{eq:exact-3-sheaves}}
0 \to \pi_{V*}\pboexp \to \pi_{V*}\boexp \to
(\pi_{V} \circ j)_{*}\mathcal{B}\mathcal{O}_N \to 0.
\end{equation}
Then, since
$\pi_{V*}\pboexp$ and 
$(\pi_{V} \circ j)_{*}\mathcal{B}\mathcal{O}_N$ are soft, 
the $\pi_{V*}\boexp$ becomes soft.
This completes the proof.
\end{proof}
\begin{remark}
{\normalfont
Surjectivity of \eqref{eq:sur_bo} is equivalent to that of the restriction morphism
$$
\Gamma\left(\overline{M} \times V,\, \boexp\right) 
\rightarrow 
\underset{\Omega \supset\supset K}{\varinjlim}\,\,\Gamma\left(\Omega \times V,\, \boexp\right),
$$
where $\Omega$ ranges through open subsets in $\rcsp$ containing $K$.
}
\end{remark}
\begin{corollary}
The sheaf $\bexpm$ of Laplace hyperfunctions on $\overline{M}$ is soft.
\end{corollary}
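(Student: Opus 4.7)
The plan is to deduce the corollary as the immediate specialization of the preceding theorem to the parameter-free setting, i.e., the case $m = 0$ that is already allowed by the standing convention $T = \mathbb{C}^{m}$ with $m \ge 0$.

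When $m = 0$ the space $T$ reduces to a single point. Then the partial radial compactification $\hat{X} = \rcsp \times T$ collapses to $\rcsp$, the closed subset $\overline{N} = \overline{M}\times T$ collapses to $\overline{M}$, and the sheaves simplify by definition: $\mathcal{O}^{\exp}_{\hat{X}} = \mathcal{O}^{\exp}_{\rcsp}$ and $\omega_{\overline{N}} = \omega_{\overline{M}}$. Consequently
\begin{equation}
\boexp = \mathscr{H}^{n}_{\overline{N}}(\hexpo)\underset{\mathbb{Z}_{\overline{N}}}{\otimes}\omega_{\overline{N}}
\simeq
\mathscr{H}^{n}_{\overline{M}}(\mathcal{O}^{\exp}_{\rcsp})\underset{\mathbb{Z}_{\overline{M}}}{\otimes}\omega_{\overline{M}} = \bexpm
\end{equation}
as sheaves on $\overline{M}$.

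Next I would choose $V = T$, which is now a single point and therefore trivially a Stein open subset of $T$. The associated projection $\pi_{V} : \rcsp \times V \to \rcsp$ is a homeomorphism, so $\pi_{V*}\boexp \simeq \boexp \simeq \bexpm$. The preceding theorem then applies and tells us that $\pi_{V*}\boexp$ is soft, and this property is transported along the above isomorphism to $\bexpm$.

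There is essentially no obstacle beyond checking that each ingredient of the construction behaves well under the specialization $m = 0$, and all of these checks are straightforward consequences of the definitions reviewed in Sections $2$ and $4$. In particular, the exact sequence \eqref{eq:exact-3-sheaves} used in the previous theorem becomes the exact sequence $0 \to \mathscr{H}^{n}_{\partial M}(\mathcal{O}^{\exp}_{\rcsp})\otimes\omega_{\overline{M}} \to \bexpm \to j_{*}\mathcal{B}_{M} \to 0$ in which the outer terms are soft, so no genuinely new argument is required.
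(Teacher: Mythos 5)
Your proposal is correct and matches the paper's intended (unwritten) argument: the corollary is stated without proof precisely because it is the immediate specialization $m=0$, $V=T=\{\mathrm{pt}\}$ of the preceding theorem, under which $\pi_{V*}\boexp$ reduces to $\bexpm$. All the reductions you check ($T$ a point, $\hat{X}=\rcsp$, $\overline{N}=\overline{M}$, $\pi_V$ the identity) are exactly what is needed and are indeed routine.
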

By \eqref{eq:exact-3-sheaves} and softness of the sheaf 
$\pi_{V*}\pboexp$,
we also have the following theorem.
\begin{theorem}
For any open subset $\Omega \subset \overline{M}$ and
a Stein open subset $V$ in $T$, 
the restriction morphism
$\boexp(\Omega \times V) \rightarrow 
\mathcal{B}\mathcal{O}_{N}( (\Omega \times V) \cap N )$ 
is surjective.
\end{theorem}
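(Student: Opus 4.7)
The plan is to deduce the statement from the short exact sequence \eqref{eq:exact-3-sheaves} of sheaves on $\overline{M}$,
\begin{equation*}
0 \to \pi_{V*}\pboexp \to \pi_{V*}\boexp \to (\pi_V\circ j)_*\mathcal{B}\mathcal{O}_N \to 0,
\end{equation*}
combined with softness of $\pi_{V*}\pboexp$ established just above.

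First I would take sections over the open subset $\Omega \subset \overline{M}$ to obtain the long exact sequence
\begin{equation*}
\cdots \to \Gamma(\Omega,\, \pi_{V*}\boexp) \to \Gamma(\Omega,\, (\pi_V\circ j)_*\mathcal{B}\mathcal{O}_N) \to \mathrm{H}^{1}(\Omega,\, \pi_{V*}\pboexp) \to \cdots
\end{equation*}
Next, by the definition of the push-forward, I would identify
\begin{equation*}
\Gamma(\Omega,\, \pi_{V*}\boexp) = \boexp(\Omega\times V), \qquad
\Gamma(\Omega,\, (\pi_V\circ j)_*\mathcal{B}\mathcal{O}_N) = \mathcal{B}\mathcal{O}_N((\Omega\times V)\cap N),
\end{equation*}
so that the restriction morphism in the statement is exactly the middle arrow in the displayed sequence. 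Hence the problem reduces to showing $\mathrm{H}^{1}(\Omega,\, \pi_{V*}\pboexp) = 0$.

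For this, I would invoke the softness of $\pi_{V*}\pboexp$ proved in the previous theorem (whose support lies in $\partial M$, where softness is established by Proposition \ref{surje}). Since $\overline{M}$ is a paracompact Hausdorff space (a closed ball topologically) and softness restricts to open subsets of such spaces, $\pi_{V*}\pboexp\big|_\Omega$ is soft, hence $\Gamma$-acyclic on $\Omega$, giving the required vanishing and thus the surjectivity.

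There is essentially no genuine obstacle: the argument is pure diagram chasing on the long exact sequence together with soft-acyclicity. The only point that merits care is verifying that softness restricts to the open subset $\Omega$, which is a standard fact on paracompact Hausdorff spaces; this already tacitly underlies the preceding statements of this section, so no new input is needed.
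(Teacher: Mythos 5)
Your argument is correct and is exactly the paper's (one-line) proof spelled out: the paper also deduces the theorem from the exact sequence \eqref{eq:exact-3-sheaves} together with softness of $\pi_{V*}\pboexp$, the point being that $\mathrm{H}^{1}(\Omega,\,\pi_{V*}\pboexp)=0$ since a soft sheaf on the metrizable space $\overline{M}$ restricts to a soft, hence $\Gamma$-acyclic, sheaf on the open subset $\Omega$. No further comment is needed.
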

\begin{corollary}
For any open subset $\Omega \subset \overline{M}$,
the restriction morphism
$\bexpm(\Omega) \rightarrow 
\mathcal{B}_{M}(\Omega \cap M)$ 
is surjective.
\end{corollary}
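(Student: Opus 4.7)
The plan is to obtain this corollary as an immediate specialization of the preceding theorem, by taking the parameter dimension $m=0$ so that $T=\mathbb{C}^{0}$ is a single point. In that case $T$ itself is trivially a Stein open subset of itself, and we may take $V=T$ in the previous theorem. Under this specialization, $\hat{X}=\rcsp\times T$ becomes $\rcsp$, the closed subset $\overline{N}=\overline{M}\times T$ becomes $\overline{M}$, and $N=M\times T$ becomes $M$.

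From the definitions given at the start of Section~4, we have
\begin{equation*}
\mathcal{B}\mathcal{O}^{\exp}_{\overline{N}}
=\mathscr{H}^{n}_{\overline{N}}(\mathcal{O}^{\exp}_{\hat{X}})\underset{\mathbb{Z}_{\overline{N}}}{\otimes}\omega_{\overline{N}},
\qquad
\mathcal{B}^{\exp}_{\overline{M}}
=\mathscr{H}^{n}_{\overline{M}}(\mathcal{O}^{\exp}_{\rcsp})\underset{\mathbb{Z}_{\overline{M}}}{\otimes}\omega_{\overline{M}},
\end{equation*}
so when $m=0$ the two sheaves coincide under the identification $\overline{N}=\overline{M}$, $\hat{X}=\rcsp$. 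Likewise, $\mathcal{B}\mathcal{O}_{N}$, which restricts to the sheaf of hyperfunctions with holomorphic parameters on $N=M\times T$, reduces in this case to the ordinary hyperfunction sheaf $\mathcal{B}_{M}$ on $M$, and the open set $(\Omega\times V)\cap N$ becomes $\Omega\cap M$.

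Thus the preceding theorem, applied with our open subset $\Omega\subset\overline{M}$ and with $V=T=\{\mathrm{pt}\}$, asserts precisely that
\begin{equation*}
\mathcal{B}^{\exp}_{\overline{M}}(\Omega)\longrightarrow\mathcal{B}_{M}(\Omega\cap M)
\end{equation*}
is surjective, which is the statement of the corollary. There is no genuine obstacle: the only content is the identification of the sheaves in the $m=0$ case, which is a direct reading of the definitions, and the fact that a point is (trivially) Stein so that the hypothesis on $V$ in the preceding theorem is fulfilled.
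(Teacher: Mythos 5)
Your proposal is correct and matches the paper's (implicit) argument exactly: the corollary is obtained from the preceding theorem by specializing to $m=0$, so that $T$ is a point (trivially Stein), under which $\boexp$ reduces to $\bexpm$ and $\mathcal{B}\mathcal{O}_{N}$ to $\mathcal{B}_{M}$. The identifications you spell out are precisely the content the paper leaves to the reader.
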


\end{document}